\numberwithin{equation}{section}
\newtheorem{definition}{Definition}
\newtheorem{lemma}{Lemma}[section]
\newtheorem{remark}{Remark}[section]
\newtheorem{example}{Example}
\newtheorem{theorem}{Theorem}
\newtheorem{assumption}{Assumption}
\begin{document}

\baselineskip 18pt

\newcommand{\E}[1]{\mathbb{E}\left[ #1 \right]}
\newcommand{\Eof}[1]{\mathbb{E}\left[ #1 \right]}
\renewcommand{\H}{\mathbb{H}}
\newcommand{\R}{\mathbb{R}}
\newcommand{\sigl}{\sigma_L}
\newcommand{\BS}{\rm BS}
\newcommand{\p}{\partial}
\renewcommand{\P}{\mathbb{P}}
\newcommand{\var}{{\rm var}}
\newcommand{\cov}{{\rm cov}}
\newcommand{\beaa}{\begin{eqnarray*}}
\newcommand{\eeaa}{\end{eqnarray*}}
\newcommand{\bea}{\begin{eqnarray}}
\newcommand{\eea}{\end{eqnarray}}
\newcommand{\ben}{\begin{enumerate}}
\newcommand{\een}{\end{enumerate}}
\newcommand{\bit}{\begin{itemize}}
\newcommand{\eit}{\end{itemize}}
\newcommand{\inn}[2]{\langle #1, #2\rangle}
\newcommand{\intl}[1]{\langle #1 \rangle}

\newcommand{\bX}{\boldsymbol{X}}
\newcommand{\bx}{\boldsymbol{x}}
\newcommand{\bE}{\mathbf{e}}
\newcommand{\bw}{\mathbf{w}}
\newcommand{\bW}{\boldsymbol{W}}
\newcommand{\bB}{\boldsymbol{B}}
\newcommand{\bZ}{\boldsymbol{Z}}
\newcommand{\bH}{\mathbf{H}}
\newcommand{\bF}{\mathbf{F}}
\newcommand{\bG}{\mathbf{G}}
\newcommand{\bs}{\mathbf{s}}
\newcommand{\bsihat}{\widehat{\bs_i}}

\newcommand{\cL}{\mathcal{L}}

\newcommand{\mt}{\mathbf{t}}
\newcommand{\mS}{\mathbb{S}}

\newcommand{\argmax}{{\rm argmax}}
\newcommand{\argmin}{{\rm argmin}}

\newcommand{\tM}{\widetilde{M}}
\newcommand{\tE}[1]{\tilde{\mathbb{E}}\left[ #1 \right]}
\newcommand{\tEof}[1]{\tilde{\mathbb{E}}\left[ #1 \right]}
\newcommand{\tP}{\tilde{\mathbb{P}}}
\newcommand{\tW}{\tilde{W}}
\newcommand{\tB}{\tilde{B}}
\newcommand{\1}{\mathbf{1}}
\renewcommand{\O}{\mathcal{O}}
\newcommand{\dt}{\Delta t}
\newcommand{\tr}{{\rm tr}}

\newcommand{\Xv}{X^{(v)}}
\newcommand{\Xvs}{X^{(v^*)}}
\newcommand{\Jv}{J^{(v)}}

\newcommand{\cG}{\mathcal{G}}
\newcommand{\cF}{\mathcal{F}}
\newcommand{\cLv}{\mathcal{L}^{(v)}}

\def\theequation{\thesection.\arabic{equation}}
\def\thetheorem{\thesection.\arabic{theorem}}

\renewcommand{\theequation}{\arabic{section}.\arabic{equation}}

\def\cprime{$'$}
\def\blue#1{\textcolor{blue}{#1}}

%
%
%

\begin{titlepage}

\begin{center}
\large \bf Bridge representation and modal-path approximation
\end{center}
\vspace{1cm}

\begin{center}
  Jiro Akahori\footnote{Supported by JSPS KAKENHI Grant Number $23330109$,
$24340022$, $23654056$ and $25285102$, and the project RARE -318984 (an FP7 Marie Curie IRSES).} \\
  Department of Mathematical Sciences, Ritsumeikan University \\
  Noji-higashi 1-1-1, Kusatsu, Shiga 525-8577, Japan \\
  e-mail: \textsf{akahori@se.ritsumei.ac.jp}
\end{center}

\vskip5mm

\begin{center}
  Xiaoming Song \\
  Department of Mathematics, Drexel University \\
  e-mail: \textsf{song@math.drexel.edu}
\end{center}

\vskip5mm

\begin{center}
  Tai-Ho Wang \\
  Department of Mathematics, Baruch College, CUNY \\
  1 Bernard Baruch Way, New York, NY10010 \\
  e-mail: \textsf{tai-ho.wang@baruch.cuny.edu}
\end{center}

\vskip 1cm
\begin{center}
\end{center}
\begin{center}
 {\bf Abstract}
 \end{center}
\vskip 0.2cm
The article shows a bridge representation for the joint density of a system of stochastic processes consisting of a Brownian motion with drift coupled with a correlated fractional Brownian motion with drift. As a result, a small time approximation of the joint density is readily obtained by substituting the conditional expectation under the bridge measure by a single path: the modal-path from the initial point to the terminal point. 

\vskip 5mm

\noindent {\it Keywords}: Asymptotic expansion, Mixed fractional Brownian motion, Bridge representation, Modal-path approximation

\vskip 1cm


\end{titlepage}

%
%



\title[Bridge representation and modal-path approximation]{Bridge representation and modal-path approximation}

\begin{abstract}
The article shows a bridge representation for the joint density of a system of stochastic processes consisting of a Brownian motion with drift coupled with a correlated fractional Brownian motion with drift. As a result, a small time approximation of the joint density is readily obtained by substituting the conditional expectation under the bridge measure by a single path: the modal-path from the initial point to the terminal point. 
\end{abstract}

\author[J. Akahori]{Jiro Akahori}
\author[X. Song]{Xiaoming Song}
\author[T.-H Wang]{Tai-Ho Wang}

\address{Jiro Akahori \newline
Department of Mathematical Sciences \newline
Ritsumeikan University \newline
Noji-higashi 1-1-1, Kusatsu, Shiga 525-8577, Japan
}
\email{akahori@se.ritsumei.ac.jp}
\thanks{The first author is supported by JSPS KAKENHI Grant Number $23330109$,
$24340022$, $23654056$ and $25285102$, and the project RARE -318984 (an FP7 Marie Curie IRSES).}

\address{Xiaoming Song \newline
Department of Mathematics \newline
Drexel University
}
\email{song@math.drexel.edu}

\address{Tai-Ho Wang \newline
Department of Mathematics \newline
Baruch College, The City University of New York \newline
1 Bernard Baruch Way, New York, NY10010
}
\email{tai-ho.wang@baruch.cuny.edu}

\maketitle

%
%

\newcommand{\F}{\mathcal{F}}
\renewcommand{\th}{\tilde{h}}

%
%

\section{Introduction}
Stochastic modeling with long range dependence processes has nowadays become ubiquitous. Applications of such processes range from models for traffic, telecommunication, geophysics to finance. In this regard, among other continuous time processes, fractional Brownian motion is probably the most frequently used  base model for long range dependence due to its Gaussianity and close relationship with the classical Brownian motion.

In the field of quantitative finance, stochastic differential equations driven by fractional Brownian motions with different Hurst exponents are considered in option pricing theory in order to capture certain stylized facts observed in the market. For the model to be free of arbitrage opportunity, the underlying asset itself has to be driven by a Brownian motion, see for instance the discussions in Cheridito \cite{cheridito} and Rogers \cite{rogers-arb}. On the other hand, there are empirical evidences showing that the volatility of logarithmic returns of the underlying exhibits long range dependence, see for example Bollerslev and Mikkelsen \cite{bol-mik} and Granger and Hyng \cite{granger-hyng} for S\&P500 index and Tschernig \cite{tschernig} for foreign exchange rate. Thus, the price dynamic of the underlying is naturally modeled by a stochastic system driven by a mixture of Brownian and fractional Brownian motions. However, as probability density of such a model is concerned, to our knowledge, little is known in determining tractable analytic expressions or asymptotic expansions for the joint density; partly due to the lack of analytic tools from PDE theory.

In this paper, we consider the stochastic system consisting of a Brownian motion with drift coupled with a correlated fractional Brownian motion 
with Hurst parameter $ H \in (0,1) $ with drift. Modulo a Gaussian prefactor, we aim to derive a bridge representation for the joint density, see Theorem \ref{thm:bridge-rep}, which accordingly yields a small time asymptotic of the heat kernel type to the lowest order as shown in Theorem \ref{thm:small-time-exp}. The technique applied in the derivation of the bridge representation and hence the resulting small time asymptotics in our opinion is natural and straightforward since it is in a sense a direct generalization of the procedure in deriving similar representation in one dimensional case.

To obtain the bridge representation for the joint density, we follow the line of thought as in Rogers \cite{rogers} and Wang and Gatheral \cite{hk-prob} which we briefly summarize in the following. A general nondegenerate diffusion is transformed into a Brownian motion with drift by applying the Lamperti transformation. Girsanov's theorem is then applied to define a new equivalent measure so that the resulting process is driftless in the new measure.
Finally, modulo a Gaussian density, the bridge representation for the transition density is obtained by conditioning on the terminal point of Brownian motion, see for example Theorem 2 in \cite{hk-prob}. With this bridge representation, a small time asymptotic expansion of the transition density is readily obtained by expanding the Brownian bridge expectation around a deterministic path, the most-likely-path. See \cite{hk-prob} for more details. We remark that the trick of applying Lamperti transformation to unitize the diffusion coefficient in one dimensional case is generally not applicable in higher dimensions due to geometric obstructions.

Aside from some technical conditions, the technique of applying Girsanov's theorem to de-drift the coupled Brownian and fractional Brownian motions in the new measure is still applicable in our case. However, the integrands required in defining the Radon-Nikodym derivative for the new measure are more involved due to the appearance of the defining kernel of fractional Brownian motion, see \eqref{h-2-1} and \eqref{h-1-2}.

Modal-path approximation of the joint density is thus obtained by evaluating the bridge representation along a single deterministic path: the modal-path connecting the initial point and the terminal point. The rationale is as follows. Since in the new measure the two processes under consideration are respectively standard Brownian and fractional Brownian motions, in small time the densities of the corresponding bridges are peaked around their modes; hence the name modal-path. Moreover, at each point in time, the two processes are jointly Gaussian, therefore the modes are simply given by the expectations. For Brownian bridge, the modal-path is the straight line connecting the initial and terminal points of the bridge. As for fractional Brownian bridge, we use the form of Volterra bridge as in Baudoin and Coutin \cite{baudoin-coutin} to determine the modal-path, which in general is not a straight line. We remark that, as the Hurst exponent $H$ approaches one half, the modal-path gets closer to the straight line connecting the initial and terminal points. However, as $H$ approaches zero, the modal-path travels very quickly to the midpoint, stays around the midpoint till almost to the end, then travels very quickly to the terminal point, which in a sense creates a jump-like behaviour. See Remark \ref{rmk:volterra-bridge} for more details.

It is worth mentioning that recent papers by Baudoin and Ouyang \cite{baudoin-ouyang}, 
Inahama \cite{In1}, \cite{In2} and Yamada \cite{Yam}, studying a heat kernel type expansion for the joint density of solution to SDEs driven by fractional Brownian motions in small time. The driving fractional Brownian motions are assumed all of the same Hurst exponent $ H > 1/2 $ except for Inahama \cite{In1}, where the case $ H \in (1/3,1/2] $
is studied. Thus, it is conceivable that in logarithmic scale the lowest order in the expansion of the probability density is of $t^{2H}$ as $t\to0^+$. 

On the other hand, as closed form expression is concerned, Zeng, Chen, and Yang \cite{zeng-chen-yang} derived the density of a one dimensional Ornstein-Uhlenbeck process driven by fractional Brownian motion in closed form by solving a Fokker-Planck type of equation satisfied by the density function. The density in this case is unsurprisingly Gaussian, see (3.6) in Zeng, Chen, and Yang \cite{zeng-chen-yang}.

The rest of the paper is organized as follows. The main result of bridge representation is proved in Section \ref{sec:main-1}. Section \ref{sec:modal-path-approx} gives the modal-path approximation of the joint density and an error analysis of the approximation. Finally, the paper concludes with specific examples of the modal-path approximations. For reader's convenience, we review basics on fractional Brownian motion, fractional differentiation, and fractional integration in Section \ref{sec:prelim}.

%
%

\section{Model specification and change of probability measures} \label{sec:main-1}

Throughout the text, $B=\{B_t,t\in[0,\infty) \}$ and $W=\{W_t,t\in[0,\infty)\}$ denote independent standard Brownian motions defined on the complete filtered probability space $(\Omega,\cF, \P, \{\cF_t\}_{t\in[0,\infty)})$ satisfying the usual conditions. $B^H=\{B_t^H, t\in[0,\infty)\}$ is a fractional Brownian motion with Hurst exponent $H$ generated by $B$. In this paper we understand $B^H $ as the Volterra-Gaussian 
process given by
\begin{equation*} 
B_t^H=\int_0^t K_H(t,s)\,dB_s,
\end{equation*}
where $ K_H $ is given by
\eqref{K-H} or \eqref{K-H-1} in the Appendix. 

Let $ T >0 $. 
We shall make use of the following notations. Let $C([0,T])$ denote the space of continuous functions defined on $[0,T]$, and $C^\lambda([0,T])$ denote the space of H\"{o}lder continuous functions on $[0,T]$ of order $\lambda\in(0,1)$. The supremum norm and $C^\lambda$ norm are defined respectively as
\[
\Vert f\Vert_{T, \infty }=\sup_{0\leq t\leq T}|f(t)| \quad \mbox{if} \ f\in C([0,T]),
\]%
and%
\[
\left\| f\right\| _{T,\lambda }=\Vert f\Vert_{T,\infty }+\sup_{0\leq s <t\leq T}\frac{%
|f(t)-f(s )|}{|t-s |^{\lambda }}, \quad \mbox{if}\ f\in C^\lambda([0,T]).
\]%

\if0
For simplicity, we will make use of the following notations for the Brownian motions $B$, $W$ and the fractional Brownian motion $B^H$:
\[
\Vert\cdot\Vert_\infty=\Vert \cdot\Vert_{1,\infty}\quad \mbox{and}\quad  \Vert\cdot\Vert_\lambda=\Vert \cdot\Vert_{1,\lambda}.
\]

Let $C_c^\infty(\mathbb{R}^2)$ denote the space of smooth functions defined on $\mathbb{R}^2$ with compact supports.
\fi

The model and its assumptions are specified in Section \ref{sec:model}, followed by the proof of existence and uniqueness of solution and the regularity of sample paths. A change of probability measures is introduced and its validity is proved in Section \ref{sec:change-meas}. The bridge representation is shown in Section \ref{sec:bridge-rep}.

\bigskip
\subsection{The model}$\mbox{ } $ \label{sec:model}

Consider the two dimensional stochastic system
\begin{equation}\label{model}
\begin{cases}
   X_t =x_0+ \rho B_t + \sqrt{1-\rho^2} W_t +\int_0^t h_1(s,X_s,Y_s) ds, \\\\
    Y_t =y_0+ B^H_t + \int_0^th_2(s, X_s,Y_s) ds,
   \end{cases}
\end{equation}
where $(X_0,Y_0)=(x_0,y_0)$ is the initial point, $\rho \in (-1,1)$, and the two functions $h_1, h_2:[0,T]\times\mathbb{R}^2\to\mathbb{R}$ are deterministic. Hence, by construction, $X_t$ is a Brownian motion with drift $h_1$ and $Y_t$ is a fractional Brownian motion of Hurst exponent $H$ with drift $h_2$.

The following assumptions on $h_1$ and $h_2$ guarantee the existence and uniqueness of solution to \eqref{model}.
\begin{assumption}\label{a1} $\mbox{ }$ \hfill
\begin{itemize}
\item[(a)]
 The functions $h_1$ and $h_2$ are Lipschitz in $x,y$ uniformly for $t$. That is, there exists a constant $L>0$ such that
\begin{equation}\label{Lip}
|h_i(t, x_1,y_1)-h_i(t, x_2,y_2)|\leq L(|x_1-x_2|+|y_1-y_2|), \quad i = 1, 2,
\end{equation}
for all $t\in[0,T]$ and $(x_1,y_1),\, (x_2,y_2)\in\mathbb{R}^2$.
\item[(b)] \begin{itemize}
\item[(i)] If $H>\frac12$, there exist two constants $L>0$ and $\gamma\in(H-\frac{1}{2},\frac12)$ such that the function $h_1$ satisfies
\begin{equation*}
|h_1(t,0,0)|\leq L,\ \forall t\in[0,T],
\end{equation*} 
and the function $h_2$ satisfies
\begin{equation}\label{Hold}
|h_2(t, x,y)-h_2(s, x,y)|\leq L|t-s|^\gamma, \ \forall  s, t\in[0,T], \ \forall (x,y)\in\mathbb{R}^2,
\end{equation}
i.e., $h_2$ is H\"{o}lder continuous in $t$ of order $\gamma$ uniformly for $x$ and $y$.

\item[(ii)] If $H\leq \frac12$, there exists a constant $L>0$ such that
\begin{equation*}
|h_i(t,0,0)|\leq L,\ \forall s, t\in[0,T], \ i=1,2.
\end{equation*}
\end{itemize}
\end{itemize}
\end{assumption}

\begin{remark} The conditions  in Assumption \ref{a1} imply that the functions $h_1$ and $h_2$ satisfy the following 
linear growth condition:  there exists a constant $K>0$ such that
\begin{equation}\label{linear}
\vert h_i(t, x,y)\vert \leq K \left( 1 + \vert x \vert + \vert y \vert \right), \quad i = 1, 2,
\end{equation}
for all $t\in[0,T]$ and $(x,y)\in\mathbb{R}^2$.
\end{remark}
Since we consider small time asymptotic in this work, in the sequel, we always assume that $T<1$. We use the conventions of\[
\Vert\cdot\Vert_\infty=\Vert \cdot\Vert_{1,\infty}\quad \mbox{and}\quad  \Vert\cdot\Vert_\lambda=\Vert \cdot\Vert_{1,\lambda}.
\]

The following theorem establishes the existence and uniqueness of the solution to \eqref{model} and the regularity of solution trajectories under Assumption \ref{a1}.
\begin{theorem}\label{holder}
Let the conditions in Assumption \ref{a1} be satisfied. Then, there exists a positive constant $\delta$ such that the system \eqref{model} has a unique solution $(X, Y)$ when $T<\delta$. Moreover, the trajectories of $X$ and $Y$ satisfy $X\in C^{\frac{1}{2}-\epsilon}([0,T])$ and $Y\in C^{H-\epsilon}([0,T])$ almost surely for every $0<\epsilon  <\min\{\frac12, H\}$.
\end{theorem}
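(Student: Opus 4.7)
The plan is to prove existence and uniqueness by a pathwise Banach fixed-point argument applied to the deterministic integral equation obtained after fixing a realization of the noises $B$, $W$, and $B^H$, and then to deduce the stated Hölder regularity by separating the noise contribution from the drift integral.

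First I would fix an $\omega$ in a full measure set on which $B_\cdot(\omega), W_\cdot(\omega) \in C^{1/2 - \epsilon}([0,T])$ and $B^H_\cdot(\omega) \in C^{H - \epsilon}([0,T])$ (these are standard facts recalled later in Section \ref{sec:prelim}). On the Banach space $C([0,T];\mathbb{R}^2)$ endowed with the norm $\|(X,Y)\|_{T,\infty} = \|X\|_{T,\infty} + \|Y\|_{T,\infty}$, define
\begin{eqnarray*}
\Phi(X,Y)(t) = \Bigl(x_0 + \rho B_t + \sqrt{1-\rho^2}\,W_t + \int_0^t h_1(s,X_s,Y_s)\,ds,\; y_0 + B^H_t + \int_0^t h_2(s,X_s,Y_s)\,ds\Bigr).
\end{eqnarray*}
Continuity in $t$ of $\Phi(X,Y)$ is immediate from continuity of the noise and of the integrals (the integrands are bounded on $[0,T]$ thanks to the linear growth bound \eqref{linear} and continuity of $(X,Y)$). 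A solution of \eqref{model} is precisely a fixed point of $\Phi$.

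For the contraction, I would use the Lipschitz condition \eqref{Lip} to estimate
\begin{eqnarray*}
|\Phi_i(X_1,Y_1)(t) - \Phi_i(X_2,Y_2)(t)| \leq L\int_0^t \bigl(|X_1(s)-X_2(s)| + |Y_1(s)-Y_2(s)|\bigr)\,ds \leq LT\,\|(X_1,Y_1)-(X_2,Y_2)\|_{T,\infty}
\end{eqnarray*}
for $i = 1,2$, giving $\|\Phi(X_1,Y_1) - \Phi(X_2,Y_2)\|_{T,\infty} \leq 2LT\,\|(X_1,Y_1)-(X_2,Y_2)\|_{T,\infty}$. Choosing $\delta = 1/(4L)$ (say) makes $\Phi$ a strict contraction whenever $T < \delta$, and the Banach fixed point theorem produces a unique continuous solution. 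Since the argument is pathwise, the measurability of $(X,Y)$ with respect to $\mathcal{F}_t$ follows from the fact that Picard iteration starting at the constant path $(x_0,y_0)$ is measurable at each step.

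For the trajectorial regularity, I would split $X_t = x_0 + \rho B_t + \sqrt{1-\rho^2}W_t + I_1(t)$ with $I_1(t) := \int_0^t h_1(s,X_s,Y_s)\,ds$, and similarly $Y_t = y_0 + B^H_t + I_2(t)$. Because $(X,Y)$ is continuous on the compact interval $[0,T]$, the linear growth bound \eqref{linear} makes the integrands bounded, hence $I_1$ and $I_2$ are Lipschitz in $t$ and in particular lie in $C^{\lambda}([0,T])$ for every $\lambda \in (0,1)$. Adding a Lipschitz function to the two noises preserves their Hölder exponents, yielding $X \in C^{1/2 - \epsilon}([0,T])$ and $Y \in C^{H - \epsilon}([0,T])$ for every $0 < \epsilon < \min\{1/2, H\}$. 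The main conceptual care, and the only place requiring comment, is that the argument is entirely pathwise: no stochastic integral appears in \eqref{model}, so the extra Hölder-in-$t$ hypothesis on $h_2$ in Assumption \ref{a1}(b)(i) is not needed at this stage (it will be needed later, e.g.\ for the Girsanov change of measure involving fractional integration). Nothing genuinely delicate arises here; the essential input is merely Lipschitz continuity of $h_1, h_2$ together with the classical Hölder regularity of $B$, $W$, and $B^H$.
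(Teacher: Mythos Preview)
Your proposal is correct and follows essentially the same route as the paper: a Banach fixed-point argument on $C([0,T];\mathbb{R}^2)$ with sup norm, contraction constant $2LT$, and then the decomposition of each component into noise plus a Lipschitz drift integral to read off the H\"older regularity. The only minor difference is that the paper records explicit quantitative bounds on $\|X\|_{T,\frac12-\epsilon}$ and $\|Y\|_{T,H-\epsilon}$ in terms of $\|B\|_\infty$, $\|W\|_\infty$, $\|B^H\|_\infty$ and the corresponding H\"older norms (via a preliminary Gronwall step), because those inequalities are reused downstream in the Novikov estimate; your qualitative argument suffices for the theorem as stated.
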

\begin{proof}
We use the contraction mapping theorem to prove the existence and uniqueness of the solution. Let $(x^i,y^i), i=1,2$, be two stochastic processes taking values in $C([0,T])$. Define
\begin{equation*}
\begin{cases}
   X_t^i =x_0+ \rho B_t + \sqrt{1-\rho^2} W_t +\int_0^t h_1(s, x^i_s,y^i_s) ds, \\\\
    Y_t^i =y_0+ B^H_t + \int_0^th_2(s,x^i_s,y^i_s) ds,
   \end{cases}
\end{equation*}
for each $i=1,2$. Assumption \ref{a1} implies that
\[
\Vert X^1-X^2\Vert_{T,\infty}+\Vert Y^1-Y^2\Vert_{T,\infty}\leq 2LT\left(\Vert x^1-x^2\Vert_{T,\infty}+\Vert y^1 - y^2 \|_{T,\infty} \right).
\]
We choose $\delta=\frac{1}{2L}$, and hence, by the contraction mapping theorem we obtain the existence and uniqueness of the solution in $C([0,T])$ for any $T<\delta$.

From \eqref{model} and \eqref{linear}, we get
\begin{equation*}
\vert X_t\vert\leq \vert x_0\vert+\rho \Vert B\Vert_\infty+\sqrt{1-\rho^2}\Vert W\Vert_\infty+K\int_0^t(1+\vert X_s\vert+\vert Y_s\vert)ds,
\end{equation*}
and
\begin{equation*}
\vert Y_t\vert\leq \vert y_0\vert+ \Vert B^H\Vert_\infty+K\int_0^t(1+\vert X_s\vert+\vert Y_s\vert)ds.
\end{equation*}
Thus, Gronwall's inequality implies that
\begin{eqnarray}
&& \Vert X\Vert_{T,\infty}+\Vert Y\Vert_{T,\infty} \label{e-2-4} \\
&\leq& \left(\vert x_0\vert+\vert y_0\vert+\rho \Vert B\Vert_\infty+\sqrt{1-\rho^2}\Vert W\Vert_\infty+\Vert B^H\Vert_\infty+2KT\right)e^{2KT}.  \nonumber
\end{eqnarray}
From \eqref{model}, \eqref{linear} and \eqref{e-2-4}, we observe that
\begin{eqnarray*}
&& \vert X_t-X_s\vert\leq \rho\vert B_t-B_s\vert+\sqrt{1-\rho^2}\vert W_t-W_s\vert+\left\vert \int_s^th_1(r, X_r,Y_r)dr\right\vert  \\
&&\leq \rho\vert B_t-B_s\vert+\sqrt{1-\rho^2}\vert W_t-W_s\vert+C\left(1+\Vert B\Vert_\infty+\Vert W\Vert_\infty+\Vert B^H\Vert_\infty\right)|t-s|,
\end{eqnarray*}
where $C$ here and in the sequel denotes a generic constant depending on $\vert x_0\vert$, $\vert y_0\vert$, $\rho$, $L$ and $K$ (the generic constant $C$ will depend on $H$ as well in the proofs of some later lemmas), and $C$ may vary from line to line. 

Then,  for any $\epsilon\in(0,\frac{1}{2})$, the following estimate can be obtained
\begin{equation}\label{Jan-2-5}
\Vert X\Vert_{T,\frac{1}{2}-\epsilon}\leq \rho\Vert B\Vert_{\frac{1}{2}-\epsilon}+\sqrt{1-\rho^2}\Vert W\Vert_{\frac{1}{2}-\epsilon}+C\left(1+\Vert B\Vert_\infty+\Vert W\Vert_\infty+\Vert B^H\Vert_\infty\right).
\end{equation}

Similarly, for any $\epsilon\in(0,H)$, the following estimate holds
\begin{equation}\label{Jan-2-6}
\Vert Y\Vert_{T, H-\epsilon}\leq \Vert B^H\Vert_{H-\epsilon}+C\left(1+\Vert B\Vert_\infty+\Vert W\Vert_\infty+\Vert B^H\Vert_\infty\right).
\end{equation}
The proof is completed.
\end{proof}

\bigskip
\subsection{Change of Measures}$\mbox{ } $ \label{sec:change-meas}

Next, we discuss a change of measures, where under the new measure $ X, Y $ become standard and 
fractional Brownian motions, respectively. 
Heuristically, the new measure $ \tP $
would be defined by
\begin{equation}\label{p-mea0}
   \frac{d\tP}{d\P}  = \exp\left\{-\int_0^{T} \th_1(t) dW_t - \frac12\int_0^{T} \th_1^2(t) dt - \int_0^{T} \th_2(t) dB_t - \frac12 \int_0^{T} \th_2^2(t) dt\right\}, \\
\end{equation}
where $ \th_1 $ and $ \th_2 $ are such
that 
\begin{equation}\label{h-2-1}
\int_0^tK_H(t,s)\tilde{h}_2(s)ds=\int_0^th_2(s, X_s,Y_s)ds,
\end{equation}
and
\begin{equation}\label{h-1-2}
\rho \,\tilde{h}_2(t)+\sqrt{1-\rho^2}\,\tilde{h}_1(t)=h_1(t, X_t,Y_t).
\end{equation}
The well-definedness of $ \th_2 $ in
\eqref{h-2-1} is established in 
Lemma \ref{inverse} in Appendix. 

The following lemma asserts that the two processes $\tilde h_1$ and $\tilde h_2$ satisfy the Novikov's condition.
\begin{lemma} \label{lma:novikov}
There exists a small $t_0\leq T$ such that the adapted processes $\tilde{h}_1$ and $\tilde{h}_2$ satisfy the Novikov's condition in $[0,t_0]$. That is,
\begin{equation}\label{novikov}
\mathbb{E}\left[\exp\left\{\frac{1}{2}\int_0^{t_0}\vert\tilde{h}_1(t)\vert^2dt+\frac{1}{2}\int_0^{t_0}\vert\tilde{h}_2(t)\vert^2dt\right\}\right]
<\infty.
\end{equation}
\end{lemma}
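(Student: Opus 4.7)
The strategy is to bound $|\tilde h_1(t)|$ and $|\tilde h_2(t)|$ pointwise by (deterministic multiples of) norms of the underlying Gaussian processes $B,W,B^H$, and then to close the Novikov estimate by Fernique's theorem on a sufficiently small interval $[0,t_0]$. First, \eqref{h-1-2} gives
\[
\tilde h_1(t) \;=\; \frac{1}{\sqrt{1-\rho^2}}\bigl(h_1(t,X_t,Y_t) - \rho\,\tilde h_2(t)\bigr),
\]
so it suffices to control $\tilde h_2$ and $h_1(t,X_t,Y_t)$. The linear growth bound \eqref{linear} together with the moment estimate \eqref{e-2-4} yields $|h_1(t,X_t,Y_t)| \le C\bigl(1+\|B\|_{t_0,\infty}+\|W\|_{t_0,\infty}+\|B^H\|_{t_0,\infty}\bigr)$.

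Second, I would invoke Lemma \ref{inverse} in the appendix to invert the Volterra equation \eqref{h-2-1} and write $\tilde h_2(t)$ as an explicit operator acting on $g(t):=\int_0^t h_2(s,X_s,Y_s)\,ds$. When $H>1/2$, that inverse is essentially a fractional derivative of order $H-1/2$; the key point is that $g$ is $C^1$ in $t$ with derivative $h_2(\cdot,X_\cdot,Y_\cdot)$, which by Assumption \ref{a1}(b)(i) is H\"older in $t$ of order $\gamma>H-1/2$ (using \eqref{Lip}, \eqref{Hold}, and the path regularity $X\in C^{\frac12-\epsilon}$, $Y\in C^{H-\epsilon}$ from \eqref{Jan-2-5}--\eqref{Jan-2-6}). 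Hence the fractional derivative exists pointwise and admits a bound in terms of $\|B\|_{t_0,\frac12-\epsilon}$, $\|W\|_{t_0,\frac12-\epsilon}$, and $\|B^H\|_{t_0,H-\epsilon}$. For $H\le 1/2$ the inverse is a weighted fractional integral, a smoothing operation, which combined with \eqref{linear} and \eqref{e-2-4} yields a similar bound requiring only sup norms.

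Third, combining these pointwise bounds and integrating over $[0,t_0]$ produces an estimate of the form
\[
\tfrac12\int_0^{t_0}\!\bigl(|\tilde h_1(t)|^2+|\tilde h_2(t)|^2\bigr)\,dt \;\le\; C\, t_0^{\beta}\, \mathcal N^2,
\]
for some $\beta>0$ and a random variable $\mathcal N$ that depends linearly on $\|B\|_{t_0,\infty}$, $\|W\|_{t_0,\infty}$, $\|B^H\|_{t_0,\infty}$, and the relevant H\"older seminorms on $[0,t_0]$. Since $\mathcal N$ is the norm of a Gaussian element in a Banach space, Fernique's theorem guarantees $\mathbb{E}[\exp(a\mathcal N^2)]<\infty$ for some $a>0$. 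Choosing $t_0>0$ small enough that $C\,t_0^{\beta}\le a$ then gives \eqref{novikov}.

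The main obstacle is the case $H>1/2$: the representation of $\tilde h_2(t)$ through the inverse kernel has a singularity as $t\to 0^+$, and one must use the H\"older-in-$t$ control \eqref{Hold}, the Lipschitz condition \eqref{Lip}, and the sharp path regularity of $Y$ in tandem, to show that the singular pieces of $\tilde h_2$ integrate to a quantity with a positive power of $t_0$. This is precisely why the condition $\gamma>H-\tfrac12$ is imposed in Assumption \ref{a1}(b)(i), and it is also what makes the Fernique threshold attainable by shrinking $t_0$.
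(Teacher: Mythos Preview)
Your proposal is correct and follows essentially the same route as the paper: split into the cases $H\le\tfrac12$ (where $\tilde h_2$ is given by the smoothing fractional-integral formula \eqref{equ-1-5} and only sup norms are needed) and $H>\tfrac12$ (where $\tilde h_2$ is given by the Weyl-derivative formula \eqref{equ-1-3}, split into two pieces, and controlled via \eqref{Hold}, \eqref{Lip} and the H\"older bounds \eqref{Jan-2-5}--\eqref{Jan-2-6}), then integrate to extract a positive power of $t_0$ and close with Fernique's theorem. The only cosmetic difference is that the paper works with the fixed norms $\|\cdot\|_{1,\infty}$ and $\|\cdot\|_{1,\lambda}$ rather than $\|\cdot\|_{t_0,\infty}$, which makes the Fernique constant manifestly independent of $t_0$.
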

\begin{proof} $\mbox{ }$ \\
\underline{Case of $H\leq\frac{1}{2}$}:  From \eqref{equ-1-5}, \eqref{e-2-4} and the linear growth property \eqref{linear} of $h_2$, we get
\begin{eqnarray}\label{bound-h-2}
\vert \tilde{h}_2(t)\vert&=&c_H^{-1} t^{H-\frac{1}{2}}\left\vert\int_0^t(t-s)^{-\frac{1}{2}-H}s^{\frac{1}{2}-H}h_2(s, X_s,Y_s)ds\right\vert\nonumber\\
&\leq & C\left(1+\Vert B\Vert_\infty+\Vert W\Vert_\infty+\Vert B^H\Vert_\infty\right)t^{H-\frac{1}{2}}\int_0^t(t-s)^{-\frac{1}{2}-H}s^{\frac{1}{2}-H}ds\nonumber\\
&=&CB\left(\frac{1}{2}-H,\frac{3}{2}-H\right)\left(1+\Vert B\Vert_\infty+\Vert W\Vert_\infty+\Vert B^H\Vert_\infty\right)t^{\frac{1}{2}-H}\nonumber\\
&\leq& C \left(1+\Vert B\Vert_\infty+\Vert W\Vert_\infty+\Vert B^H\Vert_\infty\right),
\end{eqnarray}
where $B(\cdot,\cdot)$ is the Beta function.

From \eqref{h-1-2}, \eqref{bound-h-2}, the linear growth condition \eqref{linear} on $h_1$ and \eqref{e-2-4}, we obtain
\begin{equation*}
\vert \tilde{h}_1(t)\vert\leq C \left(1+\Vert B\Vert_\infty+\Vert W\Vert_\infty+\Vert B^H\Vert_\infty\right).
\end{equation*}
Thus, we obtain
\begin{equation}\label{Jan-2-13}
\int_0^t\vert(\tilde{h}_1(s)\vert^2+\vert\tilde{h}_2(s)\vert^2)ds\leq C\left(1+\Vert B\Vert_\infty^2+\Vert W\Vert_\infty^2+\Vert B^H\Vert_\infty^2\right)t.
\end{equation}
Therefore, the estimate \eqref{Jan-2-13} and the Fernique's theorem (see \cite{Fe}) imply \eqref{novikov} for a small enough $t_0$.

\underline{Case of $H>\frac{1}{2}$}: From \eqref{equ-1-3}, we can see that
\begin{equation}\label{Jan-2-14}
\tilde{h}_2(t)=\frac{c_H^{-1}}{\Gamma\left(\frac{3}{2}-H\right)}\left(a(t)+b(t)\right),
\end{equation}
where
\[
a(t)=t^{\frac{1}{2}-H}h_2(t, X_t,Y_t)+\left(H-\frac{1}{2}\right)\int_0^t\frac{h_2(t, X_t,Y_t)-h_2(s, X_s,Y_s)}{(t-s)^{H+\frac{1}{2}}}ds,
\]
and
\[
b(t)=\left(H-\frac{1}{2}\right)t^{H-\frac{1}{2}}\int_0^t\frac{(t^{\frac{1}{2}-H}-s^{\frac{1}{2}-H})h_2(s,X_s,Y_s)}{(t-s)^{H+\frac{1}{2}}}ds.
\]

For any small positive number $\epsilon<1-H<\frac{1}{2}$, from the conditions \eqref{Lip}, \eqref{Hold} and \eqref{linear} on $h_2$, \eqref{e-2-4}, \eqref{Jan-2-5} and \eqref{Jan-2-6} it follows that
\begin{align}\label{Jan-2-15}
|a(t)|&\leq C\left(1+\Vert B\Vert_\infty+\Vert W\Vert_\infty+\Vert B^H\Vert_\infty\right)t^{\frac{1}{2}-H}+C\int_0^t(t-s)^{\gamma-H-\frac12}ds\nonumber\\
&\quad +C\left(1+\Vert B\Vert_\infty+\Vert W\Vert_\infty+\Vert B^H\Vert_\infty+\Vert B\Vert_{\frac{1}{2}-\epsilon}+\Vert W\Vert_{\frac{1}{2}-\epsilon}+\Vert B^H\Vert_{H-\epsilon}\right)\nonumber\\
&\qquad\times\int_0^t(t-s)^{-H-\epsilon}ds\nonumber\\
&\leq C\left(1+\Vert B\Vert_\infty+\Vert W\Vert_\infty+\Vert B^H\Vert_\infty\right)t^{\frac{1}{2}-H}+Ct^{\gamma-H+\frac12}\nonumber\\
&\quad +C\left(1+\Vert B\Vert_\infty+\Vert W\Vert_\infty+\Vert B^H\Vert_\infty+\Vert B\Vert_{\frac{1}{2}-\epsilon}+\Vert W\Vert_{\frac{1}{2}-\epsilon}+\Vert B^H\Vert_{H-\epsilon}\right)t^{1-H-\epsilon}\nonumber\\
&\leq C\left(1+\Vert B\Vert_\infty+\Vert W\Vert_\infty+\Vert B^H\Vert_\infty\right)t^{\frac{1}{2}-H}\nonumber\\
&\quad +C\left(1+\Vert B\Vert_\infty+\Vert W\Vert_\infty+\Vert B^H\Vert_\infty+\Vert B\Vert_{\frac{1}{2}-\epsilon}+\Vert W\Vert_{\frac{1}{2}-\epsilon}+\Vert B^H\Vert_{H-\epsilon}\right).
\end{align}
For the term $b(t)$, we shall apply the fact that the integral $\int_0^1\frac{u^{\frac{1}{2}-H}-1}{(1-u)^{H+\frac{1}{2}}}du=\alpha_H$ is a finite number depending only on $H$. Using the linear growth condition \eqref{linear} on $h_2$, \eqref{e-2-4} and the change of variable $u = \frac st$, we obtain
\begin{align}\label{Jan-2-16}
|b(t)|&\leq C\left(1+\Vert B\Vert_\infty+\Vert W\Vert_\infty+\Vert B^H\Vert_\infty\right)t^{\frac{1}{2}-H}\int_0^1\frac{u^{\frac{1}{2}-H}-1}{(1-u)^{H+\frac{1}{2}}}du\nonumber\\
&= C\alpha_H\left(1+\Vert B\Vert_\infty+\Vert W\Vert_\infty+\Vert B^H\Vert_\infty\right)t^{\frac{1}{2}-H}.
\end{align}
From \eqref{Jan-2-14} - \eqref{Jan-2-16}, we can show that
\begin{align}\label{Jan-2-17}
|\tilde{h}_2(t)|&\leq C\left(1+\Vert B\Vert_\infty+\Vert W\Vert_\infty+\Vert B^H\Vert_\infty\right)t^{\frac{1}{2}-H}\nonumber\\
&\quad +C\left(1+\Vert B\Vert_\infty+\Vert W\Vert_\infty+\Vert B^H\Vert_\infty+\Vert B\Vert_{\frac{1}{2}-\epsilon}+\Vert W\Vert_{\frac{1}{2}-\epsilon}+\Vert B^H\Vert_{H-\epsilon}\right).
\end{align}
From \eqref{h-1-2}, \eqref{Jan-2-17}, the linear growth condition \eqref{linear} on $h_1$ and \eqref{e-2-4}, we obtain
\begin{align}\label{bound-h-1}
\vert \tilde{h}_1(t)\vert&\leq C\left(1+\Vert B\Vert_\infty+\Vert W\Vert_\infty+\Vert B^H\Vert_\infty\right)t^{\frac{1}{2}-H}\nonumber\\
&\quad +C\left(1+\Vert B\Vert_\infty+\Vert W\Vert_\infty+\Vert B^H\Vert_\infty+\Vert B\Vert_{\frac{1}{2}-\epsilon}+\Vert W\Vert_{\frac{1}{2}-\epsilon}+\Vert B^H\Vert_{H-\epsilon}\right).
\end{align}
Then, it is easy to see that
\begin{eqnarray}\label{ss-2-19}
&&\int_0^t\vert(\tilde{h}_1(s)\vert^2+\vert\tilde{h}_2(s)\vert^2)ds\nonumber\\
&\leq& C\left(1+\Vert B\Vert_\infty^2+\Vert W\Vert_\infty^2+\Vert B^H\Vert_\infty^2\right)t^{2-2H}\nonumber\\
&&+C\left(1+\Vert B\Vert_\infty^2+\Vert W\Vert_\infty^2+\Vert B^H\Vert_\infty^2+\Vert B\Vert_{\frac{1}{2}-\epsilon}^2+\Vert W\Vert_{\frac{1}{2}-\epsilon}^2+\Vert B^H\Vert_{H-\epsilon}^2\right)t.\nonumber\\
\end{eqnarray}
Therefore, from the estimate \eqref{ss-2-19} and the Fernique's theorem (see \cite{Fe}), we conclude that there exists a small enough $t_0$ such that \eqref{novikov} holds.
\end{proof}

We let $T\leq t_0$ and restrict ourselves to the small interval $[0,T]$ hereafter for convenience. By Lemma \ref{lma:novikov}, we define the equivalent probability measure $\tP$ via the Radon-Nikodym derivative by \eqref{p-mea0}. 
Thus, by applying Girsanov's theorem, we obtain that in the new probability measure $\tP$, $W$ and $B$ become two Brownian motions with drifts determined by $\tilde h_1$ and $\tilde h_2$ respectively. We summarize the result in the following lemma.
\begin{lemma}\label{Girsanov}
Under the probability measure $\tP$, the processes $\tilde{W}=\{\tilde{W}_t=W_t+\int_0^t\tilde{h}_1(s)ds,\,t\in[0,T]\}$ and $\tilde{B}=\{\tilde{B}_t=B_t+\int_0^t\tilde{h}_2(s)ds,\,t\in[0,T]\}$ become two independent Brownian motions, and the process $\tilde{B}^H=\{\tilde{B}^H_t=B^H_t+\int_0^tK_H(t,s)\tilde{h}_2(s)ds,\,t\in[0,T]\}$ becomes a fractional Brownian motion. Henceforth, in the $\tP$-measure $W$ and $B$ become two Brownian motions with drifts respectively.
\end{lemma}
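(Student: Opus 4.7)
The plan is to apply the two-dimensional Girsanov theorem to the pair $(B, W)$ and then transfer the result to the fractional process via its Volterra representation, so no new analytic work beyond Lemma~\ref{lma:novikov} is required.

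First, I would invoke Lemma~\ref{lma:novikov}, which ensures that the exponential process appearing in~\eqref{p-mea0} is a true $\P$-martingale on $[0,T]$ for $T\le t_0$. Consequently $\tP$ defined by~\eqref{p-mea0} is a bona fide probability measure equivalent to $\P$ on $\cF_T$. Applying the two-dimensional Girsanov theorem (with identity diffusion matrix and drift vector $(\tilde h_2, \tilde h_1)$) to the independent pair $(B, W)$, the processes $\tilde B_t = B_t + \int_0^t \tilde h_2(s)\,ds$ and $\tilde W_t = W_t + \int_0^t \tilde h_1(s)\,ds$ become, under $\tP$, continuous local martingales with respect to $\{\cF_t\}$ whose quadratic (co)variations coincide with those of $(B,W)$ under $\P$. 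Hence by L\'evy's characterization they are standard Brownian motions, and vanishing quadratic covariation $[\tilde B,\tilde W]\equiv 0$ gives independence.

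Next I would define $\tilde B^H_t := \int_0^t K_H(t,s)\,d\tilde B_s$ and use linearity of the Wiener integral, together with \eqref{h-2-1}, to obtain the decomposition
\[
\tilde B^H_t \;=\; \int_0^t K_H(t,s)\,dB_s \;+\; \int_0^t K_H(t,s)\,\tilde h_2(s)\,ds \;=\; B^H_t + \int_0^t K_H(t,s)\,\tilde h_2(s)\,ds,
\]
which matches the claimed formula. Since $\tilde B$ is a standard Brownian motion under $\tP$ and $K_H$ is the same Volterra kernel used in the construction of $B^H$, the resulting $\tilde B^H$ is automatically a fractional Brownian motion of Hurst index $H$ under $\tP$ by the very definition of fractional Brownian motion adopted in the paper.

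The main obstacle, already dispatched by Lemma~\ref{lma:novikov}, is the verification of Novikov's condition for $\tilde h_1$ and $\tilde h_2$; apart from that, the argument is a direct assembly of classical Girsanov together with the Volterra representation of $B^H$. The only remaining bookkeeping is to confirm adaptedness and progressive measurability of $\tilde h_1, \tilde h_2$, which follow from the well-definedness in Lemma~\ref{inverse}, the algebraic relation~\eqref{h-1-2}, and the sample-path regularity of $(X,Y)$ established in Theorem~\ref{holder}.
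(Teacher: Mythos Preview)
Your proposal is correct and follows exactly the approach the paper takes: the paper does not give a separate proof of this lemma but simply states it as a direct consequence of Girsanov's theorem (justified by Lemma~\ref{lma:novikov}) together with the Volterra representation of $B^H$. Your write-up merely spells out these standard details.
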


\subsection{Bridge representation of the joint density under $\P$} $\mbox{ } $ \label{sec:bridge-rep}

The purpose of this section is to show the bridge representation \eqref{eqn:bridge-rep} of the joint density of $(X_T, Y_T)$.

\begin{theorem}(Bridge representation of the joint density) \label{thm:bridge-rep} \\
Let $X$ and $Y$ respectively be Brownian and fractional Brownian motions with drift satisfying \eqref{model} and initial condition $(x_0,y_0)$. The joint density $p_T(x,y|x_0,y_0)$ of $(X_T, Y_T)$ at time $T$ has the bridge representation
\begin{eqnarray}\label{eqn:bridge-rep}
&& p_T(x,y|x_0,y_0)\nonumber\\ 
&=& \phi(x - x_0,y - y_0) \tilde{\mathbb{E}}_{x,y} \left[e^{\int_0^{T} \th_1(t) d\tilde{W}_t - \frac12\int_0^{T} \th_1^2(t) dt + \int_0^{T} \th_2(t) d\tilde{B}_t - \frac12 \int_0^{T} \th_2^2(t) dt}\right],
\end{eqnarray}
where $\tilde{\mathbb{E}}_{x,y}[\cdot]$ denotes the conditional expectation with respect to the probability measure $\tP$ under which $X$ and $Y$ are standard Brownian and fractional Brownian bridges respectively conditioned on the terminal point $(X_T, Y_T) = (x, y)$,
$\phi$ is the bivariate Gaussian density
\bea
&& \phi(\xi,\eta) = \frac1{2\pi T^{H + \frac12} \sqrt{1 - \rho^2 \kappa_H^2}} \times \label{eqn:gaussian-density} \\
&& \exp\left\{-\frac1{2(1-\rho^2\kappa_H^2)}\left[\left(\frac{\xi}{\sqrt T}\right)^2 - 2 \rho\kappa_H \left(\frac{\xi}{\sqrt T}\right)\left(\frac{\eta}{T^H}\right) + \left(\frac{\eta}{T^H}\right)^2 \right]\right\}. \nonumber
\eea
and the processes $\tilde h_1$ and $\tilde h_2$ are determined by \eqref{h-2-1} and \eqref{h-1-2}. The constant $\kappa_H$ is defined by 
\begin{equation} \label{eqn:kappaH}
\kappa_H = c_H \,\frac{B\left(\frac32-H,H+\frac12\right)}{H+\frac12},
\end{equation}
where $c_H$ is the constant appearing in \eqref{K-H}.
\end{theorem}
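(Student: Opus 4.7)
The plan is to follow the Girsanov-plus-conditioning strategy outlined in the Introduction, adapted to the fractional setting. First, I invoke Lemma~\ref{Girsanov} to work under $\tP$; under this measure
\[
X_t = x_0 + \rho\, \tB_t + \sqrt{1-\rho^2}\, \tW_t, \qquad Y_t = y_0 + \tB^H_t,
\]
so $X$ is a standard Brownian motion and $Y$ a standard fractional Brownian motion on $[0, T]$, both drift-free. This reduces the problem to one about a jointly Gaussian process whose bridge structure is tractable.

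Next, for any bounded measurable $f : \R^2 \to \R$, I write $\E{f(X_T, Y_T)} = \tE{f(X_T, Y_T)\, M_T}$, where $M_T := d\P/d\tP$. Inverting the Radon--Nikodym derivative in~\eqref{p-mea0} and substituting $dW_t = d\tW_t - \th_1(t)\,dt$ and $dB_t = d\tB_t - \th_2(t)\,dt$ into the two It\^o integrals, the quadratic--variation terms rearrange and cancel to leave $M_T$ equal to the exponential
\[
M_T = \exp\!\left\{\int_0^T \th_1\, d\tW_t - \tfrac12\!\int_0^T \th_1^2\, dt + \int_0^T \th_2\, d\tB_t - \tfrac12\!\int_0^T \th_2^2\, dt\right\}
\]
appearing in~\eqref{eqn:bridge-rep}. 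Applying the tower property to condition on $(X_T, Y_T) = (x, y)$ then yields
\[
\E{f(X_T, Y_T)} = \int_{\R^2} f(x, y)\, \tilde{\mathbb{E}}_{x,y}[M_T]\, q_T(x, y)\, dx\, dy,
\]
where $q_T$ is the joint density of $(X_T, Y_T)$ under $\tP$. Since under $\tP$ the pair $(X, Y)$ is a driftless (Brownian, fractional Brownian) system, the regular conditional law given $(X_T, Y_T) = (x, y)$ is precisely the joint Brownian / fractional Brownian bridge (in the Volterra sense of~\cite{baudoin-coutin}) pinned at $(x, y)$; identifying densities delivers $p_T(x, y \mid x_0, y_0) = q_T(x, y)\, \tilde{\mathbb{E}}_{x,y}[M_T]$.

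All that remains is to verify $q_T(x, y) = \phi(x - x_0, y - y_0)$. Under $\tP$ the vector $(X_T - x_0, Y_T - y_0)$ is centered bivariate Gaussian with marginal variances $T$ and $T^{2H}$, and, by It\^o's isometry together with the independence of $\tW$ and $\tB$,
\[
\cov(X_T, Y_T) = \rho\, \tE{\tB_T \tB^H_T} = \rho\! \int_0^T K_H(T, s)\, ds.
\]
Plugging the Volterra form of $K_H$ from the Appendix into this integral and invoking Fubini collapses it to a Beta integral evaluating to $T^{H + \frac12} \kappa_H$, with $\kappa_H$ as in~\eqref{eqn:kappaH}; feeding the resulting covariance into the standard bivariate Gaussian density then produces~\eqref{eqn:gaussian-density} verbatim. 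The only genuine technicality is this covariance computation, which has to be carried out separately for $H > \tfrac12$ and $H \leq \tfrac12$ according to the two expressions of $K_H$ in the Appendix, but is routine in either case.
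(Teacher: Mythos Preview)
Your proposal is correct and follows essentially the same route as the paper: de-drift via Girsanov (Lemma~\ref{Girsanov}), invert the Radon--Nikodym derivative to express $d\P/d\tP$ in terms of $(\tW,\tB)$, condition on $(X_T,Y_T)$ against a test function, and identify the Gaussian prefactor by computing the covariance $\rho\int_0^T K_H(T,s)\,ds$. One small remark: the covariance integral need not be split by cases on $H$; the paper uses the unified expression~\eqref{K-H-1} for $K_H$, and after Fubini both pieces reduce to the same Beta integral, yielding $\kappa_H T^{H+\frac12}$ directly (this is Lemma~\ref{int}).
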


To start with, we determine the dynamic of $(X_t,Y_t)$ in the $\tP$-measure. Recall from \eqref{h-2-1} and \eqref{h-1-2} that we can rewrite the processes $X$ and $Y$ as
\begin{eqnarray}\label{til-X}
  X_t &=& x_0+ \rho B_t + \sqrt{1-\rho^2}W_t + \int_0^th_1(s, X_s,Y_s) ds \nonumber\\
  &=& x_0+ \rho \tB_t + \sqrt{1-\rho^2}\tW_t +\int_0^t \left[ h_1(s, X_s,Y_s) - \rho\th_2(s) - \sqrt{1-\rho^2}\,\th_1(s) \right] ds\nonumber\\
  &=& x_0+ \rho \tB_t + \sqrt{1-\rho^2}\tW_t
\end{eqnarray}and
\begin{eqnarray}\label{til-Y}
  Y_t &=& y_0 + B^H_t + \int_0^t h_2(s, X_s,Y_s) ds \nonumber\\
  &=& y_0 + \int_0^t K_H(t,s)dB_s + \int_0^t h_2(s, X_s,Y_s) ds\nonumber \\
  &=& y_0 + \int_0^t K_H(t,s)d\tB_s - \int_0^t K_H(t,s) \th_2(s) ds + \int_0^t h_2(s, X_s,Y_s) ds\nonumber \\
  &=& y_0 + \tB^H_t.
\end{eqnarray}
Thus, under the probability measure $\tP$, the process $X$ is a Brownian motion and $Y$ is a fractional Brownian motion. In particular, $(X_t, Y_t)$ is jointly Gaussian in $\tP$.
The following lemma is required in determining the covariance matrix of $(X_t, Y_t)$ in $\tP$.
\begin{lemma} \label{int}
The integral $\int_0^tK_H(t,u)du$ is given explicitly as
\begin{equation}\label{int-KH}
\int_0^tK_H(t,u)du
=\kappa_Ht^{H+\frac12},
\end{equation}
where the constant $\kappa_H$ is given in \eqref{eqn:kappaH}. 
\end{lemma}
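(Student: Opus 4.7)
The plan is to compute $\int_0^t K_H(t,u)\,du$ directly by substituting the explicit formula for $K_H$ recalled from the appendix and then swapping the order of integration. For $H>\tfrac12$ the kernel takes the form
\begin{equation*}
K_H(t,s) = c_H\,(H-\tfrac12)\, s^{\frac12-H}\int_s^t u^{H-\frac12}(u-s)^{H-\frac32}\,du,
\end{equation*}
so that
\begin{equation*}
\int_0^t K_H(t,s)\,ds = c_H(H-\tfrac12)\int_0^t\!\!\int_s^t s^{\frac12-H} u^{H-\frac12}(u-s)^{H-\frac32}\,du\,ds.
\end{equation*}
Fubini's theorem (the integrand is positive) converts this into $\int_0^t u^{H-\frac12}\bigl[\int_0^u s^{\frac12-H}(u-s)^{H-\frac32}\,ds\bigr]\,du$.

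The inner integral is evaluated by the change of variable $s=ur$, which produces a factor $u^{(\frac12-H)+(H-\frac32)+1}=u^0=1$ and leaves the Beta integral $\int_0^1 r^{\frac12-H}(1-r)^{H-\frac32}\,dr = B(\tfrac32-H,\,H-\tfrac12)$, convergent exactly because $\tfrac12<H<\tfrac32$. Hence
\begin{equation*}
\int_0^t K_H(t,s)\,ds = c_H(H-\tfrac12)\,B(\tfrac32-H,\,H-\tfrac12)\int_0^t u^{H-\frac12}\,du = c_H(H-\tfrac12)\,B(\tfrac32-H,\,H-\tfrac12)\,\frac{t^{H+\frac12}}{H+\tfrac12}.
\end{equation*}
A routine $\Gamma$-identity, $\Gamma(H+\tfrac12)=(H-\tfrac12)\Gamma(H-\tfrac12)$, gives $(H-\tfrac12)B(\tfrac32-H,\,H-\tfrac12)=B(\tfrac32-H,\,H+\tfrac12)$, so the right-hand side collapses to $\kappa_H t^{H+\frac12}$ with $\kappa_H$ as in \eqref{eqn:kappaH}.

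For $H\le\tfrac12$, I would use the alternative representation \eqref{K-H-1} of the kernel from the appendix. The same Fubini-then-substitute-$s=ur$ strategy reduces the computation to a Beta function; the two pieces of the kernel (the boundary term $(t/s)^{H-\frac12}(t-s)^{H-\frac12}$ and the integral remainder) may need to be handled separately, but after combining them the $H$-dependent constants reorganize into the same Beta-function expression, giving the identical formula $\kappa_H t^{H+\frac12}$ by analytic continuation in $H$. The main (mild) obstacle is keeping track of the two different forms of $K_H$ across the two regimes and justifying the Fubini swap together with the convergence of the Beta integral at the endpoints; both are guaranteed by the constraint $H\in(0,1)$, so no further integrability hypothesis is needed.
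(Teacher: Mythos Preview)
Your computation for $H>\tfrac12$ is correct, but it relies on the kernel representation $K_H(t,s)=c_H(H-\tfrac12)s^{\frac12-H}\int_s^t u^{H-\frac12}(u-s)^{H-\frac32}\,du$, which is \emph{not} one of the two forms recorded in the appendix (\eqref{K-H} and \eqref{K-H-1}); it is equivalent to \eqref{K-H-1} via an integration by parts, but you would need to say so. More importantly, your treatment of $H\le\tfrac12$ is only a sketch: you wave at \eqref{K-H-1}, say the pieces ``reorganize'' and invoke analytic continuation in $H$, without actually doing the computation. Analytic continuation is not needed here and does not by itself justify the identity for $H\le\tfrac12$.

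The paper's proof avoids the case split entirely. It uses the representation \eqref{K-H-1}, which is valid for \emph{all} $H\in(0,1)$, and integrates the two pieces directly: the substitution $s=tr$ gives $\int_0^t (t/s)^{H-\frac12}(t-s)^{H-\frac12}\,ds = t^{H+\frac12}B(\tfrac32-H,H+\tfrac12)$, and Fubini followed by $s=ur$ gives $\int_0^t s^{\frac12-H}\int_s^t u^{H-\frac32}(u-s)^{H-\frac12}\,du\,ds = \frac{t^{H+\frac12}}{H+\frac12}B(\tfrac32-H,H+\tfrac12)$. Combining these with the coefficient $-(H-\tfrac12)$ yields $c_H\,B(\tfrac32-H,H+\tfrac12)\,t^{H+\frac12}/(H+\tfrac12)=\kappa_H t^{H+\frac12}$ in one stroke. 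Note that both Beta integrals here have the second argument $H+\tfrac12>0$, so convergence holds on all of $(0,1)$; your inner Beta integral $B(\tfrac32-H,H-\tfrac12)$ requires $H>\tfrac12$, which is precisely why you were forced into a case split.
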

\begin{proof} 
Recall the function $K_H(t,s)$ in \eqref{K-H-1}:
\beaa
  K_H(t,s) &=&c_H \left[ \left(\frac ts \right)^{H-\frac12} (t - s)^{H-\frac12}
  - (H-\frac12) s^{\frac12-H} \int_s^t u^{H-\frac32} (u-s)^{H-\frac12} du \right].
\eeaa

By changing of variables $s=tr$, one can calculate
\beaa
  && \int_0^t  \left(\frac ts \right)^{H-\frac12} (t - s)^{H-\frac12}ds =t^{H+\frac12}\int_0^1r^{\frac12-H}(1-r)^{H-\frac12}dr=  t^{H+\frac12}B\left(\frac32-H,H+\frac12\right).
  \eeaa
By changing the order of the integrals and changing of variables $s=ur$, we have
\begin{eqnarray*}
&& \int_0^t s^{\frac12-H} \int_s^t u^{H-\frac32} (u-s)^{H-\frac12} du \,ds = \int_0^t\int_0^us^{\frac12-H}u^{H-\frac32} (u-s)^{H-\frac12}ds\,du \\
&=&\int_0^t u^{H-\frac12}\int_0^1r^{\frac12-H}(1-r)^{H+\frac12}dr\,du
= \frac{t^{H+\frac12}B\left(\frac32-H,H+\frac12\right)}{H+\frac12}.
\end{eqnarray*}
Thus, we obtain \eqref{int-KH}.
\end{proof}
Now we are in position to complete the proof of bridge representation for the joint density $p_T$.
\begin{proof}({\bf Proof of Theorem \ref{thm:bridge-rep}}) \\
Let $\tilde{\mathbb{E}}$ denote the expectation with respect to the probability $\tP$. Since $X_t$ and $Y_t$ are Brownian and fractional Brownian motions in $\tP$ respectively, we have
\[
\tE{X_t}=x_0,\quad \tE{Y_t}=y_0, \; \forall t \in [0,T],
\]
and, for $s, t \in [0,T]$,
\[
\tE{(X_s-x_0)(X_t-x_0)}=s\wedge t,\quad \tE{(Y_s-y_0)(Y_t-y_0)} = R_H(s,t),
\]
where $R_H$ is the autocovariance function for fractional Brownian motion as given in \eqref{covariance}.
The covariance between $X_t$ and $Y_t$ is determined by applying It\^{o} isometry as
\begin{eqnarray}
\tE{(X_t-x_0)(Y_s-y_0)}&=& \tE{\left(\rho \tB_t + \sqrt{1 - \rho^2} \tW_t\right) \int_0^s K_H(s,u) d\tB_u}\nonumber \\
  &=& \rho \int_0^{s\wedge t} K_H(s,u) du,  \ \forall s, t\in[0,T].
\end{eqnarray}
Thus, the joint density $\tilde p_T$ of the bivariate Gaussian variable $(X_T, Y_T)$ in $\tP$ is given by
\beaa
&& \tilde p_T(x,y|x_0, y_0) = \frac1{2\pi\sqrt{|\mathbf{\Sigma}(T)|}} e^{-\frac12\bx'\mathbf{\Sigma}(T)^{-1}\bx}  \\
&=& \frac1{2\pi T^{H + \frac12} \sqrt{1 - \rho^2 \kappa_H^2}} \times \\
&& \exp\left\{-\frac1{2(1-\rho^2\kappa_H^2)}\left[\left(\frac{x - x_0}{\sqrt T}\right)^2 - 2 \rho\kappa_H \left(\frac{x - x_0}{\sqrt T}\right)\left(\frac{y - y_0}{T^H}\right) + \left(\frac{y - y_0}{T^H}\right)^2 \right]\right\}\\
&=&\phi(x-x_0,y-y_0),
\eeaa
where $\mathbf{\Sigma}(T)$ denotes the covariance matrix of $(X_T, Y_T)$ given by
\begin{equation*}
\mathbf{\Sigma}(T)=\left(\begin{array}{cc}T&\rho\int_0^TK_H(T,u)du\\ \rho\int_0^TK_H(T,u)du &R_H(T,T)=T^{2H}\end{array}\right)=\left(\begin{array}{cc}T&\rho\kappa_HT^{H+\frac12}\\ \rho\kappa_HT^{H+\frac12} &T^{2H}\end{array}\right).
\end{equation*}

Recall from \eqref{p-mea0} we have
\begin{eqnarray*}
 \frac{d\P}{d\tP}&=& \exp\left\{\int_0^{T} \th_1(t) dW_t + \frac12\int_0^{T} \th_1^2(t) dt + \int_0^{T} \th_2(t) dB_t + \frac12 \int_0^{T} \th_2^2(t) dt\right\} \\
 &=&\exp\left\{\int_0^{T} \th_1(t) d\tilde{W}_t - \frac12\int_0^{T} \th_1^2(t) dt + \int_0^{T} \th_2(t) d\tilde{B}_t - \frac12 \int_0^{T} \th_2^2(t) dt\right\}.
\end{eqnarray*}
Hence, for any bounded and continuous function $f$ defined on $\mathbb{R}^2$, we have
\beaa
  && \int p_T(x,y|x_0, y_0) f(x,y) dx dy \\
  &=& \E{f(X_T,Y_T)} = \tE{f(X_T,Y_T) \frac{d\P}{d\tP}} \\
  &=& \tE{f(X_T,Y_T) e^{\int_0^{T} \th_1(t) d\tilde{W}_t - \frac12\int_0^{T} \th_1^2(t) dt + \int_0^{T} \th_2(t) d\tilde{B}_t - \frac12 \int_0^{T} \th_2^2(t) dt}} \\
  &=& \int \tilde{p}_T(x,y|x_0, y_0) f(x,y) \tilde{\mathbb{E}}_{x,y} \left[e^{\int_0^{T} \th_1(t) d\tilde{W}_t - \frac12\int_0^{T} \th_1^2(t) dt + \int_0^{T} \th_2(t) d\tilde{B}_t - \frac12 \int_0^{T} \th_2^2(t) dt}\right] dx dy,
\eeaa
where $\tilde{\mathbb{E}}_{x,y}$ is the conditional expectation conditioned on the terminal point $(X_T, Y_T) = (x, y)$.

 Finally, since $f$ is arbitrary, we obtain the bridge representation
\beaa
 p_T(x,y|x_0,y_0) &=& \tilde{p}_T(x,y|x_0,y_0) \tilde{\mathbb{E}}_{x,y} \left[e^{\int_0^{T} \th_1(t) d\tilde{W}_t - \frac12\int_0^{T} \th_1^2(t) dt + \int_0^{T} \th_2(t) d\tilde{B}_t - \frac12 \int_0^{T} \th_2^2(t) dt}\right]\\
&=&\phi(x-x_0,y-y_0)\tilde{\mathbb{E}}_{x,y} \left[e^{\int_0^{T} \th_1(t) d\tilde{W}_t - \frac12\int_0^{T} \th_1^2(t) dt + \int_0^{T} \th_2(t) d\tilde{B}_t - \frac12 \int_0^{T} \th_2^2(t) dt}\right].
\eeaa
The proof is completed.
\end{proof}

%
%

\section{Modal-path approximation} \label{sec:modal-path-approx}
The bridge representation \eqref{eqn:bridge-rep} of the joint density given in Theorem \ref{thm:bridge-rep} albeit succinct is hard to calculate in practice; owing to the complexity in defining the processes $\tilde h_1$, $\tilde h_2$ and the involvement of the stochastic integrals with respect to Brownian motions in the new measure $\tP$. In this section, we approximate the following conditional expectation under bridge measure in Theorem \ref{thm:bridge-rep}
\begin{equation}\label{condi-expo}
 \tilde{\mathbb{E}}_{x,y} \left[e^{\int_0^{T} \th_1(t) d\tilde{W}_t - \frac12\int_0^{T} \th_1^2(t) dt + \int_0^{T} \th_2(t) d\tilde{B}_t - \frac12 \int_0^{T} \th_2^2(t) dt}\right]
\end{equation}
by evaluating the integrand along the modal-path, thus the term ``modal-path approximation", and provide an error estimate of the modal-path approximation. The idea is to replace the processes $X$ and $Y$ in the bridge representation by their expectations in the new measure $\tP$. Since $X$ and $Y$ are Brownian and fractional Brownian bridges respectively in the $\tP$-measure, the expectations consist of the mode of the joint density of $X_t$ and $Y_t$ that are easily obtained as in Remark \ref{rmk:volterra-bridge}. We summarize the result in Theorem \ref{thm:small-time-exp}.

\subsection{The law of $(X_t, Y_t)$ conditioned on its terminal point in the $\tP$-measure} $\mbox{}$
Let us characterize the law of $(X_t, Y_t)$ conditioned on its terminal point in the $\tP$-measure in this subsection.
\begin{lemma}
Define the processes $(X_t^{x,y}, Y_t^{x,y})$ by
\begin{equation} \label{eqn:XY-bridge}
\left[\begin{array}{c} X_t^{x,y}\\ Y_t^{x,y}\end{array}\right] =
\left[\begin{array}{c} X_t\\ Y_t\end{array} \right] + \mathbf{\Sigma}(t;T) \mathbf{\Sigma}(T)^{-1}\left[\begin{array}{c} x-X_T\\ y-Y_T\end{array}\right],
\end{equation}
where the matrices $\mathbf{\Sigma}(t;T)$ and $\mathbf{\Sigma}(T)$ are given by
\begin{equation}\label{A_t}
\mathbf{\Sigma}(T) =\left[\begin{array}{cc} Cov(X_T,X_T)&Cov(X_T,Y_T)\\Cov(X_T,Y_T)&Cov(Y_T,Y_T) \end{array}\right]= \left[\begin{array}{cc} T & \rho_H T^{H + \frac12} \\
\rho_H T^{H + \frac12} & T^{2H}
\end{array}\right]
\end{equation}
and
\begin{equation}\label{B_t}
\mathbf{\Sigma}(t;T) =\left[\begin{array}{cc} Cov(X_t,X_T)&Cov(X_t,Y_T)\\Cov(Y_t,X_T)&Cov(Y_t,Y_T) \end{array}\right]= \left[\begin{array}{cc}
t & \rho\int_0^tK_H(T,u)du \\ \rho_H t^{H + \frac12} & R_H(t,T)
\end{array}\right].
\end{equation}
Note that $R_H(t,T)$ is the autocovariance function of fractional Brownian motion defined in \eqref{covariance}.  
Then, the joint process $(X_t^{x,y}, Y_t^{x,y})$ under the probability measure $\tilde{\mathbb{P}}$ has the same distribution as $(X_t, Y_t)$ under the conditional probability measure $\tP_T^{x,y} =  \tP\left[\cdot|X_T = x, Y_T = y\right]$.
\end{lemma}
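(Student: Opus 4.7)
The plan is to observe that under $\tilde{\mathbb{P}}$, by Lemma \ref{Girsanov}, the pair $(X_t - x_0, Y_t - y_0)_{t\in[0,T]}$ is a centered, jointly Gaussian process -- $X$ becomes a Brownian motion and $Y$ a fractional Brownian motion, both driven by the same underlying $\tilde{B}$ as in \eqref{til-X}--\eqref{til-Y}. Consequently $(X_t^{x,y}, Y_t^{x,y})$, being an affine function of the Gaussian vector $(X_t, Y_t, X_T, Y_T)$, is itself Gaussian under $\tilde{\mathbb{P}}$; and $(X_t, Y_t)$ under $\tilde{\mathbb{P}}_T^{x,y}$ is Gaussian by the classical Gaussian-conditioning theorem. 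It therefore suffices to match the mean functions and covariance kernels of the two processes at arbitrary finite collections of times.

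For the means, taking expectation in \eqref{eqn:XY-bridge} and using $\tilde{\mathbb{E}}[X_T] = x_0$, $\tilde{\mathbb{E}}[Y_T] = y_0$ yields
\[
\tilde{\mathbb{E}}\bigl[(X_t^{x,y}, Y_t^{x,y})^\top\bigr]
= (x_0, y_0)^\top + \mathbf{\Sigma}(t;T)\, \mathbf{\Sigma}(T)^{-1}\,(x - x_0,\, y - y_0)^\top,
\]
which is exactly the conditional mean $\tilde{\mathbb{E}}[(X_t, Y_t)^\top \mid X_T = x, Y_T = y]$ produced by the standard Gaussian regression formula applied to the centered vector $(X_t - x_0,\, Y_t - y_0,\, X_T - x_0,\, Y_T - y_0)$.

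For the covariances, I would compute, for arbitrary $s,t\in[0,T]$, the cross-covariance of $(X_t^{x,y}, Y_t^{x,y})^\top$ and $(X_s^{x,y}, Y_s^{x,y})^\top$ directly from \eqref{eqn:XY-bridge}. Writing $A_u = \mathbf{\Sigma}(u;T)\mathbf{\Sigma}(T)^{-1}$, bilinearity together with the identity $A_t\,\mathbf{\Sigma}(T)\,A_s^\top = \mathbf{\Sigma}(t;T)\,\mathbf{\Sigma}(T)^{-1}\,\mathbf{\Sigma}(T;s)$ forces the three off-diagonal correction terms to reduce to a single copy, leaving
\[
\mathbf{\Sigma}(t;s) - \mathbf{\Sigma}(t;T)\,\mathbf{\Sigma}(T)^{-1}\,\mathbf{\Sigma}(T;s),
\]
which is precisely the Schur-complement expression for the conditional covariance of $(X_t, Y_t)$ and $(X_s, Y_s)$ given $(X_T, Y_T)$. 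The entries of $\mathbf{\Sigma}(t;T)$ themselves are read off by It\^o isometry from \eqref{til-X}--\eqref{til-Y}; for example $\cov(Y_t, X_T) = \rho\int_0^t K_H(t,u)\,du = \rho\kappa_H\,t^{H+\frac12}$ by Lemma \ref{int}, while $\cov(X_t, Y_T) = \rho\int_0^t K_H(T,u)\,du$ need not simplify further.

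Matching means and covariances identifies the finite-dimensional laws of the two Gaussian processes; combined with the sample-path continuity inherited from Theorem \ref{holder}, this gives equality in law on $C([0,T];\mathbb{R}^2)$. The only real bookkeeping hurdle is the asymmetry of $\mathbf{\Sigma}(t;T)$: since $\cov(X_t, Y_T) \neq \cov(Y_t, X_T)$ in general, one must carefully distinguish $\mathbf{\Sigma}(t;T)$ from its transpose $\mathbf{\Sigma}(T;t) = \mathbf{\Sigma}(t;T)^\top$ throughout the matrix algebra. Conceptually, however, the statement is nothing more than process-wise Gaussian conditioning.
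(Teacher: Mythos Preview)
Your proposal is correct and follows exactly the paper's approach: the paper's proof is the single line ``Straightforward application of Lemma \ref{conditional} in Appendix,'' and your argument is precisely a careful unpacking of that Gaussian-conditioning lemma at the process level, matching conditional means and Schur-complement covariances across finite-dimensional marginals. Your remark about the asymmetry of $\mathbf{\Sigma}(t;T)$ is a useful caveat that the paper glosses over, but conceptually there is no daylight between the two arguments.
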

\begin{proof}
Straightforward application of Lemma \ref{conditional} in Appendix.
\end{proof}

For notational simplicity, we denote $x_t^{x,y}=\tE{X_s^{x,y}}$ and $y_t^{x,y}=\tE{Y_s^{x,y}}$, and we use the following notations hereafter:
\beaa
\bar\rho = \sqrt{1 - \rho^2}, \quad \rho_H = \rho \kappa_H, \quad \bar\rho_H = \sqrt{1 - \rho_H^2}.
\eeaa
 By applying \eqref{eqn:XY-bridge} and straightforward calculations, we obtain the explicitly expression for the modal-path as
\bea
&&x_t^{x,y}= \tEof{X_t^{x,y}} = x_0 + m_{11}(t;T)(x - x_0) + m_{12}(t;T)(y - y_0), \label{eqn:EXtxy} \\
&& y_t^{x,y}=\tEof{Y_t^{x,y}} = y_0 + m_{21}(t;T)(x - x_0) + m_{22}(t;T)(y - y_0), \label{eqn:EYtxy}
\eea
where
\bea
&& m_{11}(t; T) = \frac1{\bar\rho_H^2}\left(\frac tT - \frac{\rho\rho_H}{T^{H + \frac12}} \int_0^tK_H(T,s)ds \right),\label{m11} \\
&& m_{12}(t;T) = \frac1{\bar\rho_H^2}\left(-\rho_H \frac{t}{T^{H+\frac12}} + \frac{\rho}{T^{2H}} \int_0^tK_H(T,s)ds \right),\label{m12} \\
&& m_{21}(t;T) = \frac{\rho_H}{\bar\rho_H^2}\left( \frac{t^{H + \frac12}}T - \frac{R_H(t,T)}{T^{H+\frac12}} \right),\label{m21} \\
&& m_{22}(t;T) = \frac1{\bar\rho_H^2}\left(-\rho_H^2 \left\{\frac tT\right\}^{H + \frac12} + \frac{R_H(t,T)}{T^{2H}}  \right).\label{m22}
\eea
See Figures \ref{fig:modal-path1} and \ref{fig:modal-path2} for plots of modal-paths in various cases. We remark that, if $H = \frac12$, then $m_{12}(t;T) = m_{21}(t;T) = 0$ and $m_{11}(t;T) = m_{22}(t;T) = \frac tT$. Therefore, \eqref{eqn:EXtxy} and \eqref{eqn:EYtxy} reduce to
\beaa
&& \tEof{X_t^{x,y}} = x_0 + \frac tT(x - x_0), \\
&& \tEof{Y_t^{x,y}} = y_0 + \frac tT(y - y_0),
\eeaa
which is simply the straight line connecting $(x_0, y_0)$ and $(x,y)$ as expected even though $X_t^{x,y}$ and $Y_t^{x,y}$ are correlated. On the other hand, if $H \neq \frac12$ but $\rho = 0$, then $\rho_H = 0$ and $\bar\rho_H = 1$. It follows that
\beaa
&& \tEof{X_t^{x,y}} = x_0 + \frac tT(x - x_0), \\
&& \tEof{Y_t^{x,y}} = y_0 + \frac{R_H(t,T)}{T^{2H}}(y - y_0).
\eeaa
In either case, there are no interactions between $x_t^{x,y}$ and $y_t^{x,y}$. 
\begin{remark} \label{rmk:volterra-bridge}
We present plots of modal-paths with various Hurst exponents $H$ and correlation coefficients $\rho$ in Figures \ref{fig:modal-path1} and \ref{fig:modal-path2}. Terminal time is set as $T=1$, initial and terminal points are chosen as $(x_0, y_0)=(0,0)$ and $(x_T, y_T)=(1,1)$ respectively. As one can see in the plots, when the driving Brownian motions are positively correlated ($\rho \geq 0$), the smaller the Hurst exponent, the curvier the modal-path. When $H$ is close to zero, we observe a jump-like behaviour in the modal-path for all the $\rho$'s. When $H \approx \frac12$, the modal-paths all look like straight lines independent of the values of $\rho$. The negatively correlated case ($\rho < 0$) behaves much more differently than the positive cases when $H$ is away from one half. 
\begin{figure}[ht!]
\begin{center}
\includegraphics[width=7cm, height=8cm]{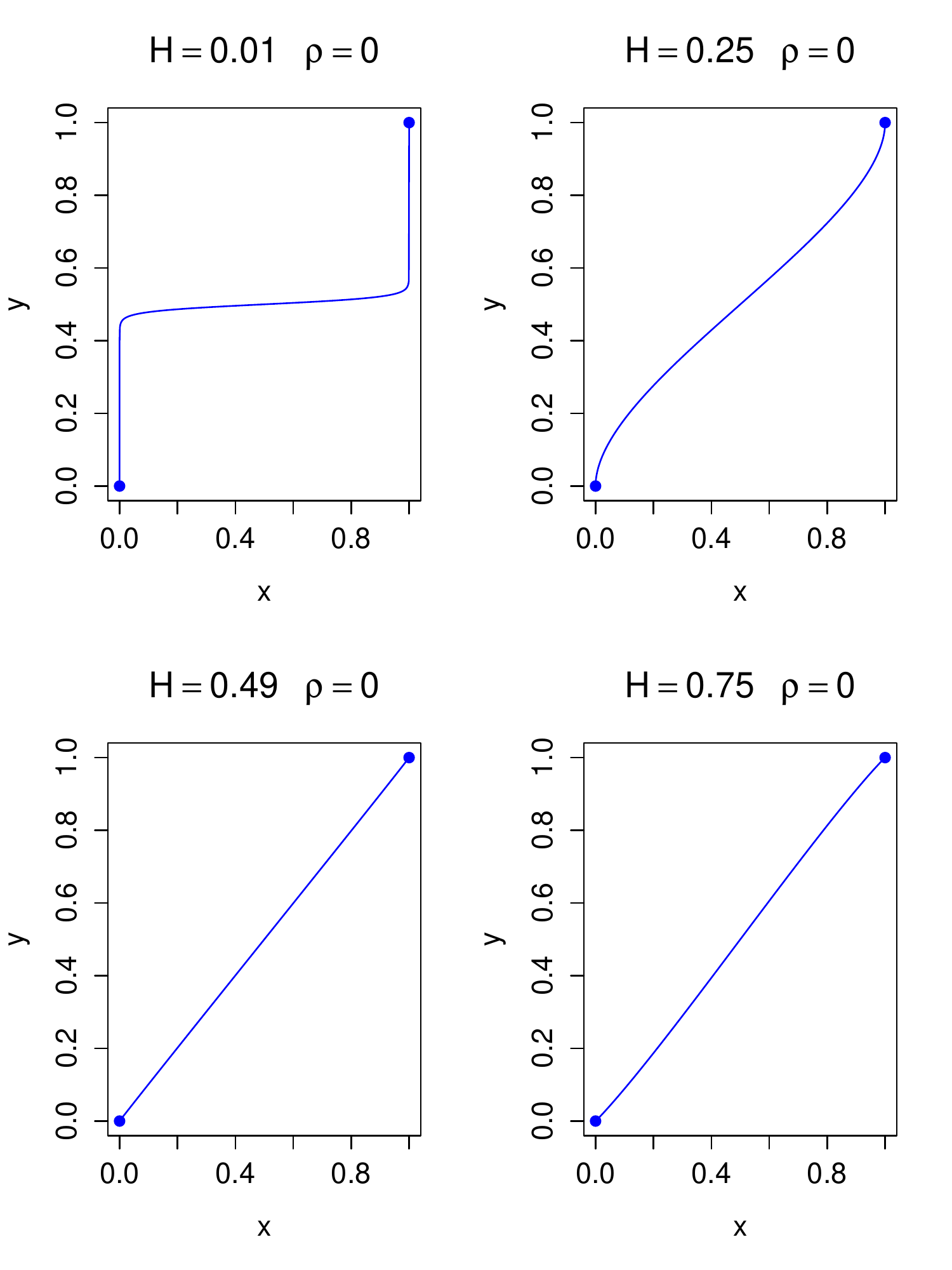} \;
\includegraphics[width=7cm, height=8cm]{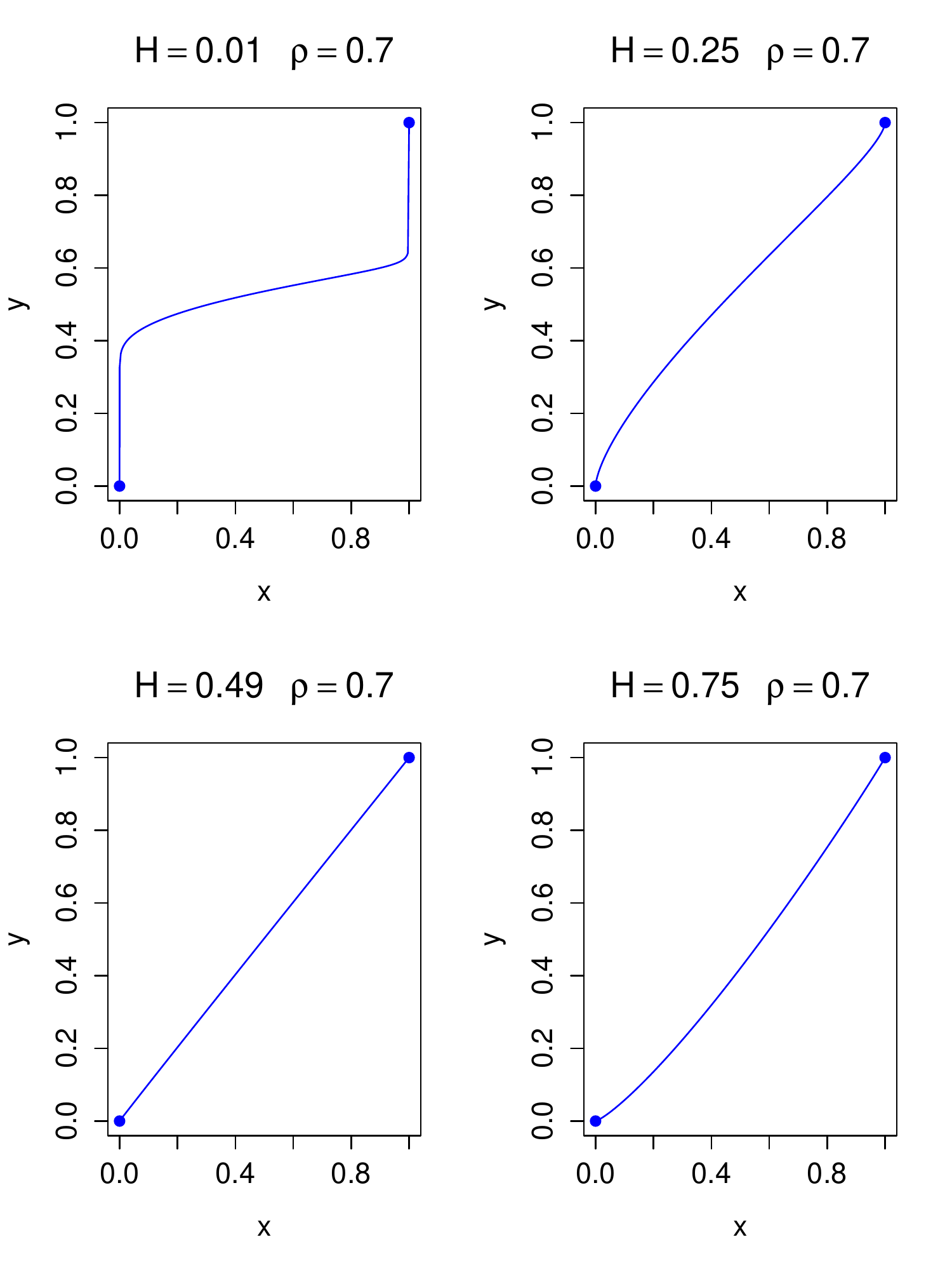}
\end{center}
\caption{The plots of modal-paths from $(x_0,y_0) = (0,0)$ to $(x,y)=(1,1)$ within the time interval $[0,1]$ with $\rho = 0$, $0.7$ and Hurst exponents $H = 0.01$, $0.25$, $0.49$, and $0.75$.}
\label{fig:modal-path1}
\end{figure}
\begin{figure}[ht!]
\begin{center}
\includegraphics[width=7cm, height=8cm]{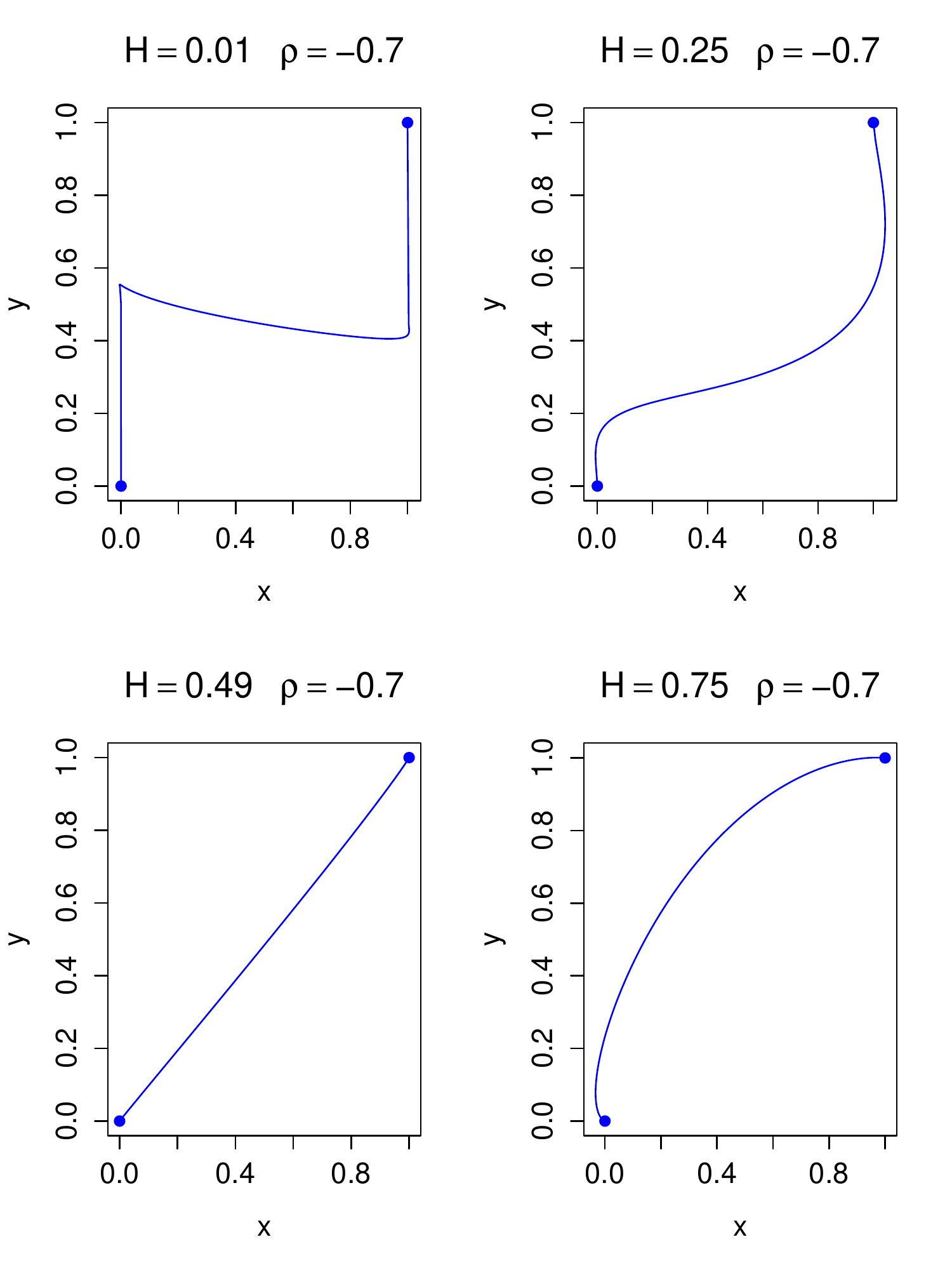} \;
\includegraphics[width=7cm, height=8cm]{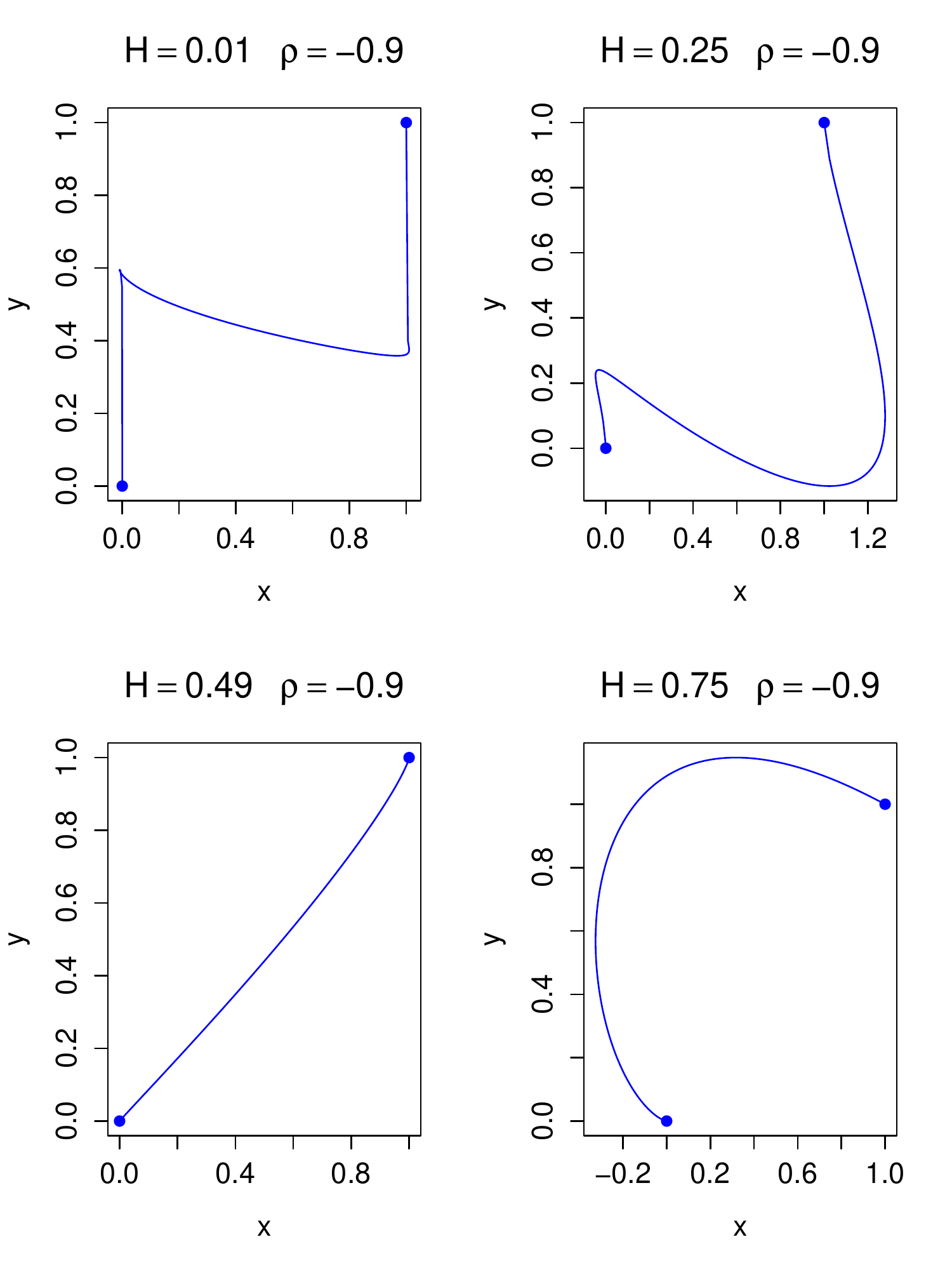}
\end{center}
\caption{The plots of modal-paths from $(x_0,y_0) = (0,0)$ to $(x,y)=(1,1)$ within the time interval $[0,1]$ with $\rho = -0.7$, $-0.9$ and Hurst exponents $H = 0.01$, $0.25$, $0.49$, and $0.75$.}
\label{fig:modal-path2}
\end{figure}
\end{remark}

\subsection{Small time asymptotic of the joint density} $\mbox{}$

Define for $i = 1, 2$
\beaa
&& \bar h_i(s) =h_i(s,x_s^{x,y},y_s^{x,y})
\eeaa
and the $\hat h_i$'s by
\bea
&& \rho\hat{h}_2(t) + \sqrt{1-\rho^2}\hat{h}_1(t) = \bar h_1(t), \label{hat-2} \\
&& \hat h_2(t) = \mathcal{K}_H^{-1}\left(\int_0^\cdot \bar h_2(s) ds\right)(t). \label{hat-1}
\eea
In other words, $\bar h_i(s)$ represents the value of $h_i$ evaluated along the modal path and the $\hat h_i$'s are solution to the system of equations similar to \eqref{h-2-1} and \eqref{h-1-2} except that the $X_s$ and $Y_s$ are substituted by the modal path. The well-definedness of $\hat h_1$ and $\hat h_2$ is proved in Lemma \ref{wdn2} in the Appendix.

As small time asymptotic is concerned, we approximate the two processes $\tilde{h}_1$, $\tilde{h}_2$ by their respective modal-path approximations $\hat h_1$, $\hat h_2$, resulting in the conditional expectation
\begin{equation}\label{expec-hat}
\tilde{\mathbb{E}}_{x,y} \left[e^{\int_0^{T} \hat{h}_1(t) d\tilde{W}_t - \frac12\int_0^{T} \hat{h}_1^2(t) dt + \int_0^{T} \hat{h}_2(t) d\tilde{B}_t - \frac12 \int_0^{T} \hat{h}_2^2(t) dt}\right].
\end{equation}
Now we can evaluate the conditional expectation \eqref{expec-hat} explicitly since the two random variables $G_1:=\int_0^{T} \hat{h}_1(t) d\tilde{W}_t$ and $G_2:=\int_0^{T} \hat h_2(t) d\tilde{B}_t$ are jointly Gaussian. Denote the integral of $h$ by $\intl{h}=\int_0^T h(t)dt$ for any $h\in L^1([0,T])$. 
\begin{lemma}\label{lem-ome}
The logarithm of \eqref{expec-hat} has the explicit expression
\bea\label{Jun-3-16}
\omega(T)&=& \log \tilde{\mathbb{E}}_{x,y} \left[e^{\int_0^{T} \hat{h}_1(t) d\tilde{W}_t - \frac12\int_0^{T} \hat{h}_1^2(t) dt + \int_0^{T} \hat{h}_2(t) d\tilde{B}_t - \frac12 \int_0^{T} \hat{h}_2^2(t) dt}\right] \nonumber\\
&=& \frac1{\bar\rho_H^2}\left\{ \left(\frac{\bar\rho \intl{\hat h_1}}{\sqrt T} + \frac{\rho \intl{\hat h_2}}{\sqrt T} - \frac{\rho_H\intl{\bar h_2}}{T^H}\right) \left(\frac{x - x_0}{\sqrt T}\right) \right.\nonumber\\
&&\left.\quad\quad-\rho_H \left(\frac{\bar\rho\intl{\hat h_1}}{\sqrt T} + \frac{\rho \intl{\hat h_2}}{\sqrt T} - \frac{\rho_H\intl{\bar h_2}}{T^H}\right) \left(\frac{y - y_0}{T^H}\right) + \bar\rho_H^2 \frac{\intl{\bar h_2}}{T^H} \left(\frac{y - y_0}{T^H} \right)\right. \nonumber \\
&& \left.  \quad\quad-\left(\frac{\bar\rho \intl{\hat h_1}}{\sqrt T} + \frac{\rho \intl{\hat h_2}}{\sqrt T} - \frac{\rho_H\intl{\bar h_2}}{T^H}\right)^2 -\left(\frac{\bar{\rho}_H\intl{\bar{h}_2}}{T^H}\right)^2\right\}. 
\eea
Furthermore, since the last two terms in the curly brackets on the right-hand side of \eqref{Jun-3-16} are of higher order compared to the others, we have as $T \to 0$
\bea\label{Jun-3-17}
\omega(T)&=& \log \tilde{\mathbb{E}}_{x,y} \left[e^{\int_0^{T} \hat{h}_1(t) d\tilde{W}_t - \frac12\int_0^{T} \hat{h}_1^2(t) dt + \int_0^{T} \hat{h}_2(t) d\tilde{B}_t - \frac12 \int_0^{T} \hat{h}_2^2(t) dt}\right] \nonumber\\
&=& \frac1{\bar\rho_H^2}\left\{ \left(\frac{\bar\rho \intl{\hat h_1}}{\sqrt T} + \frac{\rho \intl{\hat h_2}}{\sqrt T} - \frac{\rho_H\intl{\bar h_2}}{T^H}\right) \left(\frac{x - x_0}{\sqrt T}\right) \right. \nonumber\\
&& - \left. \rho_H \left(\frac{\bar\rho\intl{\hat h_1}}{\sqrt T} + \frac{\rho \intl{\hat h_2}}{\sqrt T} - \frac{\rho_H\intl{\bar h_2}}{T^H}\right) \left(\frac{y - y_0}{T^H}\right) + \bar\rho_H^2 \frac{\intl{\bar h_2}}{T^H} \left(\frac{y - y_0}{T^H} \right) \right\} + O(T^\alpha)\nonumber\\
&=&\omega_1(T)+ O(T^\alpha),
\eea
where 
\beaa
\omega_1(T)&=& \frac1{\bar\rho_H^2}\left\{ \left(\frac{\bar\rho \intl{\hat h_1}}{\sqrt T} + \frac{\rho \intl{\hat h_2}}{\sqrt T} - \frac{\rho_H\intl{\bar h_2}}{T^H}\right) \left(\frac{x - x_0}{\sqrt T}\right) \right. \nonumber\\
&& - \left. \rho_H \left(\frac{\bar\rho\intl{\hat h_1}}{\sqrt T} + \frac{\rho \intl{\hat h_2}}{\sqrt T} - \frac{\rho_H\intl{\bar h_2}}{T^H}\right) \left(\frac{y - y_0}{T^H}\right) + \bar\rho_H^2 \frac{\intl{\bar h_2}}{T^H} \left(\frac{y - y_0}{T^H} \right) \right\}
\eeaa
and
\begin{equation}\label{Jul-10}
\alpha =
\begin{cases}
3-4H,\ \mbox{if} \ H\in(\frac12,\frac34),\\
2H,\ \mbox{if} \ H\in(0,\frac12].
\end{cases}
\end{equation}
\end{lemma}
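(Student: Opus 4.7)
The plan is to exploit the fact that $\hat h_1, \hat h_2$ are deterministic — because the modal path $(x^{x,y}_\cdot, y^{x,y}_\cdot)$ is deterministic — so the stochastic integrals $G_1 := \int_0^T \hat h_1(t)\, d\tW_t$ and $G_2 := \int_0^T \hat h_2(t)\, d\tB_t$ are centered Gaussian under $\tP$. Since $X_T - x_0 = \rho \tB_T + \sqrt{1-\rho^2}\,\tW_T$ and $Y_T - y_0 = \int_0^T K_H(T,s)\, d\tB_s$ and $\tW, \tB$ are independent, the quadruple $(G_1, G_2, X_T - x_0, Y_T - y_0)$ is jointly Gaussian. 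Conditioned on $(X_T, Y_T) = (x, y)$, the sum $G_1 + G_2$ is therefore Gaussian, and \eqref{Jun-3-16} reduces to the Gaussian moment generating function identity
\[
\omega(T) = \tEof{G_1 + G_2 \mid X_T = x,\, Y_T = y} + \tfrac12 \widetilde{\mathrm{Var}}_{x,y}(G_1 + G_2) - \tfrac12\bigl(\intl{\hat h_1^2} + \intl{\hat h_2^2}\bigr).
\]

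The execution is a covariance calculation. Using the It\^o isometry together with the defining relation \eqref{hat-1} at $t = T$, namely $\int_0^T K_H(T,s) \hat h_2(s)\, ds = \intl{\bar h_2}$, one obtains
\[
\mathbf c := \mathrm{Cov}\bigl(G_1 + G_2,\ (X_T - x_0,\, Y_T - y_0)^T\bigr) = \bigl(\sqrt{1-\rho^2}\intl{\hat h_1} + \rho \intl{\hat h_2},\ \intl{\bar h_2}\bigr)^T,
\]
while $\widetilde{\mathrm{Var}}(G_1 + G_2) = \intl{\hat h_1^2} + \intl{\hat h_2^2}$ by the independence of $\tW, \tB$. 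With $\mathbf\Sigma(T)$ and its explicit inverse already provided in the proof of Theorem \ref{thm:bridge-rep}, the conditional Gaussian formula gives $\tEof{G_1 + G_2 \mid X_T, Y_T} = \mathbf c^T \mathbf\Sigma(T)^{-1}(x - x_0, y - y_0)^T$ and $\widetilde{\mathrm{Var}}_{x,y}(G_1 + G_2) = \intl{\hat h_1^2} + \intl{\hat h_2^2} - \mathbf c^T \mathbf\Sigma(T)^{-1}\mathbf c$; the $\intl{\hat h_1^2} + \intl{\hat h_2^2}$ terms cancel, leaving $\omega(T) = \mathbf c^T \mathbf\Sigma(T)^{-1}(x - x_0, y - y_0)^T - \tfrac12 \mathbf c^T \mathbf\Sigma(T)^{-1} \mathbf c$. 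Rescaling by $(x-x_0)/\sqrt T$ and $(y - y_0)/T^H$ produces the three linear summands of \eqref{Jun-3-16}; completing the square in the quadratic form via $1 - \rho_H^2 = \bar\rho_H^2$ produces the two squared terms.

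For the remainder estimate \eqref{Jun-3-17}, we must bound the dropped quadratic contribution $(a/\sqrt T - \rho_H b/T^H)^2 + \bar\rho_H^2 (b/T^H)^2$, where $a = \sqrt{1-\rho^2}\intl{\hat h_1} + \rho\intl{\hat h_2}$ and $b = \intl{\bar h_2}$. The modal path stays bounded on $[0,T]$ by virtue of the explicit formulas \eqref{eqn:EXtxy}--\eqref{eqn:EYtxy}, so the linear growth of $h_1, h_2$ gives $\|\bar h_i\|_{T, \infty} \le C$ and thus $\intl{\bar h_2} = O(T)$. Pointwise bounds on $\hat h_2$ follow from the inversion representations \eqref{equ-1-3} (for $H > 1/2$) and \eqref{equ-1-5} (for $H \le 1/2$), exactly mirroring the estimates carried out in the proof of Lemma \ref{lma:novikov} but with a deterministic integrand; this yields $|\hat h_2(t)| \le C (t^{1/2 - H} + 1)$ when $H > 1/2$ and $|\hat h_2(t)| \le C$ when $H \le 1/2$, and analogous bounds on $\hat h_1$ via \eqref{hat-2}. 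Integrating and substituting produces the announced remainder $O(T^{3 - 4H})$ when $H \in (\tfrac12, \tfrac34)$ and $O(T^{2H})$ when $H \in (0, \tfrac12]$. The main obstacle is this final case analysis in tracking the fractional-calculus exponents precisely; by contrast the earlier steps are routine joint-Gaussian conditioning once the covariance structure is in place.
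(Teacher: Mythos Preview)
Your approach is essentially the paper's: both recognize that $(G_1,G_2,X_T,Y_T)$ is jointly Gaussian under $\tP$ (since $\hat h_1,\hat h_2$ are deterministic), apply the conditional-Gaussian formula of Lemma~\ref{conditional}, and reduce $\omega(T)$ to a linear form $\mathbf c^T\mathbf\Sigma(T)^{-1}(x-x_0,y-y_0)^T$ minus a quadratic form in $\mathbf c$. The paper carries the pair $(G_1,G_2)$ and writes $\mathbf c^T=\mathbf 1'\mathbf D$ with a $2\times2$ block $\mathbf D$, whereas you collapse immediately to the scalar $G_1+G_2$; this is cosmetic. For the remainder the paper invokes the detailed pointwise bounds collected in Lemma~\ref{lem-7} of the Appendix rather than rederiving them inline.

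One imprecision in your remainder argument: the claim that ``the modal path stays bounded on $[0,T]$ \dots so $\|\bar h_i\|_{T,\infty}\le C$'' is not uniform in $T$. From \eqref{m11}--\eqref{m22} one has $|m_{12}(t;T)|\le CT^{1/2-H}$ and $|m_{21}(t;T)|\le CT^{H-1/2}$, so for fixed $(x,y)$ one of these blows up as $T\to0$ whenever $H\neq\tfrac12$, and $\|\bar h_i\|_{T,\infty}$ inherits an extra factor $T^{-|H-1/2|}$. The paper's estimates \eqref{ap-2-51}, \eqref{Jun-hat-2}--\eqref{Jun-hat-1} (for $H>\tfrac12$) and \eqref{Jun-2-77}--\eqref{Jun-2-79} (for $H\le\tfrac12$) keep these $T$-dependent prefactors explicit before integrating. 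Your final orders $O(T^{3-4H})$ and $O(T^{2H})$ are nonetheless correct, but the route to them requires tracking those extra powers of $T$ rather than asserting a uniform bound.

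As a side remark, your computation yields $-\tfrac12\,\mathbf c^T\mathbf\Sigma(T)^{-1}\mathbf c$ for the quadratic piece, which is correct; the paper's display \eqref{Jun-3-16} and the step \eqref{expo-V} drop this $\tfrac12$, apparently a slip. Since these quadratic terms are discarded in passing to \eqref{Jun-3-17}, the asymptotic conclusion is unaffected.
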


The following is the main theorem of the present paper. 
\begin{theorem}(Small time asymptotic of the joint density) \label{thm:small-time-exp} \\
Assume that $H<\frac34$. The joint probability density $p_T(x,y|x_0,y_0)$ given by \eqref{eqn:bridge-rep} in Theorem \ref{thm:bridge-rep} has the asymptotic expansion as $T \to 0$ 
\bea\label{conv-dist}
p_T(x,y|x_0,y_0) = \phi(x-x_0,y-y_0)e^{\omega_1(T)} ( 1+ o (T^\beta) ),
\eea
\if2
$ \phi\left(x-x_0, y-y_0\right) e^{\omega(T)}$ in the following sense: 
for any bounded and continuous function $f$ defined on $\mathbb{R}^2$ and arbitrary $ \epsilon > 0 $, 
\bea\label{conv-dist}
&&\lim\limits_{T\to 0} T^{\epsilon-\alpha} \int_{\mathbb{R}^2}  f(x,y)\left(p_T(x,y|x_0,y_0)-\phi(x-x_0,y-y_0)e^{\omega(T)}\right)dx dy=0,
\eea
where
\beaa
&& \omega(T) \\
&=& \frac1{\bar\rho_H^2}\left\{-\left(\frac{\bar\rho \intl{\hat h_1}}{\sqrt T} + \frac{\rho \intl{\hat h_2}}{\sqrt T} - \frac{\rho_H\intl{\bar h_2}}{T^H}\right)^2 -\left(\frac{\bar{\rho}_H\intl{\bar{h}_2}}{T^H}\right)^2\right.\nonumber\\
&&\left.+ \left(\frac{\bar\rho \intl{\hat h_1}}{\sqrt T} + \frac{\rho \intl{\hat h_2}}{\sqrt T} - \frac{\rho_H\intl{\bar h_2}}{T^H}\right) \left(\frac{x - x_0}{\sqrt T}\right) \right. \nonumber \\
&& - \left. \rho_H \left(\frac{\bar\rho\intl{\hat h_1}}{\sqrt T} + \frac{\rho \intl{\hat h_2}}{\sqrt T} - \frac{\rho_H\intl{\bar h_2}}{T^H}\right) \left(\frac{y - y_0}{T^H}\right) + \bar\rho_H^2 \frac{\intl{\bar h_2}}{T^H} \left(\frac{y - y_0}{T^H} \right) \right\},
\eeaa
and 
\fi
where $\beta\in(0,\alpha)$ ($\alpha$ is defined in \eqref{Jul-10}), $\phi$ is the Gaussian density given in \eqref{eqn:gaussian-density} and $\omega_1(T)$ is defined in \eqref{Jun-3-17}.
\end{theorem}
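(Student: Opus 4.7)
The plan is to combine the exact bridge representation of Theorem \ref{thm:bridge-rep} with the modal-path substitution, so that the proof reduces to (i) showing that replacing $\tilde h_i$ by $\hat h_i$ inside the bridge expectation introduces only a multiplicative factor of the form $1+o(T^\beta)$, and (ii) invoking Lemma \ref{lem-ome} to compute the resulting Gaussian expectation explicitly and expand it as $e^{\omega_1(T)}(1+O(T^\alpha))$.

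Concretely, by Theorem \ref{thm:bridge-rep} we may write
\begin{equation*}
p_T(x,y|x_0,y_0) = \phi(x-x_0,y-y_0)\,\tilde{\mathbb{E}}_{x,y}\!\left[e^{M_T(\tilde h_1,\tilde h_2)}\right],
\end{equation*}
where $M_T(f_1,f_2):=\int_0^T f_1\,d\tW - \tfrac12\int_0^T f_1^2\,dt + \int_0^T f_2\,d\tB - \tfrac12\int_0^T f_2^2\,dt$. Because $\hat h_1,\hat h_2$ are deterministic, the random variable $M_T(\hat h_1,\hat h_2)$ is Gaussian under $\tP_T^{x,y}$, and Lemma \ref{lem-ome} yields exactly $\tilde{\mathbb{E}}_{x,y}[e^{M_T(\hat h_1,\hat h_2)}] = e^{\omega(T)} = e^{\omega_1(T)}\bigl(1+O(T^\alpha)\bigr)$. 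Therefore it suffices to show
\begin{equation*}
\tilde{\mathbb{E}}_{x,y}\!\left[e^{M_T(\tilde h_1,\tilde h_2)}\right] = \tilde{\mathbb{E}}_{x,y}\!\left[e^{M_T(\hat h_1,\hat h_2)}\right]\bigl(1+o(T^\beta)\bigr)
\end{equation*}
for every $\beta\in(0,\alpha)$.

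First, I would quantify the pathwise closeness of the conditioned bridge $(X_t,Y_t)$ to the modal-path $(x_t^{x,y},y_t^{x,y})$. Under $\tP_T^{x,y}$, the process $(X_t - x_t^{x,y},\,Y_t - y_t^{x,y})$ is a centred Gaussian bridge whose supremum norm is of order $\sqrt T$ (for the $X$-component) and $T^H$ (for the $Y$-component), with analogous Hölder estimates of the type already used in Theorem \ref{holder}. Combined with the Lipschitz assumption \eqref{Lip} on $h_1,h_2$ and with the explicit integral representations \eqref{h-2-1}–\eqref{h-1-2} (and their modal-path analogues \eqref{hat-2}–\eqref{hat-1}, whose well-definedness is Lemma \ref{wdn2}), this yields bounds of the form
\begin{equation*}
\Vert \tilde h_i - \hat h_i\Vert_{L^2([0,T])}^2 \le C\,T^{\alpha}\Bigl(1+\Vert B\Vert_\infty^2 + \Vert W\Vert_\infty^2 + \Vert B^H\Vert_\infty^2 + \cdots \Bigr),
\end{equation*}
with the same exponent $\alpha$ appearing in \eqref{Jul-10}; the Hölder seminorms of the bridge are handled exactly as in the derivation of \eqref{ss-2-19}.

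Second, I would convert this $L^2$-closeness into closeness of the two exponential moments. Writing
\begin{equation*}
e^{M_T(\tilde h_1,\tilde h_2)} - e^{M_T(\hat h_1,\hat h_2)} = e^{M_T(\hat h_1,\hat h_2)}\!\left(e^{M_T(\tilde h_1,\tilde h_2)-M_T(\hat h_1,\hat h_2)}-1\right)
\end{equation*}
and expanding the stochastic integrals as $\int_0^T (\tilde h_i-\hat h_i)\,d\tW_t$ etc., a Cauchy--Schwarz argument combined with a Fernique-type bound on $e^{M_T(\hat h_1,\hat h_2)}$ (finite since $\hat h_i$ are deterministic) and the Novikov-type control from Lemma \ref{lma:novikov} reduces the error to a power of $\Vert \tilde h_i-\hat h_i\Vert_{L^2}$, hence to $T^{\alpha/2}$ times moments of Gaussian norms, which are finite by Fernique's theorem. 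Choosing any $\beta<\alpha$ then gives the claimed $o(T^\beta)$.

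The principal obstacle is the second step: one must handle the interplay between the singular Volterra kernel $K_H$ (especially when $H>1/2$, where the Hölder norms of the bridge enter as in the analysis of $a(t)$ in Lemma \ref{lma:novikov}) and the exponential of a stochastic integral taken under the bridge measure. The range restriction $H<3/4$ in the hypothesis ensures that the exponent $\alpha=3-4H>0$ when $H>1/2$, and it is precisely this threshold that guarantees the Gaussian tail control on $e^{M_T(\hat h_1,\hat h_2)}$ remains uniform in $T$ as $T\to 0$, so that the Cauchy--Schwarz step closes.
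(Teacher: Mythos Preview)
Your overall strategy---compare the exact bridge expectation with $\tilde h_i$ to the modal-path expectation with $\hat h_i$, control the difference via $L^2$ estimates on $\tilde h_i-\hat h_i$, and combine with exponential moment bounds---is the same as the paper's. The implementation, however, differs in one important respect.

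The paper does \emph{not} work under the bridge measure $\tP_T^{x,y}$. Instead it integrates the bridge representation against an arbitrary bounded continuous test function $f$, which converts the conditional expectations back into \emph{unconditional} $\tP$-expectations:
\[
\int \tilde p_T(x,y)\,f(x,y)\bigl(\tilde{\mathbb E}_{x,y}[\xi_T]-\tilde{\mathbb E}_{x,y}[\xi_T(x,y)]\bigr)\,dx\,dy
=\tE{f(X_T,Y_T)\bigl(\xi_T-\xi_T(X_T,Y_T)\bigr)}.
\]
Under $\tP$ the processes $\tW,\tB$ are honest Brownian motions, so Burkholder--Davis--Gundy applies directly to $\int_0^T(\tilde h_i-\hat h_i)\,d\tW$ and $\int_0^T(\tilde h_i-\hat h_i)\,d\tB$. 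The paper then uses the symmetric inequality $|e^u-e^v|\le\frac{e^u+e^v}{2}|u-v|$ (rather than your asymmetric factorization) together with H\"older's inequality; the factor $e^{M_T(\tilde h)}$ is controlled via a Novikov argument for the rescaled exponential martingale, the factor $e^{M_T(\hat h)}$ is computed explicitly since $\hat h_i$ are deterministic, and the factor $|\eta_T|^r$ is bounded by BDG plus the pathwise $L^2$ estimates of Lemma~\ref{lem-7}.

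Your route of staying under $\tP_T^{x,y}$ is more direct in spirit, but it introduces a genuine technical wrinkle you have glossed over: under the bridge measure, $\tW$ and $\tB$ acquire a (singular-at-$T$) drift and are no longer martingales, so the BDG/Cauchy--Schwarz step for the stochastic integrals $\int_0^T(\tilde h_i-\hat h_i)\,d\tW$ does not follow immediately. One can decompose the bridge into its semimartingale parts and push the argument through, but this is exactly the complication the paper sidesteps by passing to unconditional expectations. If you wish to keep your pointwise approach, that decomposition must be made explicit; otherwise the paper's test-function reduction is the cleaner path.
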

\if4
\begin{remark}
Note that the convergence of the asymptotic approximation in Theorem \ref{thm:small-time-exp} is in a sense of weak convergence.
\end{remark}
\fi
\begin{remark}(Classical heat kernel expansion) \\
Let $q(T,x_T,y_T|t,x_t,y_t)$ be the transition density of a two dimensional diffusion process from $(x_t,y_t)$ at time $t$ to $(x_T, y_T)$ at time $T$. Then as $t \to T$, $q$ has the following heat kernel expansion up to zeroth order
\bea
q_T(T,x_T,y_T|t,x_t, y_t) = \frac1{2\pi (T-t)} e^{-\frac{d^2}{2(T-t)}}\, e^{\int_\gamma \inn{V(\gamma(s)}{\dot\gamma(s)} ds}\, \left\{ 1 + O(T-t) \right\}, \label{eqn:hk-exp}
\eea
where $d$ denotes the geodesic distance between $(x_t,y_t)$ and $(x_T,y_T)$ associated with the Riemann metric determined by the diffusion matrix of the underlying process, assumed uniformly elliptic. $\int_\gamma \inn{V}{\dot\gamma} ds$ represents the work done by the vector field $V$, given by the drift of the underlying process, along the geodesic $\gamma$ connecting $(x_t,y_t)$ to $(x_T,y_T)$. See for instance Hsu \cite{eltonbook} (Theorem 5.1.1) for more detailed discussions on heat kernel expansion.
In the case of flat geometry (Euclidean), the geodesic is simply a straight line connecting the initial and terminal points and the geodesic distance is the Euclidean distance. When $H = \frac12$, we show in Example \ref{ex:hk} below that $\omega(T)$ in the approximation \eqref{conv-dist}can be expanded in line with heat kernel expansion up to zeroth order as follows
\beaa
&& \omega(T) \\
&=& \frac1{\bar\rho^2} \left\{\intl{\bar h_1} \left(\frac{x - x_0}{T}\right) - \rho \intl{\bar h_2} \left(\frac{x - x_0}{T}\right) -  \rho \intl{\bar h_1} \left(\frac{y - y_0}{T}\right) + \intl{\bar h_2} \left(\frac{y - y_0}T \right) \right\} + O(T).
\eeaa
as $T \to 0$, which recovers \eqref{eqn:hk-exp} in the Euclidean case. 
\end{remark}

\subsection{Proof of Lemma \ref{lem-ome}}$\mbox{ }$ \\
We prove Lemma \ref{lem-ome} in this subsection. 
\begin{proof}(\textbf{Proof of Lemma \ref{lem-ome}})

Consider the Gaussian random vector
$\bZ := (G_1, G_2, X_T, Y_T)^\prime$.
Note that $\bZ$ has expectation $(0,0,x_0, y_0)^\prime$ and covariance matrix $\mathbf{\Sigma}_{\bZ}$ 
\[
\mathbf{ \Sigma}_{\bZ} = \left[\begin{array}{cc}\mathbf{C}&\mathbf{D}\\ \mathbf{D}^\prime&\mathbf{\Sigma(T)}\end{array}\right],
\]
where
\[
\mathbf{C} = \left[\begin{array}{cc}
\intl{\hat h_1^2} & 0 \\
0 & \intl{\hat h_2^2}
\end{array}\right], \quad
\mathbf{D }= \left[\begin{array}{cc}
\bar\rho \intl{\hat h_1} & 0 \\
\rho \intl{\hat h_2} & \intl{\hat h_2K_H(T,\cdot) }=\intl{\bar h_2}
\end{array}\right],
\]
and $\mathbf{\Sigma}(T)$ is defined in \eqref{A_t}. Let $\1 = (1, 1)'$ denote the $2 \times 1$ column vector with both components being equal to 1. 

By applying Lemma \ref{conditional} in Appendix, we decompose the Gaussian vector $(G_1, G_2)^\prime$ as
\begin{eqnarray*}
\left[\begin{array}{c}G_1\\ G_2\end{array}\right]
&=&\mathbf{ D}\mathbf{\Sigma}(T)^{-1} \left[\begin{array}{c}X_T-x_0\\ Y_T-y_0\end{array}\right] + \left[\begin{array}{c}V_1\\ V_2\end{array}\right],
\end{eqnarray*}
where $(V_1, V_2)^\prime$ is a Gaussian vector independent of $(X_T, Y_T)^\prime$ with zero expectation and covariance matrix given by $\mathbf{C} - \mathbf{D}\mathbf{\Sigma}(T)^{-1}\mathbf{D}^\prime$.
Moreover, by straightforward computations, one can show that the matrix $\mathbf{D}\mathbf{\Sigma}(T)^{-1}\mathbf{D}^\prime$ has the following explicit expression
\beaa
\mathbf{D}\mathbf{\Sigma}(T)^{-1}\mathbf{D}^\prime= \frac1{\bar\rho_H^2}\left[\begin{array}{cc}
\left(\frac{\bar\rho \intl{\hat h_1}}{\sqrt T}\right)^2 & \frac{\bar\rho \intl{\hat h_1}}{\sqrt T} \, \frac{\rho \intl{\hat h_2}}{\sqrt T} - \frac{\bar\rho \intl{\hat h_1}}{\sqrt T}\, \frac{\rho_H \intl{\bar h_2}}{T^H} \\
\frac{\bar\rho \intl{\hat h_1}}{\sqrt T} \, \frac{\rho \intl{\hat h_2}}{\sqrt T} - \frac{\bar\rho \intl{\hat h_1}}{\sqrt T}\, \frac{\rho_H \intl{\bar h_2}}{T^H} & \left(\frac{\rho \intl{\hat h_2}}{\sqrt T}\right)^2 - 2 \frac{\rho \intl{\hat h_2}}{\sqrt T} \frac{\rho_H \intl{\bar h_2}}{T^H} + \left(\frac{\intl{\bar h_2}}{T^H}\right)^2
\end{array}\right].
\eeaa
Thus, we can calculate \eqref{expec-hat} by using the above decomposition
 \begin{eqnarray}\label{expo-xy}
 && \tilde{\mathbb{E}}_{x,y} \left[e^{\int_0^{T} \hat{h}_1(t) d\tilde{W}_t - \frac12\int_0^{T} \hat{h}_1^2(t) dt + \int_0^{T} \hat{h}_2(t) d\tilde{B}_t - \frac12 \int_0^{T} \hat{h}_2^2(t) dt}\right]\nonumber \\
 &=& e^{- \frac12\int_0^{T} \hat{h}_1^2(t) dt - \frac12\int_0^{T} \hat{h}_2^2(t) dt} \tilde{\mathbb{E}}_{x,y}\left[ e^{G_1 + G_2 } \right] \nonumber \\
&=& e^{ - \frac12 \left(\intl{\hat{h}_1^2} + \intl{\hat{h}_2^2}\right)} \times \exp\left\{\1'\,\mathbf{D}\mathbf{\Sigma}(T)^{-1} \left[\begin{array}{c}x-x_0\\ y-y_0\end{array}\right]\right\}     \times\tE{e^{V_1+V_2}}.
\end{eqnarray}
Note that
\bea
&&\1'\,\mathbf{D}\mathbf{\Sigma}(T)^{-1} \left[\begin{array}{c}x-x_0\\ y-y_0\end{array}\right]\nonumber\\
&=& \frac1{\bar\rho_H^2}\left\{
\left(\frac{\bar\rho \intl{\hat h_1}}{\sqrt T} + \frac{\rho \intl{\hat h_2}}{\sqrt T} - \frac{\rho_H\intl{\bar h_2}}{T^H}\right) \left(\frac{x - x_0}{\sqrt T}\right) \right. \nonumber\\
&& - \left. \rho_H \left(\frac{\bar\rho\intl{\hat h_1}}{\sqrt T} + \frac{\rho \intl{\hat h_2}}{\sqrt T} - \frac{\rho_H \intl{\bar h_2}}{T^H}\right) \left(\frac{y - y_0}{T^H}\right) + \bar\rho_H^2 \frac{\intl{\bar h_2}}{T^H} \left(\frac{y - y_0}{T^H} \right) \right\},
\eea
and
\begin{eqnarray} \label{expo-V} 
&& \tE{e^{V_1+V_2}} = e^{\frac12 \var(V_1 + V_2)} = e^{ \1' (\mathbf{C} - \mathbf{D}\mathbf{\Sigma}(T)^{-1}\mathbf{D}^\prime) \1 }\nonumber\\
&=& e^{\frac12 \left(\intl{\hat{h}_1^2} + \intl{\hat{h}_2^2}\right)} \, e^{ -\1' (\mathbf{D}\mathbf{\Sigma}(T)^{-1}\mathbf{D}^\prime) \1 }\nonumber\\
&=& e^{\frac12 \left(\intl{\hat{h}_1^2} + \intl{\hat{h}_2^2}\right)} 
\times 
\exp\left\{\, \frac{1}{\bar{\rho}_H^2}\left(\left(\frac{\bar{\rho}\intl{\hat{h}_1}}{\sqrt{T}}+\frac{\rho\intl{\hat{h}_2}}{\sqrt{T}}-\frac{\rho_H\intl{\bar{h}_2}}{T^H}\right)^2+\left(\frac{\bar{\rho}_H\intl{\bar{h}_2}}{T^H}\right)^2\right)\right\}. \nonumber\\
\end{eqnarray}
Therefore, by combining \eqref{expo-xy} - \eqref{expo-V} we obtain
\bea
&&\log \tilde{\mathbb{E}}_{x,y} \left[e^{\int_0^{T} \hat{h}_1(t) d\tilde{W}_t - \frac12\int_0^{T} \hat{h}_1^2(t) dt + \int_0^{T} \hat{h}_2(t) d\tilde{B}_t - \frac12 \int_0^{T} \hat{h}_2^2(t) dt}\right] \nonumber \\
&=&\1'\,\mathbf{D}\mathbf{\Sigma}(T)^{-1} \left[\begin{array}{c}x-x_0\\ y-y_0\end{array}\right]- \1' (\mathbf{D}\mathbf{\Sigma}(T)^{-1}\mathbf{D}^\prime) \1 \nonumber \\
&=& \frac1{\bar\rho_H^2}\left\{+ \left(\frac{\bar\rho \intl{\hat h_1}}{\sqrt T} + \frac{\rho \intl{\hat h_2}}{\sqrt T} - \frac{\rho_H\intl{\bar h_2}}{T^H}\right) \left(\frac{x - x_0}{\sqrt T}\right) \right.\nonumber\\
&&\left.\quad\quad-\rho_H \left(\frac{\bar\rho\intl{\hat h_1}}{\sqrt T} + \frac{\rho \intl{\hat h_2}}{\sqrt T} - \frac{\rho_H\intl{\bar h_2}}{T^H}\right) \left(\frac{y - y_0}{T^H}\right) + \bar\rho_H^2 \frac{\intl{\bar h_2}}{T^H} \left(\frac{y - y_0}{T^H} \right)\right. \nonumber \\
&& \left.  \quad\quad-\left(\frac{\bar\rho \intl{\hat h_1}}{\sqrt T} + \frac{\rho \intl{\hat h_2}}{\sqrt T} - \frac{\rho_H\intl{\bar h_2}}{T^H}\right)^2 -\left(\frac{\bar{\rho}_H\intl{\bar{h}_2}}{T^H}\right)^2\right\},\nonumber
\eea
which is the right-hand side of \eqref{Jun-3-16}.

Furthermore,  in the case of $H\in(\frac12,\frac34)$, by straightforward calculation based on \eqref{ap-2-51}, \eqref{Jun-hat-2} and \eqref{Jun-hat-1}, we obtain the following estimates
\bea\label{Jun-3-22}
\frac{\intl{\bar{h}_2}}{T^H}&\leq& C(1+|x-x_0|+|y-y_0|)T^{1-H}+C|y-y_0|T^{\frac{3-4H}{2}}\nonumber\\
&\leq&C(1+|x-x_0|+|y-y_0|)T^{\frac{3-4H}{2}},
\eea
and, for $i=1,2$,
\bea\label{Jun-3-23}
\frac{ \intl{\hat h_i}}{\sqrt T}&\leq&C(1+|x-x_0|+|y-y_0|)T^{1-H}+C|y-y_0|T^{\frac{3-4H}{2}}+CT^{1+\gamma-H}\nonumber\\
&&\quad +C|x-x_0|T^{1-H}+C|y-y_0|(T^{\frac{3-4H}{2}}+T^{1-H})\nonumber\\
&\leq& C(1+|x-x_0|+|y-y_0|)T^{\frac{3-4H}{2}}.
\eea
Then, \eqref{Jun-3-16}, \eqref{Jun-3-22} and \eqref{Jun-3-23} imply \eqref{Jun-3-17} for the case of $H\in(\frac12,\frac34)$.

In the case of $H\in(0, \frac12]$, it follows from \eqref{Jun-2-77} - \eqref{Jun-2-79} the inequalities
\bea\label{Jun-3-24}
\frac{\intl{\bar{h}_2}}{T^H}&\leq& C(1+|x-x_0|+|y-y_0|)T^{1-H}+C|x-x_0|T^{\frac12}\nonumber\\
&\leq&C(1+|x-x_0|+|y-y_0|)T^{\frac12}\nonumber\\
&\leq& C(1+|x-x_0|+|y-y_0|)T^{H}.
\eea
and, for $i=1,2$,
\bea\label{Jun-3-25}
\frac{ \intl{\hat h_i}}{\sqrt T}&\leq&C(1+|x-x_0|+|y-y_0|)T^{\frac12}+C|x-x_0|T^{H}\nonumber\\
&\leq& C(1+|x-x_0|+|y-y_0|)T^{H}.
\eea
Therefore, by \eqref{Jun-3-16}, \eqref{Jun-3-24} and \eqref{Jun-3-25} we obtain \eqref{Jun-3-17} for the case of $H\in(0,\frac12]$.
\end{proof}

\subsection{Proof of Theorem \ref{thm:small-time-exp}} $\mbox{}$

This subsection is devoted to the proof of Theorem \ref{thm:small-time-exp}.
\begin{proof}(\textbf{Proof of Theorem \ref{thm:small-time-exp}})

 For any fixed $(x,y)\in\mathbb{R}^2$, we denote
\[
\xi_T(x,y)= e^{\int_0^{T} \hat{h}_1(t) d\tilde{W}_t - \frac12\int_0^{T} \hat{h}_1^2(t) dt + \int_0^{T} \hat{h}_2(t) d\tilde{B}_t - \frac12 \int_0^{T} \hat{h}_2^2(t) dt},
\]
\[
\xi_T= e^{\int_0^{T} \th_1(t) d\tilde{W}_t - \frac12\int_0^{T} \th_1^2(t) dt + \int_0^{T} \th_2(t) d\tilde{B}_t - \frac12 \int_0^{T} \th_2^2(t) dt},
\]
and
\begin{eqnarray}\label{eta-T}
\eta_T(x,y)&=&\int_0^T(\th_1(t)-\hat{h}_1(t)) d\tilde{W}_t - \frac12\int_0^{T}(\th_1(t)^2- \hat{h}_1^2(t)) dt\nonumber\\
&&+\int_0^T(\th_2(t)-\hat{h}_2(t)) d\tilde{B}_t - \frac12\int_0^{T}(\th_2(t)^2- \hat{h}_2^2(t)) dt.
\end{eqnarray}

This theorem claims that $\mathbb{E}_{x,y}\left[\xi_T(x,y)\right]$ is a limit of $\mathbb{E}_{x,y}\left[\xi_T\right]$ as $T\to 0$. From Theorem \ref{thm:bridge-rep} and the calculation of $\omega(T)$ in Lemma \ref{lem-ome} we only need to show that, for any bounded and continuous function $f$ defined on $\mathbb{R}^2$, the following limit holds 
\bea\label{conv-dist-1}
&&\lim\limits_{T\to 0} \int_{\mathbb{R}^2}\tilde p_T(x,y|x_0, y_0) f(x,y)\left( \tilde{\mathbb E}_{x,y}\left[\xi_T\right]-\tilde{\mathbb{E}}_{x,y}\left[\xi_T(x,y)\right]\right)dx dy\nonumber\\
&=&\lim\limits_{T\to 0} \tE{f(X_T,Y_T)(\xi_T-\xi_T(x,y))}=0.
\eea
Notice that 
\bea
\lim\limits_{T\to 0} \left|\tE{f(X_T,Y_T)(\xi_T-\xi_T(x,y))}\right|\leq \Vert f\Vert_\infty\lim\limits_{T\to 0}\tE{\left|\xi_T-\xi_T(x,y)\right|}.
\eea
Thus, in order to show \eqref{conv-dist-1}, it suffices to show
\begin{equation}\label{lim-diff}
\lim\limits_{T\to 0}\tE{\left|\xi_T-\xi_T(x,y)\right|}=0.
\end{equation}
Using the inequality $|e^u-e^v|\leq \frac{e^u+e^v}{2} |u-v|$ for any $u,v\in\mathbb{R}$, we have the following bound 
\begin{eqnarray}\label{lim-2-D}
|\xi_T-\xi_T(x,y)|
&\leq&\frac12\left(\xi_T+\xi_T(x,y)\right)|\eta_T(x,y)|\nonumber\\ 
&=&\frac12\tilde{D}_T(x,y)+\frac12\hat{D}_T(x,y),
\end{eqnarray}
where
\[
\tilde{D}_T(x,y)=\xi_T|\eta_T(x,y)|=e^{\int_0^{T} \tilde{h}_1(t) d\tilde{W}_t - \frac12\int_0^{T} \tilde{h}_1^2(t) dt + \int_0^{T} \tilde{h}_2(t) d\tilde{B}_t - \frac12 \int_0^{T} \tilde{h}_2^2(t) dt}|\eta_T(x,y)|,
\]
and
\[
\hat{D}_T(x,y)=\xi_T(x,y)|\eta_T(x,y)|=e^{\int_0^{T} \hat{h}_1(t) d\tilde{W}_t - \frac12\int_0^{T} \hat{h}_1^2(t) dt + \int_0^{T} \hat{h}_2(t) d\tilde{B}_t - \frac12 \int_0^{T} \hat{h}_2^2(t) dt}|\eta_T(x,y)|.
\]

First, for any  $r>0$, we will estimate $\tE{|\eta_T(x,y)|^r}$.  From \eqref{eta-T}, the Burkholder-Davis-Gundy inequality and the Cauchy-Schwartz inequality, it implies that
\begin{eqnarray}\label{ap-2-69}
&&\tilde{\mathbb{E}}\left[|\eta_T(x,y)|^r\right]\nonumber\\
&\leq&C\left(\tilde{\mathbb{E}}\left[\left|\int_0^T(\tilde{h}_1(t)-\hat{h}_1(t))d\tilde{W}_t\right|^r\right]+\tilde{\mathbb{E}}\left[\left|\int_0^T(\tilde{h}_2(t)-\hat{h}_2(t))d\tilde{B}_t\right|^r\right]\right.\nonumber\\
&&+\tilde{\mathbb{E}}\left[\left(\int_0^T(|\tilde{h}_1(t)|^2+|\hat{h}_1(t)|^2)dt\right)^{\frac{r}2}\left(\int_0^T(|\tilde{h}_1(t)-\hat{h}_1(t)|^2)dt\right)^{\frac{r}{2}}\right]\nonumber\\
&&+\tilde{\mathbb{E}}\left[\left(\int_0^T(|\tilde{h}_2(t)|^2+|\hat{h}_2(t)|^2)dt\right)^{\frac{r}2}\left(\int_0^T(|\tilde{h}_2(t)-\hat{h}_2(t)|^2)dt\right)^{\frac{r}{2}}\right]\nonumber\\
&\leq&\tilde{\mathbb{E}}\left[\left(\int_0^T|\tilde{h}_1(t)-\hat{h}_1(t)|^2dt\right)^{\frac{r}2}\right]+\tilde{\mathbb{E}}\left[\left(\int_0^T|\tilde{h}_2(t)-\hat{h}_2(t)|^2dt\right)^{\frac{r}2}\right]\nonumber\\
&&+\tilde{\mathbb{E}}\left[\left(\int_0^T(|\tilde{h}_1(t)|^2+|\hat{h}_1(t)|^2)dt\right)^{\frac{r}2}\left(\int_0^T(|\tilde{h}_1(t)-\hat{h}_1(t)|^2)dt\right)^{\frac{r}{2}}\right]\nonumber\\
&&+\tilde{\mathbb{E}}\left[\left(\int_0^T(|\tilde{h}_2(t)|^2+|\hat{h}_2(t)|^2)dt\right)^{\frac{r}2}\left(\int_0^T(|\tilde{h}_2(t)-\hat{h}_2(t)|^2)dt\right)^{\frac{r}{2}}\right].\nonumber\\
\end{eqnarray}
From Lemma \ref{lem-7} in Appendix, we can bound the right-hand side of \eqref{ap-2-69} in the case of $H\leq\frac12$ and the case of $H\in (\frac12,\frac34)$.

When $H\leq\frac12$, the right-hand side of \eqref{ap-2-69} can be bounded by
\bea\label{Jun-2-90}
&&C(1+\tE{\Vert\tilde{B}\Vert_\infty^r+\Vert\tilde{W}\Vert_\infty^r+\Vert\tilde{B^H}\Vert_\infty^r}+|x-x_0|^r+|y-y_0|^r)T^{Hr}\nonumber\\
&+&C(1+\tE{\Vert\tilde{B}\Vert_\infty^{2r}+\Vert\tilde{W}\Vert_\infty^{2r}+\Vert\tilde{B^H}\Vert_\infty^{2r}}+|x-x_0|^{2r}+|y-y_0|^{2r})T^{2Hr}\nonumber\\
&\to& 0,\ \mbox{as}\ T\to 0.
\eea
When $H\in (\frac12,\frac34)$, the right-hand side of \eqref{ap-2-69} can be bounded by
\bea\label{Jun-2-91}
&&C(1+\tE{\Vert\tilde{B}\Vert_{\frac12-\epsilon}^r+\Vert\tilde{W}\Vert^r_{\frac12-\epsilon}+\Vert\tilde{B}^H\Vert^r_{\frac12-\epsilon}+\Vert \tilde{B}\Vert^r_{\infty}+\Vert \tilde{W}\Vert^r_{\infty}+\Vert \tilde{B}^H\Vert^r_{\infty}})T^{(1-H)r}\nonumber\\
&+&C(|x-x_0|^r+|y-y_0|^r)T^{(1-H)r}+C|y-y_0|^rT^{\frac{(3-4H)r}{2}}\nonumber\\
&+&C(1+\tE{\Vert\tilde{B}\Vert_{\frac12-\epsilon}^{2r}+\Vert\tilde{W}\Vert^{2r}_{\frac12-\epsilon}+\Vert\tilde{B}^H\Vert^{2r}_{\frac12-\epsilon}+\Vert \tilde{B}\Vert^{2r}_{\infty}+\Vert \tilde{W}\Vert^{2r}_{\infty}+\Vert \tilde{B}^H\Vert^{2r}_{\infty}})T^{(2-2H)r}\nonumber\\
&+&C(|x-x_0|^{2r}+|y-y_0|^{2r})T^{(2-2H)r}+C|y-y_0|^{2r}T^{(3-4H)r}\nonumber\\
&\to& 0,\ \mbox{as}\ T\to 0.
\eea
Therefore, by \eqref{ap-2-69} - \eqref{Jun-2-91} we can show 
\begin{equation}\label{lim-eta}
\lim\limits_{T\to0}\tilde{\mathbb{E}}\left[|\eta_T(x,y)|^r\right]=0,
\end{equation}
for any $r>0$.

Now, we fix $p>0$ and $q>0$ with $\frac1p+\frac1q=1$. First, let us show $\lim\limits_{T\to 0}\tilde{\mathbb{E}}\left[\hat{D}_T(x,y)\right]=0$. Using H\"{o}lder's inequality for conditional expectation, by \eqref{lim-eta}, Part (b) for the case of $H\in(\frac12,\frac34)$ and Part (f) for the case of $H\leq \frac12$ in Lemma \ref{lem-7} we obtain
\begin{eqnarray}\label{hat-D}
&&\lim\limits_{T\to 0}\tilde{\mathbb{E}}\left[\hat{D}_T(x,y)\right]\nonumber\\
&\leq &\lim\limits_{T\to 0}\left(\tilde{\mathbb{E}}\left[e^{\int_0^{T} p\hat{h}_1(t) d\tilde{W}_t - \frac{p}2\int_0^{T} \hat{h}_1^2(t) dt + \int_0^{T}p \hat{h}_2(t) d\tilde{B}_t - \frac{p}2 \int_0^{T} \hat{h}_2^2(t) dt}\right]\right)^{\frac1p}\left(\tilde{\mathbb{E}}\left[|\eta_T(x,y)|^q\right]\right)^{\frac1q}\nonumber\\
&=&\left(\lim\limits_{T\to 0}e^{\frac{p(p-1)}2\int_0^{T}( \hat{h}_1^2(t) + \hat{h}_2^2(t)) dt}\right)\left(\lim\limits_{T\to 0}\left(\tilde{\mathbb{E}}\left[|\eta_T(x,y)|^q\right]\right)^{\frac1q}\right)=1\cdot 0=0.
\end{eqnarray}

Next, we will show $\lim\limits_{T\to 0}\tE{\tilde{D}_T(x,y)}=0$. Using the techniques in the proof of Lemma \ref{lma:novikov} and the estimates in Lemma \ref{lem-7}, we can find some $0<t_1<1$ such that
\[
\tilde{\mathbb{E}}\left[ e^{\frac{p^2}2\int_0^{t_1} \tilde{h}_1^2(t) dt+\frac{p^2}2\int_0^{t_1} \tilde{h}_1^2(t) dt}\right]<\infty,
\]
and hence
\begin{equation}\label{exp-1}
\tilde{\mathbb{E}}\left[e^{\int_0^{T} p\tilde{h}_1(t) d\tilde{W}_t - \frac{p^2}2\int_0^{T} \tilde{h}_1^2(t) dt + \int_0^{T} p\tilde{h}_2(t) d\tilde{B}_t - \frac{p^2}2 \int_0^{T} \tilde{h}_2^2(t) dt}\right]=1.
\end{equation}
Without loss of generality, we assume $T<t_1$. Applying H\"{o}lder's inequality, the Cauchy-Schwartz inequality, Part (a) for the case of $H\in(\frac12,\frac34)$ and Part (e) for the case of $H\leq \frac12$ in Lemma \ref{lem-7},  \eqref{lim-eta}, \eqref{exp-1} and the dominated convergence theorem, one can obtain
\begin{eqnarray}\label{til-D}
&&\lim\limits_{T\to 0}\tilde{\mathbb{E}}\left[\tilde{D}_T(x,y)\right]\nonumber\\
&=&\lim\limits_{T\to 0}\tilde{\mathbb{E}}\left[e^{\int_0^{T} \tilde{h}_1(t) d\tilde{W}_t - \frac12\int_0^{T} \tilde{h}_1^2(t) dt + \int_0^{T} \tilde{h}_2(t) d\tilde{B}_t - \frac12 \int_0^{T} \tilde{h}_2^2(t) dt}|\eta_T(x,y)|\right]\nonumber\\
&\leq&\lim\limits_{T\to 0}\left[\left(\tilde{\mathbb{E}}\left[e^{\int_0^{T} p\tilde{h}_1(t) d\tilde{W}_t - \frac{p^2}2\int_0^{T} \tilde{h}_1^2(t) dt + \int_0^{T} p\tilde{h}_2(t) d\tilde{B}_t - \frac{p^2}2 \int_0^{T} \tilde{h}_2^2(t) dt}\right]\right)^{\frac1p}\right.\nonumber\\
&&\left.\times\left(\tilde{\mathbb{E}}\left[e^{(p-1)q\int_0^{T} \tilde{h}_1^2(t) dt+(p-1)q\int_0^{T} \tilde{h}_2^2(t) dt}\right]\right)^{\frac1{2q}}\left(\tilde{\mathbb{E}}\left[|\eta_T(x,y)|^{2q}\right]\right)^{\frac1{2q}}\right]\nonumber\\
&=&\lim\limits_{T\to 0}\left(\tilde{\mathbb{E}}\left[e^{(p-1)q\int_0^{T} \tilde{h}_1^2(t) dt+(p-1)q\int_0^{T} \tilde{h}_2^2(t) dt}\right]\right)^{\frac1{2q}}\lim\limits_{T\to 0}\left(\tilde{\mathbb{E}}\left[|\eta_T(x,y)|^{2q}\right]\right)^{\frac1{2q}}\nonumber\\
&\leq&1\cdot 0=0.
\end{eqnarray}
Therefore, \eqref{lim-2-D}, \eqref{hat-D} and \eqref{til-D} imply \eqref{lim-diff}, which completes the proof. 
\end{proof}

\subsection{Examples} $\mbox{ }$ \label{sec:examples} \\
We illustrate the $\omega(T)$ in the modal-path approximation \eqref{conv-dist} more explicitly by considering the following particular examples. Example \ref{ex:hk} shows the recovery of the classical heat kernel expansion when $H = \frac12$. 
\begin{example}
If the drift terms $h_1$ and $h_2$ are both independent of $x$ and $y$, we have $\tilde{h}_i=\hat{h}_i$, for $i=1, 2$. One can easily verify that the representation
\beaa
p_T(x,y|x_0,y_0)=\phi(x-x_0, y-y_0)e^{\omega(T)}
\eeaa
as in \eqref{conv-dist} is exact. 
\end{example}

\begin{example}(Recovery of classical heat kernel expansion up to zeroth order) \label{ex:hk} \\
Let $H = \frac12$. Note that in this case $\rho_H = \rho$, $\hat h_2 = \bar h_2$. Thus, $\bar\rho \hat h_1 = \bar h_1 - \rho \bar h_2$. The function $\omega$ simplifies to
\beaa
&& \omega(T) \\
&=& \frac1{\bar\rho^2} \left\{-\left(\frac{\intl{\bar h_1}-\rho\intl{\bar h_2}}{\sqrt T} \right)^2 -\left(\frac{\bar \rho \intl{\bar h_2}}{\sqrt{T}}\right)^2+ \left(\intl{\bar h_1} - \rho \intl{\bar h_2} \right) \left(\frac{x - x_0}{T}\right) \right. \\
&& + \left.  (\intl{\bar h_2} - \rho\intl{\bar h_1}) \left(\frac{y - y_0}{T}\right)  \right\} \\
&=& \frac1{\bar\rho^2} \left\{\intl{\bar h_1} \left(\frac{x - x_0}{T}\right) - \rho \intl{\bar h_2} \left(\frac{x - x_0}{T}\right) -  \rho \intl{\bar h_1} \left(\frac{y - y_0}{T}\right) + \intl{\bar h_2} \left(\frac{y - y_0}T \right) \right\} + O(T).
\eeaa
Notice that the last expression is exactly the work done by the vector field $h_1 \p_1 + h_2 \p_2$ along the geodesic connecting $(x_0, y_0)$ to $(x,y)$. In this case, the geodesic is simply the straight line connecting $(x_0, y_0)$ and $(x,y)$.
Hence, the small time approximation of $p_T(x,y|x_0,y_0) $ reads 
\bea\label{heat-Jun}
\phi\left(x-x_0, y-y_0\right) e^{ \frac1{\bar\rho^2} \left\{\intl{\bar h_1} \left(\frac{x - x_0}{T}\right) - \rho \intl{\bar h_2} \left(\frac{x - x_0}{T}\right) -  \rho \intl{\bar h_1} \left(\frac{y - y_0}{T}\right) + \intl{\bar h_2} \left(\frac{y - y_0}T \right) \right\}} \left\{ 1 + O(T) \right\} 
\eea
as $T \to 0$. It recovers the classical heat kernel expansion to zeroth order in the two dimensional Euclidean case. 
\end{example}

\begin{example}(Uncorrelated case, i.e., $\rho = 0$) \\
In this case, since $\rho_H = \rho \kappa_H = 0$, $\bar \rho_H = \sqrt{1 - \rho_H^2} = 1$, and notice that $\hat h_1 = \bar h_1$, $\omega$ reduces to
\beaa
\omega(T) = -\left(\frac{\intl{\bar h_1}}{\sqrt T} \right)^2-\left(\frac{\intl{\bar h_2}}{ T^H} \right)^2 + \intl{\bar h_1} \left(\frac{x - x_0}T\right) + \intl{\bar h_2} \left(\frac{y - y_0}{T^{2H}}\right).
\eeaa
Hence,
\beaa
e^{\omega(T)} = e^{\intl{\bar h_1} \left(\frac{x - x_0}T\right) + \intl{\bar h_2} \left(\frac{y - y_0}{T^{2H}}\right)}[1 + o(T^\alpha)].
\eeaa
Thus, the small time approximation of $p_T(x,y|x_0,y_0) $ in this case reads 
\beaa
 \phi(x-x_0, y-y_0) e^{\intl{\bar h_1} \left(\frac{x - x_0}T\right) + \intl{\bar h_2} \left(\frac{y - y_0}{T^{2H}}\right)} (1 + o(T^\alpha)),
\eeaa
as $T\to 0$, which can be regarded as a generalization of the heat kernel expansion up to zeroth order, c.f. \eqref{heat-Jun} in Example \ref{ex:hk}.
\end{example}
\begin{example}
Consider the case where both $h_1$ and $h_2$ are linear functions of $x$ and $y$, say,
\beaa
&& h_i(t,x,y) = \alpha_i(t) x + \beta_i(t) y + \gamma_i(t), \ i = 1, 2.
\eeaa
We impose the following conditions:
\begin{itemize}
\item[(a1)] If $H>\frac12$, we assume that $\alpha_2(0)=\beta_2(0)=0$ and there exists two constants $L>0$ and $\gamma\in(H-\frac12,\frac12)$ such that 
\begin{equation*}
|\alpha_1(t)|+|\beta_1(t)|+|\gamma_1(t)|\leq L,\ \forall t\in[0,T],
\end{equation*}
and
\begin{equation*}
|\alpha_2(t)-\alpha_2(s)|+|\beta_2(t)-\beta_2(s)|+|\gamma_2(t)-\gamma_2(s)|\leq L|t-s|^\gamma,\ \forall s, t\in[0,T].
\end{equation*}
\item[(a2)] If $H\leq \frac12$, we assume that there exists a constant $L>0$ such that
\begin{equation*}
|\alpha_i(t)|+|\beta_i(t)|+|\gamma_i(t)|\leq L,\ \forall t\in[0,T], \ i=1,2.
\end{equation*}
\end{itemize}
Note that the above conditions ensure \eqref{Lip} and \eqref{linear}, and hence Theorem \ref{holder} stays true and \eqref{e-2-4}, \eqref{Jan-2-5} and \eqref{Jan-2-6} still hold.
Moreover, though \eqref{Hold} cannot be guaranteed in this example, we have the following estimate, for any small enough $\epsilon$,
\bea\label{Jun-3-38}
&&|h_2(t,X_t,Y_t)-h_2(s,X_s,Y_s)|\nonumber\\
&\leq& |\gamma_2(t)-\gamma_2(s)|+|\alpha_2(t)-\alpha_2(s)||X_t|+|\alpha_2(s)||X_t-X_s|\nonumber\\
&&\ +|\beta_2(t)-\beta_2(s)||Y_t|+|\beta_2(s)||Y_t-Y_s|\nonumber\\
&\leq&C(1+\Vert B\Vert_\infty+\Vert W\Vert_\infty+\Vert B^H\Vert_\infty)|t-s|^\gamma\nonumber\\
&&\ +C(1+\Vert B\Vert_\infty+\Vert W\Vert_\infty+\Vert B^H\Vert_\infty+\Vert B\Vert_{\frac12-\epsilon}+\Vert W\Vert_{\frac12-\epsilon})|t-s|^{\frac12-\epsilon}\nonumber\\
&&\ +C(1+\Vert B\Vert_\infty+\Vert W\Vert_\infty+\Vert B^H\Vert_\infty+\Vert B^H\Vert_{H-\epsilon})|t-s|^{H-\epsilon}.
\eea
Hence, by Lemma \ref{inverse}, the $\tilde{h}_i$'s are well defined. Using the above estimate and modifying the proof slightly in Lemma \ref{lma:novikov}, it follows that the Novikov's condition in Lemma \ref{lma:novikov} and the change of measure in Lemma \ref{Girsanov} sustain, thereby all the main results in this paper hold for this linear system. 
More importantly, the restriction $H < \frac34$ in Theorem \ref{thm:small-time-exp} can be removed 
in this linear case. In fact, using \eqref{est-m11}-\eqref{est-m22}, \eqref{Jun-3-38} and the assumptions in this example, the following estimates hold.
\begin{enumerate}
\item 
\beaa
|h_1(t,x_t^{x,y}, y_t^{x,y})|
&\leq& K(1+|x_t^{x,y}|+|y_t^{x,y}|)\\
&\leq&C(1+|x-x_0|+|y-y_0|+T^{H-\frac12}|x-x_0|+T^{\frac12-H}|y-y_0|),
\eeaa
which implies
\[
\int_0^T|h_1(t,x_t^{x,y},y_t^{x,y})|^2dt\leq C(1+|x-x_0|+|y-y_0|)T^{\beta},
\]
where
\begin{equation*}
\beta=\begin{cases}
2H,\ \mbox{if}\ H\leq \frac12,\\
2-2H, \ \mbox{if}\ H>\frac12;
\end{cases}
\end{equation*}
\item when $H\leq \frac12$,
\beaa
|h_2(t,x_t^{x,y}, y_t^{x,y})|
&\leq& K(1+|x_t^{x,y}|+|y_t^{x,y}|)\\
&\leq&C(1+|x-x_0|+|y-y_0|+T^{H-\frac12}|x-x_0|),
\eeaa
and when $H>\frac12$,
\beaa 
&&|h_2(t,x_t^{x,y}, y_t^{x,y})|\\
&\leq& |\alpha_2(t)x_0+\beta_2(t)y_0+\gamma_2(t)|+|\alpha_2(t)m_{11}(t;T)+\beta_2(t)m_{21}(t;T)||x-x_0|\\
&&+|\alpha_2(t)m_{12}(t;T)+\beta_2(t)m_{22}(t;T)||y-y_0|\\
&\leq& C(1+|x-x_0|+|y-y_0|+T^{\frac12-H}t^\gamma|y-y_0|);
\eeaa
\item when $H>\frac12$,
\beaa
&&|h_2(t,x_t^{x,y}, y_t^{x,y})-h_2(s,x_s^{x,y}, y_s^{x,y})|\\
&\leq&|(\alpha_2(t)-\alpha_2(s))x_0+(\beta_2(t)-\beta_2(s))y_0+(\gamma_2(t)-\gamma_2(s))|\\
&&+\big[|\alpha_2(t)-\alpha_2(s)||m_{11}(t;T)|+|\alpha_2(s)||m_{11}(t;T)-m_{11}(s;T)|\\
&&+|\beta_2(t)-\beta_2(s)||m_{21}(t;T)|+|\beta_2(s)||m_{21}(t;T)-m_{21}(s;T)|\big]|x-x_0|\\
&&+\big[|\alpha_2(t)-\alpha_2(s)||m_{12}(t;T)|+|\alpha_2(s)||m_{12}(t;T)-m_{12}(s;T)|\\
&&+|\beta_2(t)-\beta_2(s)||m_{22}(t;T)|+|\beta_2(s)||m_{22}(t;T)-m_{12}(s;T)|\big]|y-y_0|\\
&\leq&C|t-s|^\gamma+C\left(|t-s|^\gamma+\frac{|t-s|^{\frac12}}{T^{\frac12}}\right)|x-x_0|\\
&&+C\left(T^{\frac12-H}|t-s|^\gamma+\frac{s^\gamma|t-s|^{\frac12}}{T^H}+|t-s|^\gamma+\frac{|t-s|^H}{T^H}\right)|y-y_0|.
\eeaa
\end{enumerate}
Thus, analogue to the proofs of Lemma \ref{lem-7} one can show the bounds (e), (f), (g) in Lemma \ref{lem-7} in the case of $H\leq \frac12$. For $H\in(\frac12,1)$ one has
\begin{itemize}
\item[(4)] 
\beaa
|h_i(t,X_t, Y_t)|
&\leq& K(1+|X_t|+|Y_t|)\\
&\leq&C(1+\Vert \tilde{B}\Vert_\infty+\Vert \tilde{W}\Vert_\infty+\Vert \tilde{B}^H\Vert_\infty),\ i=1, 2;
\eeaa
\item[(5)] for small enough $\epsilon$,
\beaa
&&|h_2(t,X_t,Y_t)-h_2(s,X_s,Y_s)|\nonumber\\
&\leq& |\gamma_2(t)-\gamma_2(s)|+|\alpha_2(t)-\alpha_2(s)||X_t|+|\alpha_2(s)||X_t-X_s|\nonumber\\
&&\ +|\beta_2(t)-\beta_2(s)||Y_t|+|\beta_2(s)||Y_t-Y_s|\nonumber\\
&\leq&C(1+\Vert \tilde{B}\Vert_\infty+\Vert \tilde{W}\Vert_\infty+\Vert \tilde{B}^H\Vert_\infty)|t-s|^\gamma\\
&&+C(\Vert \tilde{B}\Vert_{\frac12-\epsilon}+\Vert \tilde{W}\Vert_{\frac12-\epsilon}+\Vert \tilde{B}^H\Vert_{\frac12-\epsilon})|t-s|^{\frac12-\epsilon}.
\eeaa
\end{itemize}
Similar to the proofs in Lemma \ref{lem-7}, using (2)-(5) we can show (a) in Lemma \ref{lem-7} and the following two inequalities
\beaa
&&\int_0^T(|\hat{h}_t(t)|^2+|\hat{h}_2(t)|^2)dt\\
&\leq& C(1+|x-x_0|^2+|y-y_0|^2)T^{2-2H}+C|y-y_0|^2T^{2\gamma+3-4H}\\
&&+C(1+|x-x_0|^2+|y-y_0|^2)T^{2\gamma+2-2H}+C|y-y_0|^2T^{2\gamma+3-4H}\\
&\leq&C(1+|x-x_0|^2+|y-y_0|^2)T^{2-2H},
\eeaa
since $\gamma>H-\frac12>0$ (and hence $2\gamma+2-4H>2-2H$), and
\beaa
&&\int_0^T(|\tilde{h}_1(t)-\hat{h}_1(t)|^2+|\tilde{h}_2(t)-\hat{h}_2(t)|^2)dt\nonumber\\
&\leq&C(1+\Vert\tilde{B}\Vert_{\frac12-\epsilon}^2+\Vert\tilde{W}\Vert^2_{\frac12-\epsilon}+\Vert\tilde{B}^H\Vert^2_{\frac12-\epsilon}+\Vert \tilde{B}\Vert^2_{\infty}+\Vert \tilde{W}\Vert^2_{\infty}+\Vert \tilde{B}^H\Vert^2_{\infty})T^{2-2H}\nonumber\\
&&+C(|x-x_0|^2+|y-y_0|^2)T^{2-2H}.
\eeaa

Therefore, in this linear case Theorem \ref{thm:small-time-exp} holds without the restriction $H<\frac34$. Furthermore, based on the above estimates (1)-(3), we can reduce $\omega(T)$ to be
\beaa
\omega(T)
&=& \frac1{\bar\rho_H^2}\left\{ \left(\frac{\bar\rho \intl{\hat h_1}}{\sqrt T} + \frac{\rho \intl{\hat h_2}}{\sqrt T} - \frac{\rho_H\intl{\bar h_2}}{T^H}\right) \left(\frac{x - x_0}{\sqrt T}\right) \right. \nonumber\\
&& - \left. \rho_H \left(\frac{\bar\rho\intl{\hat h_1}}{\sqrt T} + \frac{\rho \intl{\hat h_2}}{\sqrt T} - \frac{\rho_H\intl{\bar h_2}}{T^H}\right) \left(\frac{y - y_0}{T^H}\right) + \bar\rho_H^2 \frac{\intl{\bar h_2}}{T^H} \left(\frac{y - y_0}{T^H} \right) \right\} + O(T^\alpha),\nonumber\\
\eeaa
where 
\begin{equation*}
\alpha =
\begin{cases}
2-2H,\ \mbox{if} \ H\in(\frac12,\frac34),\\
2H,\ \mbox{if} \ H\in(0,\frac12].
\end{cases}
\end{equation*}
\end{example}

%
%

\section{Appendix} \label{sec:prelim}
In this appendix, after reviewing basic but essential background technicalities for dealing with the fractional Brownian motion, several lemmas that are used in the main part will be established.

\subsection{Fractional integrals and derivatives}\label{fid} $\mbox{}$

Let $a,b\in \mathbb{R}$ with $a<b$. Denote by $L^p([a,b]),\,p\geq1$, the usual space of Lebesgue
measurable functions $f:\,[a,b]\rightarrow\mathbb{R}$ for which $\Vert f\Vert_{L^p}<\infty$, where
\begin{equation*}
\Vert f\Vert_{L^p}=
\begin{cases}
\left(\int_a^b\vert f(t)\vert^p dt\right)^{1/p},\,\text{if $1\leq p<\infty$}\\
\text{ess} \sup\{\vert f(t)\vert:\,t\in[a,b]\},\,\text{if $p=\infty$}.
\end{cases}
\end{equation*}
Let $f\in L^{1}\left( [a,b]\right) $ and $%
\alpha >0.$ The left-sided and right-sided fractional Riemann-Liouville
integrals of $f$ of order $\alpha $ are defined for almost all $t\in \left(
a,b\right) $ by
\[
I_{a+}^{\alpha }f\left( t\right) =\frac{1}{\Gamma \left( \alpha \right) }%
\int_{a}^{t}\left( t-s\right) ^{\alpha -1}f\left( s\right) ds
\]%
and
\[
I_{b-}^{\alpha }f\left( t\right) =\frac{\left( -1\right) ^{-\alpha }}{\Gamma
\left( \alpha \right) }\int_{t}^{b}\left( s-t\right) ^{\alpha -1}f\left(
s\right) ds,
\]%
respectively, where $\left( -1\right) ^{-\alpha }=e^{-i\pi \alpha }$ and $%
\Gamma \left( \alpha \right) =\int_{0}^{\infty }r^{\alpha -1}e^{-r}dr$ is
the Euler gamma function. Let $I_{a+}^{\alpha }(L^{p}([a,b]))$ (resp. $%
I_{b-}^{\alpha }(L^{p}([a,b]))$) be the image of $L^{p}([a,b])$ by the operator $%
I_{a+}^{\alpha }$ (resp. $I_{b-}^{\alpha }$).

Fractional integration admits the following composition formulas:
\begin{equation}\label{comp-1}
I_{a+}^{\alpha}I^\beta_{a+}f=I^{\alpha+\beta}_{a+}f,
\end{equation}
and
\begin{equation}
I_{b-}^{\alpha}I^\beta_{b-}f=I^{\alpha+\beta}_{b-}f,
\end{equation}
for any $f\in L^1([a,b])$.

If $f\in I_{a+}^{\alpha
}\left( L^{p}([a,b])\right) \ $ (resp. $f\in I_{b-}^{\alpha }\left( L^{p}([a,b])\right) $)
and $0<\alpha <1$ then the Weyl derivatives are defined as
\begin{equation*}
D_{a+}^{\alpha }f\left( t\right) =\frac{1}{\Gamma \left( 1-\alpha \right) }%
\left( \frac{f\left( t\right) }{\left( t-a\right) ^{\alpha }}+\alpha
\int_{a}^{t}\frac{f\left( t\right) -f\left( s\right) }{\left( t-s\right)
^{\alpha +1}}ds\right) 1_{(a,b)}(t)  \label{1.1}
\end{equation*}%
and
\begin{equation*}
D_{b-}^{\alpha }f\left( t\right) =\frac{\left( -1\right) ^{\alpha }}{\Gamma
\left( 1-\alpha \right) }\left( \frac{f\left( t\right) }{\left( b-t\right)
^{\alpha }}+\alpha \int_{t}^{b}\frac{f\left( t\right) -f\left( s\right) }{%
\left( s-t\right) ^{\alpha +1}}ds\right) 1_{(a,b)}(t)   \label{1.2}
\end{equation*}%
for almost all $t\in(a,b)$ (the convergence of the integrals at the singularity $%
s=t$ holds point-wise for almost all $t\in \left( a,b\right) $ if $p=1$ and
moreover in $L^{p}$-sense if $1<p<\infty $).

\bigskip
For any $\lambda \in (0,1)$, we denote by $C^{\lambda }([a,b])$ the space of $%
\lambda $-H\"{o}lder continuous functions on the interval $[a,b]$.

From Theorems 3.5 and 3.6 in \cite{SKM}, we have:

\begin{itemize}
\item[(i)] If $\alpha <\frac{1}{p}$ and $q=\frac{p}{1-\alpha p}$ then
\begin{equation*}
I_{a+}^{\alpha }\left( L^{p}([a,b])\right) =I_{b-}^{\alpha }\left( L^{p}([a,b])\right)
\subset L^{q}\left( [a,b]\right) .
\end{equation*}

\item[(ii)] If $\alpha >\frac{1}{p}$ then%
\begin{equation*}
I_{a+}^{\alpha }\left( L^{p}([a,b])\right) \,\cup \,I_{b-}^{\alpha }\left(
L^{p}([a,b])\right) \subset C^{\alpha -\frac{1}{p}}\left([ a,b]\right) .
\end{equation*}
\item[(iii)] If $\beta>\alpha$, then
\[
C^\beta([a,b])\subset I^\alpha_{a+}(L^p([a,b])),\quad\quad \forall p>1.
\]
\end{itemize}

The following inversion formulas hold:
\begin{eqnarray*}
I_{a+}^{\alpha }\left( D_{a+}^{\alpha }f\right) &=&f,\quad \quad \;\forall
f\in I_{a+}^{\alpha }\left( L^{p}([a,b])\right)  \label{1.4} \\
I_{b-}^{\alpha }\left( D_{b-}^{\alpha }f\right) &=&f,\quad \quad \;\forall
f\in I_{b-}^{\alpha }\left( L^{p}([a,b])\right)  \label{1.3}
\end{eqnarray*}%
and
\begin{equation*}
D_{a+}^{\alpha }\left( I_{a+}^{\alpha }f\right) =f,\quad D_{b-}^{\alpha
}\left( I_{b-}^{\alpha }f\right) =f,\quad \;\forall f\in L^{1}\left(
[a,b]\right) \,.  \label{1.5}
\end{equation*}%

\subsection{Representation of fractional Brownian motion on an interval} $\mbox{}$

In the following sections, let $T>0$ be a fixed number.

 \begin{definition}
A centered Gaussian process $B^H=\{B^H_t;\,t\in[0,T]\}$ is called fractional Brownian motion (fBm for short) with Hurst parameter $H\in(0,1)$ if it has the covariance function
\begin{equation}\label{covariance}
R_H(s, t)=\mathbb{E}(B^{H}_sB^{H}_t)=\frac{1}{2}(s^{2H}+t^{2H}-|t-s|^{2H}),
\end{equation}
for all $s,\,t\in[0,T]$.
\end{definition}

For $H=\frac{1}{2}$, the process $B^{\frac{1}{2}}$ is a standard Brownian motion. For $H\neq\frac{1}{2}$, the fBm $B^H$ is not a semimartingale.
 It follows from (\ref{covariance}) that
\begin{equation*}
\mathbb{E}(\vert B_t^H-B_s^H\vert^2)=\vert t-s\vert^{2H}.
\end{equation*}
Furthermore, by Kolmogorov's continuity criterion, $B^H$ is H\"{o}lder continuous of order $\beta$ for all $\beta<H$.

 Let $F(a,b,c;z)$ denote the Gauss hypergeometric function defined for any $a,\,b,\,c,\, z\in\mathbb{C}$ with $|z|<1$ and $c\neq 0,-1,-2,\dots$ by
\[
F(a,b,c;z)=\sum_{k=0}^{\infty}\frac{(a)_k(b)_k}{(c)_k}z^k,
\]
where $(a)_0=1$ and $(a)_k=a(a+1)\dots(a+k-1)$ is the Pochhammer symbol.

Let
$B^H=\{B_t^H,\,t\in[0,T]\}$ be a fractional Brownian motion (fBm for short)  with Hurst parameter $H\in(0,1)$  on a complete probability space $(\Omega,\mathcal{F},\mathbb{P})$. The following integral representation is given in  \cite{DU}
\begin{equation*}
B_t^H=\int_0^TK_H(t,s)dB_s,
\end{equation*}
where $B=\{B_t,\,t\in [0,T]\}$ is a standard Brownian motion and
 \begin{equation}\label{K-H}
K_H(t,s)=c_H(t-s)^{H-\frac{1}{2}}F\left(H-\frac{1}{2},\frac{1}{2}-H,H+\frac{1}{2};1-\frac{t}{s}\right)\mathbf{1}_{[0,t]}(s),
\end{equation}
 with $c_H=\left[\frac{2H\Gamma\left(\frac{3}{2}-H\right)}{\Gamma(2-2H)\Gamma\left(H+\frac{1}{2}\right)}\right]^{1/2}$.
 
Theorem 5.2 in \cite{nvv} gives an alternative expression for $K_H(t,s)$
 \begin{equation}\label{K-H-1}
 K_H(t,s)=c_H\left[\left(\frac{t}{s}\right)^{H-\frac12}(t-s)^{H-\frac12}-(H-\frac12)s^{\frac12-H}\int_s^tu^{H-\frac32}(u-s)^{H-\frac12}\right]\mathbf{1}_{[0,t]}(s).
 \end{equation}
 
For any $H\in(0,1)$, consider the integral transform
\begin{eqnarray}
(\mathcal{K}_Hf)(t)&=&\int_0^TK_H(t,s)f(s)ds \label{K-H-mapping} \\
&=&c_H\int_0^t (t-s)^{H-\frac{1}{2}}F\left(\frac{1}{2}-H, H-\frac{1}{2}, H+\frac{1}{2};1-\frac{t}{s}\right) f(s)ds. \nonumber
\end{eqnarray}
Then, we have the following important fact (cf. Theorem 2.1 in \cite{DU} and (10.22) in \cite{SKM}).
\begin{lemma}\label{isom}
The operator $\mathcal{K}_H$ is an isomorphism from $L^2([0,T])$ onto $I_{0+}^{H+\frac{1}{2}}(L^2([0,T]))$ and it can be expressed in terms of fractional integrals as follows
\begin{eqnarray}\mathcal{K}_H f&=&c_HI^1_{0+}u^{H-\frac{1}{2}}I^{H-\frac{1}{2}}_{0+}u^{\frac{1}{2}-H}f,\quad \hbox{if}\ H> \frac{1}{2},\label{equ-1-1}\\
\mathcal{K}_Hf&=&c_HI^{2H}_{0+}u^{\frac{1}{2}-H}I^{\frac{1}{2}-H}_{0+}u^{H-\frac{1}{2}}f,\quad \hbox{if}\ H\leq\frac{1}{2}.\label{equ-1-2}
\end{eqnarray}
\end{lemma}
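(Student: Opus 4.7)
The plan is to verify the explicit factorizations (\ref{equ-1-1}) and (\ref{equ-1-2}) directly at the level of integral kernels, and then derive the isomorphism statement from the mapping properties of $I^{\alpha}_{0+}$ already cited from \cite{SKM}. In both cases, the strategy is to expand the composition on the right-hand side using the definitions of the fractional integrals, interchange the order of integration by Fubini, and identify the resulting kernel with $K_H$ as given in either (\ref{K-H}) or (\ref{K-H-1}).

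For the case $H > \frac12$, I would work with the form (\ref{K-H-1}) of $K_H$. Unfolding the right-hand side of (\ref{equ-1-1}) and interchanging the order of integration yields an integral operator whose kernel is proportional to $s^{1/2-H}\int_s^t u^{H-1/2}(u-s)^{H-3/2}\,du$. To see that this matches $K_H$, I would apply integration by parts to the integral in (\ref{K-H-1}), treating $(H-\tfrac12)(u-s)^{H-3/2}$ as the derivative of $(u-s)^{H-1/2}$; the boundary term at $u=t$ reproduces $(t/s)^{H-1/2}(t-s)^{H-1/2}$ and cancels the first term of (\ref{K-H-1}), leaving precisely the desired integral form, and tracking the $\Gamma$-function prefactors then closes the identity.

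For the case $H\leq\frac12$, similarly expanding (\ref{equ-1-2}) via Fubini leads to a double-integral kernel over the triangular region $\{s < v < w < t\}$. I would change variables twice (first $v = s + r(w-s)$ in the inner integral, then $w = s + \sigma(t-s)$ in the outer) to express the result as an Euler integral of the hypergeometric function $F(H-\tfrac12, \tfrac12-H, H+\tfrac12; 1-t/s)$, thereby matching the form (\ref{K-H}) of $K_H$. Once the two factorizations are established, the isomorphism statement follows from the semigroup identity $I^{\alpha_1}_{0+} I^{\alpha_2}_{0+} = I^{\alpha_1+\alpha_2}_{0+}$ applied with $\alpha_1+\alpha_2 = H+\tfrac12$: this combines with the invertibility of the Erd\'elyi--Kober pieces of the form $u^{1/2-H} I^\alpha_{0+} u^{H-1/2}$ as bounded bijections on $L^2([0,T])$, together with the mapping property of $I^{H+\frac12}_{0+}$ from Theorems 3.5--3.6 of \cite{SKM} to give the bijection onto $I^{H+\frac12}_{0+}(L^2([0,T]))$.

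The main obstacle I foresee is the case $H\leq\tfrac12$: matching the double-integral kernel to the hypergeometric representation in (\ref{K-H}) requires careful bookkeeping of $\Gamma$-function prefactors and the right sequence of substitutions to expose the Euler integral form. The case $H>\tfrac12$, by contrast, reduces cleanly via the single integration by parts described above. Once both kernel identifications are in hand, the rest of the argument is a routine assembly of known properties of the fractional integration operators recalled in the Appendix.
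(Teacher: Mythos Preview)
The paper does not actually prove this lemma: it is stated as a known fact with the citation ``cf.\ Theorem~2.1 in \cite{DU} and (10.22) in \cite{SKM}'', and no argument is given. So there is nothing to compare your proof against on the paper's side.

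Your proposal to supply a self-contained proof by expanding the compositions on the right of (\ref{equ-1-1})--(\ref{equ-1-2}), swapping the order of integration, and identifying the resulting kernel with (\ref{K-H}) or (\ref{K-H-1}) is the standard route and is essentially how the result is established in \cite{DU} and \cite{SKM}. For $H>\tfrac12$ the integration-by-parts step you describe is exactly right: writing $u^{H-1/2}(u-s)^{H-3/2}$ as $p\,dq$ with $q=(u-s)^{H-1/2}/(H-\tfrac12)$ produces the two terms in (\ref{K-H-1}). One caution: when you track the prefactors you will find that the match requires the identity $(H-\tfrac12)\,\Gamma(H-\tfrac12)=\Gamma(H+\tfrac12)$, so the constant $c_H$ in (\ref{equ-1-1}) must be the same $c_H$ as in (\ref{K-H}) only after this $\Gamma$-shift is absorbed; be explicit about this or the constants will appear not to agree. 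For $H\le\tfrac12$ your plan to reduce to an Euler integral for ${}_2F_1$ via two linear substitutions is correct in outline, and your anticipated obstacle---the bookkeeping of $\Gamma$-factors and the identification of the Euler form---is real but routine. The isomorphism claim then follows exactly as you say from the composition rule (\ref{comp-1}) and the mapping properties (i)--(iii) recalled in Section~\ref{fid}.
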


From \eqref{equ-1-1} and \eqref{equ-1-2}, the inverse operator $\mathcal{K}_H^{-1}$ is given by
\begin{align}
\mathcal{K}_H^{-1}h=&\ c_H^{-1}s^{H-\frac{1}{2}}D_{0+}^{H-\frac{1}{2}}s^{\frac{1}{2}-H}h^\prime, \quad \hbox{if}\ H> \frac{1}{2},\label{equ-1-3}\\
\mathcal{K}_H^{-1}h=&\ c_H^{-1}s^{\frac{1}{2}-H}D_{0+}^{\frac{1}{2}-H}s^{H-\frac{1}{2}}D_{0+}^{2H}h, \quad \hbox{if}\ H\leq \frac{1}{2},\label{equ-1-4}
\end{align}
for all $h\in I^{H+\frac{1}{2}}_{0+}(L^2([0,T]))$, where $h^\prime$ is the derivative of $h$ if $h$ is absolutely continuous.
For the case $H\leq\frac{1}{2}$, if $h$ is absolutely continuous, we can apply (10.6) in \cite{SKM} to get
\begin{equation}\label{equ-1-5}
\mathcal{K}_H^{-1}h=c_H^{-1}s^{H-\frac{1}{2}}I_{0+}^{\frac{1}{2}-H}s^{\frac{1}{2}-H}h^\prime.
\end{equation}

The following lemma, combined with 
the statements in the latter half of Lemma 
\ref{isom}, ensures that $ \th_2 $ in \eqref{h-2-1} is well-defined. 
\begin{lemma}\label{inverse}
We have
\begin{equation}
\int_0^\cdot h_2(s, X_s,Y_s)ds\in I^{H+\frac{1}{2}}_{0+}(L^2([0,T])) \quad \mbox{ almost surely.}
\end{equation}
\end{lemma}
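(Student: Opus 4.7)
The plan is to split by $H\le \frac12$ versus $H>\frac12$, because $H+\frac12$ crosses $1$ and the available regularity of the integrand $f(s):=h_2(s,X_s,Y_s)$ interacts differently with the fractional integral in each case. In both cases I would set $F(t)=\int_0^t f(s)\,ds$ and use the linear growth \eqref{linear} together with continuity of $X$ and $Y$ from Theorem \ref{holder} to see that $f$ is bounded on $[0,T]$ a.s., hence in $L^2([0,T])$, and that $F=I^1_{0+}f$ with $F(0)=0$.

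For $H\le \frac12$, since $H+\frac12\le 1$, I would use the composition rule \eqref{comp-1} in the form $I^1_{0+}=I^{H+\frac12}_{0+}\,I^{\frac12-H}_{0+}$. By property (i) in Section \ref{fid}, $I^{\frac12-H}_{0+}$ maps $L^2([0,T])$ into $L^{1/H}([0,T])$, which embeds into $L^2([0,T])$ on the bounded interval since $1/H\ge 2$. Therefore $F=I^{H+\frac12}_{0+}\bigl(I^{\frac12-H}_{0+}f\bigr)\in I^{H+\frac12}_{0+}(L^2([0,T]))$ a.s.

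For $H>\frac12$, the strategy is to show that $f$ itself already lies in $I^{H-\frac12}_{0+}(L^2([0,T]))$, so that composition yields $F=I^1_{0+}f=I^{H+\frac12}_{0+}\bigl(I^{H-\frac12}_{0+}\bigr)^{-1}f\in I^{H+\frac12}_{0+}(L^2)$. To do this I would use property (iii) from Section \ref{fid}: it suffices to show that $f\in C^{\beta}([0,T])$ for some $\beta>H-\frac12$. Using Lipschitz continuity of $h_2$ in $(x,y)$ and H\"older continuity \eqref{Hold} in $t$ from Assumption \ref{a1}(b)(i), together with the path regularity $X\in C^{\frac12-\epsilon}$, $Y\in C^{H-\epsilon}$ from Theorem \ref{holder}, one obtains
\[
|f(t)-f(s)|\le L|t-s|^\gamma+L\|X\|_{T,\frac12-\epsilon}|t-s|^{\frac12-\epsilon}+L\|Y\|_{T,H-\epsilon}|t-s|^{H-\epsilon},
\]
so $f\in C^{\beta}([0,T])$ with $\beta=\min\{\gamma,\tfrac12-\epsilon,H-\epsilon\}$. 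Picking $\epsilon<\min\{1-H,\tfrac12\}$ small enough and invoking the standing assumption $\gamma>H-\frac12$ gives $\beta>H-\frac12$, and (iii) then yields $f\in I^{H-\frac12}_{0+}(L^2([0,T]))$.

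The main technical point is the $H>\frac12$ case: the whole argument pivots on the fact that the drift $h_2$ is H\"older-in-$t$ with exponent $\gamma$ \emph{strictly} larger than $H-\tfrac12$, which is exactly what Assumption \ref{a1}(b)(i) was tailored to provide; without it one could only reach exponent $H-\epsilon$ from the $Y$-regularity, which is not enough to beat $H-\tfrac12$ once we account for the $\epsilon$. The rest is bookkeeping with the composition rule \eqref{comp-1} and the embeddings (i) and (iii).
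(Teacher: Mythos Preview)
Your proposal is correct and follows essentially the same route as the paper: the same case split on $H$, the same use of the composition rule \eqref{comp-1} to write $I^1_{0+}=I^{H+\frac12}_{0+}I^{\frac12-H}_{0+}$ together with embedding (i) for $H\le\tfrac12$, and the same appeal to (iii) via H\"older regularity of $s\mapsto h_2(s,X_s,Y_s)$ for $H>\tfrac12$. One small remark: your closing motivational comment is slightly off---without the $t$-H\"older hypothesis on $h_2$ you would not get exponent $H-\epsilon$ but rather no H\"older control on $f$ at all (the term $h_2(t,X_s,Y_s)-h_2(s,X_s,Y_s)$ would be unbounded), and in any case $H-\epsilon>H-\tfrac12$ whenever $\epsilon<\tfrac12$; this does not affect the proof itself.
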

\begin{proof} The case of $H=\frac12$ is trivial.

For the case of $H<\frac{1}{2}$, by \eqref{comp-1} we have
\[
\int_0^\cdot h_2(s, X_s,Y_s)ds=I^{1}_{0^+}h_2(\cdot,X_\cdot,Y_\cdot)=I^{H+\frac12}_{0^+}I^{\frac12-H}_{0^+}h_2(\cdot,X_\cdot,Y_\cdot).
\]
Note that \eqref{e-2-4} in the proof of Theorem \ref{holder} and the linear growth condition \eqref{linear} on $h_2$ imply that $h_2(\cdot, X_\cdot,Y_\cdot)$ is in $L^2([0,T])$.
Using  (i) in Section \ref{fid} with $\alpha=\frac12-H$, $p=2$ and $q=\frac{p}{1-\alpha p}=\frac1H>2$, we can show that $I^{\frac12-H}_{0^+}h_2(\cdot,X_\cdot,Y_\cdot)\in L^q([0,T])\subset  L^2([0,T])$ which implies the result.

For the case of $H>\frac{1}{2}$, by \eqref{comp-1}, we need $h_2(\cdot, X_\cdot,Y_\cdot)\in I^{H-\frac{1}{2}}_{0+}(L^2([0,T]))$, which is implied by (iii) with $\alpha=H-\frac12<\gamma<\frac12$ in Section \ref{fid} and the fact that $h_2(\cdot, X_\cdot,Y_\cdot)\in C^{\gamma}([0,T])$ from the result in Theorem \ref{holder}.
\end{proof}
Since $\int_0^\cdot h_2(s, X_s,Y_s)ds\in I^{H+\frac{1}{2}}_{0+}(L^2([0,T]))$ almost surely and the operator $\mathcal{K}_H^{-1}$ preserves adaptability, there exist adapted stochastic processes $\tilde{h}_1,\, \tilde{h}_2\in L^2([0,T])$ such that
\begin{equation}\label{h-2}
\tilde{h}_2(t)=\mathcal{K}_H^{-1}\left(\int_0^\cdot h_2(s, X_s,Y_s)ds\right)(t),
\end{equation}
and
\begin{equation}\label{h-1-2-a}
\rho \,\tilde{h}_2(t)+\sqrt{1-\rho^2}\,\tilde{h}_1(t)=h_1(t, X_t,Y_t).
\end{equation}

Similarly, we have
\begin{lemma}\label{wdn2}
The two functions $\hat h_1$ and $\hat h_2$ determined by the equations \eqref{hat-2} and \eqref{hat-1} are well-defined.
\end{lemma}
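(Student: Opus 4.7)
The plan is to mirror the proof of Lemma \ref{inverse} with the stochastic trajectory $(X_s,Y_s)$ replaced by the deterministic modal path $(x_s^{x,y},y_s^{x,y})$. Observe that once $\hat h_2$ has been produced, \eqref{hat-2} defines $\hat h_1$ algebraically as $\hat h_1(t)=\frac{1}{\sqrt{1-\rho^2}}\big(\bar h_1(t)-\rho\hat h_2(t)\big)$, so the whole matter reduces to showing that $\int_0^\cdot\bar h_2(s)\,ds$ lies in $I_{0+}^{H+1/2}(L^2([0,T]))$; the inverse Volterra operator $\mathcal{K}_H^{-1}$ can then be applied via Lemma \ref{isom} to produce $\hat h_2\in L^2([0,T])$.

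The first step is a regularity analysis of the modal path. From \eqref{eqn:EXtxy}--\eqref{m22}, $x_t^{x,y}$ and $y_t^{x,y}$ are explicit linear combinations of $t/T$, $t^{H+1/2}$, $R_H(t,T)$ and $\int_0^t K_H(T,s)\,ds$. Each of these is continuous on $[0,T]$, so the modal path is continuous and in particular bounded. Combining this with the linear growth bound \eqref{linear} gives $\bar h_2\in L^\infty([0,T])\subset L^2([0,T])$, and likewise for $\bar h_1$, so that $\hat h_1$ will automatically be well defined as soon as $\hat h_2$ is.

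The split into cases then follows the template of Lemma \ref{inverse}. For $H=\tfrac12$, $\mathcal{K}_H$ is the identity and there is nothing to prove. For $H<\tfrac12$, I would use the composition formula \eqref{comp-1} to write
\[
\int_0^\cdot\bar h_2(s)\,ds=I_{0+}^{1}\bar h_2=I_{0+}^{H+1/2}\,I_{0+}^{1/2-H}\bar h_2,
\]
and then invoke property (i) of Section \ref{fid} with $\alpha=\tfrac12-H$, $p=2$, $q=1/H>2$ to conclude $I_{0+}^{1/2-H}\bar h_2\in L^q([0,T])\subset L^2([0,T])$, hence $\int_0^\cdot \bar h_2(s)\,ds\in I_{0+}^{H+1/2}(L^2([0,T]))$. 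For $H>\tfrac12$, the required inclusion is $\bar h_2\in I_{0+}^{H-1/2}(L^2([0,T]))$, which by property (iii) of Section \ref{fid} follows from $\bar h_2\in C^\beta([0,T])$ for some $\beta>H-\tfrac12$. The Hölder exponent of $\bar h_2$ is controlled by the $\gamma$-Hölder regularity of $h_2$ in its time variable (with $\gamma>H-\tfrac12$ by Assumption \ref{a1}) and by the Hölder regularity of the modal path composed with the Lipschitz bound \eqref{Lip}; so one takes $\beta=\min(\gamma,\lambda)$, where $\lambda$ is the Hölder exponent of the modal path.

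The main obstacle is therefore verifying, in the case $H>\tfrac12$, that the modal path is Hölder continuous of some order $\lambda>H-\tfrac12$. This reduces to showing that $t\mapsto R_H(t,T)$, $t\mapsto t^{H+1/2}$ and $t\mapsto\int_0^t K_H(T,s)\,ds$ are all at least $\lambda$-Hölder on $[0,T]$ for such a $\lambda$. For $H>\tfrac12$ the kernel $K_H(T,\cdot)$ is bounded near $s=T$ (its singular factor $(T-s)^{H-1/2}$ vanishes there), so the primitive $\int_0^\cdot K_H(T,s)\,ds$ is Lipschitz; the functions $t\mapsto t^{H+1/2}$ and $t\mapsto R_H(t,T)=\tfrac12(t^{2H}+T^{2H}-(T-t)^{2H})$ are at least $H$-Hölder on $[0,T]$, with the worst exponent $2H>1$ appearing only in the smooth regime. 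Thus $\lambda\ge H>H-\tfrac12$, and $\beta=\min(\gamma,H)>H-\tfrac12$ delivers the required inclusion through property (iii). This completes the well-definedness of $\hat h_2$, and thence of $\hat h_1$ through \eqref{hat-2}.
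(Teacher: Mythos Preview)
Your overall strategy coincides with the paper's: reduce to $\int_0^\cdot \bar h_2(s)\,ds \in I_{0+}^{H+1/2}(L^2([0,T]))$, treat $H\le\tfrac12$ via the composition formula exactly as in Lemma \ref{inverse}, and for $H>\tfrac12$ show $\bar h_2$ is H\"older of some order exceeding $H-\tfrac12$ by combining the $\gamma$-H\"older regularity of $h_2$ in $t$ with H\"older regularity of the modal path.

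There is, however, a concrete error in your regularity analysis of the modal path when $H>\tfrac12$. You assert that $K_H(T,\cdot)$ is bounded (because the factor $(T-s)^{H-1/2}$ vanishes at $s=T$) and hence that $t\mapsto\int_0^t K_H(T,s)\,ds$ is Lipschitz. This overlooks the singularity at $s=0$: from \eqref{K-H-1} one has $K_H(T,s)\sim c\,s^{1/2-H}$ as $s\to 0^+$, which is unbounded for $H>\tfrac12$. Consequently the primitive behaves like $t^{3/2-H}$ near $0$ and is \emph{not} Lipschitz; your conclusion $\lambda\ge H$ for the modal path's H\"older exponent is therefore unjustified. The paper handles this cleanly by the probabilistic identity $\int_0^t K_H(T,u)\,du=\tE{B_t\,\tilde B_T^H}$ together with Cauchy--Schwarz, giving
\[
\Bigl|\int_s^t K_H(T,u)\,du\Bigr|=\bigl|\tE{(B_t-B_s)\tilde B_T^H}\bigr|\le T^H|t-s|^{1/2},
\]
and similarly $|R_H(t,T)-R_H(s,T)|\le T^H|t-s|^H$. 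This yields that the modal path is $\alpha$-H\"older for every $\alpha<\min\{H,\tfrac12\}$, which is weaker than your claimed $\lambda\ge H$ but still exceeds $H-\tfrac12$ and hence suffices for property (iii) of Section \ref{fid}. With this correction your argument goes through and matches the paper's.
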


\begin{proof}
Note that by applying the Cauchy-Schwarz
inequality we have for any $s,t\in[0,T]$
\begin{eqnarray*}
\left|\int_0^tK_H(T,u)du-\int_0^sK_H(T,u)du\right|&=&|\mathbb{E}[(B_t-B_s)B_T^H]|\\
&\leq&\left(\mathbb{E}[|B_t-B_s|^2]\right)^{\frac{1}{2}}\left(\mathbb{E}[|B^H_T|^2]\right)^{\frac{1}{2}}\\
&\leq& T^H|t-s|^{\frac12},
\end{eqnarray*}
and 

\begin{eqnarray*}
&& |R_H(t,T)-R_H(s,T)| = |\mathbb{E}[(B_t^H-B_s^H)B_T^H]| \\
&\leq& \left(\mathbb{E}[|B_t^H-B_s^H|^2]\right)^{\frac{1}{2}}\left(\mathbb{E}[|B^H_T|^2]\right)^{\frac{1}{2}}
= T^H|t-s|^H.
\end{eqnarray*}

It follows from \eqref{eqn:EXtxy} and \eqref{eqn:EYtxy} that the expectations $\tEof{X_t^{x,y}}$ and $\tEof{Y_t^{x,y}}$
are $\alpha$-H\"{o}lder continuous in $t$ of any order $\alpha<\min\{H,\frac12\}$. Hence, as in the proof of Lemma \ref{inverse}, we can show that $\int_0^\cdot h_2(s, \tE{X_s^{x,y}},\tE{Y_s^{x,y}})ds\in I^{H+\frac{1}{2}}_{0^+}(L^2([0,T]))$. We conclude that the two deterministic functions $\hat{h}_1$ and $\hat{h}_2$ are well-defined.
\end{proof}

\subsection{The conditional expectation of Gaussian random vectors and Gaussian bridges} $\mbox{}$

Let $\mathbf{X}=(X_1,\cdots,X_n)^\prime$ and $\mathbf{Y}=(Y_1,\cdots,Y_m)^\prime$ be joint Gaussian random vectors and $\mathbf{Z}=(X_1,\cdots,X_n,Y_1,\cdots,Y_m)^\prime$. Denote the expectations of $\mathbf{X}$, $\mathbf{Y}$ and the covariance matrix for $\mathbf{Z}$ by
\[
\mathbb{E}[\mathbf{X}]=\mu_\mathbf{X},\ \mathbb{E}[\mathbf{Y}]=\mu_\mathbf{Y}
\]
and
\[
\mathbf{\Sigma}=Cov\left[\left(\begin{array}{c}\mathbf{X}\\ \mathbf{Y}\end{array}\right)\right]=\left(\begin{array}{cc}\mathbf{\Sigma_{XX}} &\mathbf{\Sigma_{XY}}\\ \mathbf{\Sigma_{YX}}&\mathbf{\Sigma_{YY}}\end{array}\right)
\]

The following lemma gives the conditional distribution of Gaussian random vectors.
\begin{lemma}\label{conditional}
Suppose that the covariance matrix $\mathbf{\Sigma}$ is positive definite. Then, the conditional distribution of $\mathbf{X}$ given that $\mathbf{Y}=\mathbf{y}$ is $n$-dimensional Gaussian with expectation
\[
\mathbb{E}[\mathbf{X}|\mathbf{Y}=\mathbf{y}]=\mu_\mathbf{X}+\mathbf{\Sigma_{XY}}\mathbf{\Sigma_{YY}}^{-1}(\mathbf{y}-\mu_\mathbf{Y})
\]
and covariance matrix
\[
Cov[\mathbf{X}|\mathbf{Y}=\mathbf{y}]=\mathbf{\Sigma_{XX}}-\mathbf{\Sigma_{XY}}\mathbf{\Sigma_{YY}}^{-1}\mathbf{\Sigma_{YX}}.
\]
Moreover, the Gaussian vector $\mathbf{X}$ has the following decomposition
\[
\bX = \mu_\mathbf{X}+\mathbf{\Sigma_{XY}}\mathbf{\Sigma_{YY}}^{-1}(\mathbf{Y}-\mu_\mathbf{Y})+\mathbf{V},
\]
where the random vector $\mathbf{V}$ is $n$-dimensional Gaussian with zero expectation and the following covariance matrix
\[
Cov[\mathbf{V}]=\mathbf{\Sigma_{XX}}-\mathbf{\Sigma_{XY}}\mathbf{\Sigma_{YY}}^{-1}\mathbf{\Sigma_{YX}}.
\]
\subsection{Some estimates on $\tilde{h}_i$ and $\hat{h}_i$, $i=1, 2$} $\mbox{}$

We will give some important estimates on $\tilde{h}_i$ and $\hat{h}_i$, $i=1, 2$, in the cases $H>\frac12$ and $H\leq \frac12$ respectively.
\begin{lemma}\label{lem-7}
\begin{enumerate}
\item
In the case of $H>\frac12$, there exists a constant $C$ depending on $x_0$, $y_0$, $\rho$ and the constants $L$ in \eqref{Hold} and $K$ in \eqref{linear} such that, for any $0<\epsilon<1-H$,
\begin{itemize}
\item[(a)] 
\begin{eqnarray}
&&\int_0^T(|\tilde{h}_1(t)|^2+|\tilde{h}_2(t)|^2)dt\nonumber\\
&\leq& C(1+\Vert \tilde{B}\Vert^2_\infty+\Vert \tilde{W}\Vert^2_\infty+\Vert \tilde{B}^H\Vert^2_\infty+\Vert\tilde{B}\Vert^2_{\frac12-\epsilon}+\Vert\tilde{W}\Vert^2_{\frac12-\epsilon}+\Vert\tilde{B}^H\Vert^2_{\frac12-\epsilon})T^{2-2H};\nonumber\\
\end{eqnarray}

\item[(b)] 
\begin{equation}\label{Jun-2-43}
\int_0^T(|\hat{h}_1(t)|^2+|\hat{h}_2(t)|^2)dt\leq 
C(1+|x-x_0|^2+|y-y_0|^2)T^{2-2H}+C|y-y_0|^2T^{3-4H};
\end{equation}
\item[(c)]
 \begin{eqnarray}\label{Jun-2-44}
&&\int_0^T(|\tilde{h}_1(t)-\hat{h}_1(t)|^2+|\tilde{h}_2(t)-\hat{h}_2(t)|^2)dt\nonumber\\
&\leq&C(1+\Vert\tilde{B}\Vert_{\frac12-\epsilon}^2+\Vert\tilde{W}\Vert^2_{\frac12-\epsilon}+\Vert\tilde{B}^H\Vert^2_{\frac12-\epsilon}+\Vert \tilde{B}\Vert^2_{\infty}+\Vert \tilde{W}\Vert^2_{\infty}+\Vert \tilde{B}^H\Vert^2_{\infty})T^{2-2H}\nonumber\\
&&+C(|x-x_0|^2+|y-y_0|^2)T^{2-2H}+C|y-y_0|^2T^{3-4H}.
\end{eqnarray}

\end{itemize}
\item In the case of $H\leq\frac12$, there exists a constant $C$ depending on $x_0$, $y_0$, $\rho$ and the constants $L$ in \eqref{Hold} and $K$ in \eqref{linear} such that, for any $0<\epsilon<1-H$,

\begin{itemize}
\item[(e)]
\bea
\int_0^T\vert(\tilde{h}_1(s)\vert^2+\vert\tilde{h}_2(s)\vert^2)ds&\leq& C\left(1+\Vert \tilde{B}\Vert_\infty^2+\Vert \tilde{W}\Vert_\infty^2+\Vert \tilde{B}^H\Vert_\infty^2\right)T\nonumber\\
&\leq&C\left(1+\Vert \tilde{B}\Vert_\infty^2+\Vert \tilde{W}\Vert_\infty^2+\Vert \tilde{B}^H\Vert_\infty^2\right)T^{2H}.
\eea
\item[(f)]
\begin{equation}
\int_0^T(\vert \hat{h}_1(t)\vert^2+\vert \hat{h}_2(t)\vert^2)dt\leq C(1+|x-x_0|^2+|y-y_0|^2)T^{2H}.
\end{equation}
\item[(g)]
\bea
&&\int_0^T(|\tilde{h}_1(t)-\hat{h}_1(t)|^2+|\tilde{h}_2(t)-\hat{h}_2(t)|^2)dt\nonumber\\
&\leq&C(\Vert \tilde{B}\Vert_{\infty}^2+\Vert \tilde{W}\Vert_{\infty}^2+\Vert \tilde{B}^H\Vert_{\infty}^2+|x-x_0|^2+|y-y_0|^2)T^{2H}.
\eea

\end{itemize}
\end{enumerate}
\end{lemma}
\begin{proof} \underline{Case of $H>\frac12$:} We choose an arbitrary small $0<\epsilon<1-H$ (note that $1-H<\frac12$).

In the following, we will use $C$ to denote a generic constant which is dependent on $x_0$, $y_0$, $\rho$ and the constants $L$ in \eqref{Hold} and $K$ in \eqref{linear} but  independent of $T$ and $(x,y)$.

For any $s,\, t\in[0,T]$, by \eqref{til-X}, \eqref{til-Y}, the linear growth condition \eqref{linear}, the H\"{o}lder continuity condition \eqref{Hold} and the Lipschitz condition \eqref{Lip} on $h_i$, $i=1,2$, we have
\begin{eqnarray}\label{ap-2-44}
|h_i(t,X_t,Y_t)|&\leq & C(1+\Vert \tilde{B}\Vert_\infty+\Vert \tilde{W}\Vert_\infty+\Vert \tilde{B}^H\Vert_\infty),
\end{eqnarray}
and
\begin{eqnarray}\label{ap-2-45}
&&|h_2(t,X_t,Y_t)-h_2(s,X_s,Y_s)|\nonumber\\
&\leq& C|t-s|^\gamma+C(\Vert\tilde{B}\Vert_{\frac12-\epsilon}+\Vert\tilde{W}\Vert_{\frac12-\epsilon}+\Vert\tilde{B}^H\Vert_{\frac12-\epsilon})|t-s|^{\frac12-\epsilon}.
\end{eqnarray}
Considering \eqref{Jan-2-14}, we get by \eqref{ap-2-44} and \eqref{ap-2-45}\begin{eqnarray}\label{ap-2-46}
|a(t)|&\leq& C(1+\Vert \tilde{B}\Vert_\infty+\Vert \tilde{W}\Vert_\infty+\Vert \tilde{B}^H\Vert_\infty)t^{\frac12-H}+C\int_0^t(t-s)^{\gamma-H-\frac12}ds\nonumber\\
&&+C(\Vert\tilde{B}\Vert_{\frac12-\epsilon}+\Vert\tilde{W}\Vert_{\frac12-\epsilon}+\Vert\tilde{B}^H\Vert_{\frac12-\epsilon})\int_0^t(t-s)^{-H-\epsilon}ds\nonumber\\
&\leq&C(1+\Vert \tilde{B}\Vert_\infty+\Vert \tilde{W}\Vert_\infty+\Vert \tilde{B}^H\Vert_\infty)t^{\frac12-H}\nonumber\\
&&+C(1+\Vert\tilde{B}\Vert_{\frac12-\epsilon}+\Vert\tilde{W}\Vert_{\frac12-\epsilon}+\Vert\tilde{B}^H\Vert_{\frac12-\epsilon}),
\end{eqnarray}
and by \eqref{ap-2-44} and changing of variables we have
\begin{eqnarray}\label{ap-2-47}
|b(t)|&\leq&C(1+\Vert \tilde{B}\Vert_\infty+\Vert \tilde{W}\Vert_\infty+\Vert \tilde{B}^H\Vert_\infty)t^{\frac12-H}\int_0^1\frac{u^{\frac12-H}-1}{(1-u)^{H+\frac12}}du\nonumber\\
&=&\alpha_HC(1+\Vert \tilde{B}\Vert_\infty+\Vert \tilde{W}\Vert_\infty+\Vert \tilde{B}^H\Vert_\infty)t^{\frac12-H},
\end{eqnarray}
where $\alpha_H = \int_0^1\frac{u^{\frac12-H}-1}{(1-u)^{H+\frac12}}du$ is a finite constant. 

Then, \eqref{Jan-2-14}, \eqref{ap-2-46} and \eqref{ap-2-47} imply 
\begin{eqnarray}\label{ap-2-48}
|\tilde{h}_2(t)|&\leq& C(1+\Vert \tilde{B}\Vert_\infty+\Vert \tilde{W}\Vert_\infty+\Vert \tilde{B}^H\Vert_\infty)t^{\frac12-H}\nonumber\\
&&+C(1+\Vert\tilde{B}\Vert_{\frac12-\epsilon}+\Vert\tilde{W}\Vert_{\frac12-\epsilon}+\Vert\tilde{B}^H\Vert_{\frac12-\epsilon}).
\end{eqnarray}
From \eqref{h-1-2-a}, \eqref{ap-2-44} and \eqref{ap-2-48}, we obtain
\begin{eqnarray}\label{ap-2-49}
|\tilde{h}_1(t)|&\leq&C(1+\Vert \tilde{B}\Vert_\infty+\Vert \tilde{W}\Vert_\infty+\Vert \tilde{B}^H\Vert_\infty+\Vert\tilde{B}\Vert_{\frac12-\epsilon}+\Vert\tilde{W}\Vert_{\frac12-\epsilon}+\Vert\tilde{B}^H\Vert_{\frac12-\epsilon})\nonumber\\
&&+ C(1+\Vert \tilde{B}\Vert_\infty+\Vert \tilde{W}\Vert_\infty+\Vert \tilde{B}^H\Vert_\infty)t^{\frac12-H}.
\end{eqnarray}
Thus, by \eqref{ap-2-48} and \eqref{ap-2-49}, one can obtain
\begin{eqnarray}\label{ap-2-52-1}
&&\int_0^T(|\tilde{h}_1(t)|^2+|\tilde{h}_2(t)|^2)dt\nonumber\\
&\leq&C(1+\Vert \tilde{B}\Vert^2_\infty+\Vert \tilde{W}\Vert^2_\infty+\Vert \tilde{B}^H\Vert^2_\infty+\Vert\tilde{B}\Vert^2_{\frac12-\epsilon}+\Vert\tilde{W}\Vert^2_{\frac12-\epsilon}+\Vert\tilde{B}^H\Vert^2_{\frac12-\epsilon})T\nonumber\\
&&+C(1+\Vert \tilde{B}\Vert^2_\infty+\Vert \tilde{W}\Vert^2_\infty+\Vert \tilde{B}^H\Vert^2_\infty)T^{2-2H}\nonumber\\
&\leq&C(1+\Vert \tilde{B}\Vert^2_\infty+\Vert \tilde{W}\Vert^2_\infty+\Vert \tilde{B}^H\Vert^2_\infty+\Vert\tilde{B}\Vert^2_{\frac12-\epsilon}+\Vert\tilde{W}\Vert^2_{\frac12-\epsilon}+\Vert\tilde{B}^H\Vert^2_{\frac12-\epsilon})T^{2-2H}.\nonumber\\
\end{eqnarray}

Now let us prove Part $(b)$. From \eqref{m11} - \eqref{m22},  it implies the following estimates
\begin{eqnarray}
&&|m_{11}(t;T)|\leq \frac{1+\rho\rho_H}{\bar{\rho}_H^2},\label{est-m11}\\
&&|m_{12}(t;T)|\leq \frac{\rho+\rho_H}{\bar{\rho}_H^2\ T^{H-\frac12}},\\
&&|m_{21}(t;T)|\leq \frac{2\rho_H\ T^{H-\frac12}}{\bar{\rho}_H^2},\\
&&|m_{22}(t;T)|\leq \frac{1+\rho_H^2}{\bar{\rho}_H^2}.\label{est-m22}
\end{eqnarray}
Note that in the case of $H>\frac12$, we further have 
\begin{equation}
|m_{21}(t;T)|\leq \frac{2\rho_H}{\bar{\rho}_H^2}.\label{est-m12-1}
\end{equation} 
Moreover, for all $s, t\in[0,T]$, there exists a constant $C$ depending on $\rho$ and $H$ such that
\begin{eqnarray}\label{est-m11st}
|m_{11}(t;T)-m_{11}(s;T)|&\leq& C \left(\frac{|t-s|}{T}+\frac{|\int_s^tK_H(T,u)du|}{T^{H+\frac12}}\right)\nonumber\\
&\leq& C \left(\frac{|t-s|}{T}+\frac{|\tE{B_T^H(B_t-B_s)}|}{T^{H+\frac12}}\right)\nonumber\\
&\leq&C \left(\frac{|t-s|}{T}+\frac{|t-s|^{\frac12}}{T^{\frac12}}\right)\leq C\frac{|t-s|^{\frac12}}{T^{\frac12}},
\end{eqnarray}
\bea
|m_{12}(t;T)-m_{12}(s;T)|&\leq& C \left(\frac{|t-s|}{T^{H+\frac12}}+\frac{|\int_s^tK_H(T,u)du|}{T^{2H}}\right)\nonumber\\
&\leq&C \left(\frac{|t-s|}{T^{H+\frac12}}+\frac{|t-s|^{\frac12}}{T^H}\right)\leq C\frac{|t-s|^{\frac12}}{T^{H}},
\eea
\bea
|m_{21}(t;T)-m_{21}(s;T)|&\leq& C \left(\frac{|t^{H+\frac12}-s^{H+\frac12}|}{T}+\frac{|R_H(t,T)-R_H(s,T)|}{T^{H+\frac12}}\right)\nonumber\\
&\leq& C\left(\frac{|t-s|}{T^{\frac32-H}}+\frac{|\tE{(B_T^H(B_t^H-B_s^H)}|}{t^{H+\frac12}}\right)\nonumber\\
&\leq&C\left(\frac{|t-s|}{T^{\frac32-H}}+\frac{|t-s|^{H}}{T^{\frac12}}\right)\leq C\frac{|t-s|^{H}}{T^{\frac12}}\leq C\frac{|t-s|^{\frac12}}{T^{\frac12}},\nonumber\\
\eea
and 
\bea\label{est-m22st}
|m_{22}(t;T)-m_{22}(s;T)|&\leq& C \left(\frac{|t^{H+\frac12}-s^{H+\frac12}|}{T^{H+\frac12}}+\frac{|R_H(t,T)-R_H(s,T)|}{T^{2H}}\right)\nonumber\\
&\leq& C\left(\frac{|t-s|}{T}+\frac{|t-s|^H}{T^H}\right)\leq C\frac{|t-s|^H}{T^H}.
\eea

Similar to \eqref{Jan-2-14}, we can write
\begin{equation}\label{hat-h-ab}
\hat{h}_2(t)=\frac{c_H^{-1}}{\Gamma(\frac32-H)}(\hat{a}(t)+\hat{b}(t)),
\end{equation}
where
\[
\hat{a}(t)=t^{\frac12-H}\bar{h}_2(t)+(H-\frac12)\int_0^t\frac{\bar{h}_2(t)-  \bar{h}_2(s)}{(t-s)^{H+\frac12}}ds,
\]
and
\[
\hat{b}(t)=(H-\frac12)t^{H-\frac12}\int_0^t\frac{(t^{\frac12-H}-s^{\frac12-H})\bar{h}_2(s)}{(t-s)^{H+\frac12}}ds.
\]
By the linear growth condition \eqref{linear}, \eqref{eqn:EXtxy} - \eqref{m22} and \eqref{est-m11} - \eqref{est-m12-1}, we can see
\begin{equation}\label{ap-2-51}
|\bar{h}_i(t)|\leq C(1+|x-x_0|+|y-y_0|+|y-y_0|T^{\frac12-H}),\ i=1,2.
\end{equation}

From the H\"{o}lder continuity condition \eqref{Hold}, the Lipschitz condition \eqref{Lip} on $h_2$, the definition of $\bar{h}_2$, \eqref{eqn:EXtxy} - \eqref{eqn:EYtxy} and \eqref{est-m11st} - \eqref{est-m22st}, it is easy to show
\begin{equation}\label{ap-2-56-1}
|\bar{h}_2(t)-\bar{h}_2(s)|\leq C|t-s|^\gamma+C|x-x_0|\frac{|t-s|^{\frac12}}{T^{\frac12}}+C|y-y_0|\left(\frac{|t-s|^{\frac12}+|t-s|^H}{T^H}\right).
\end{equation}
Thus, analogue to the proofs of \eqref{ap-2-46} and \eqref{ap-2-47}, we obtain
\bea\label{bar-a}
|\hat{a}(t)|&\leq&C(1+|x-x_0|+|y-y_0|+|y-y_0|T^{\frac12-H})t^{\frac12-H}+Ct^{\gamma-H+\frac12}\nonumber\\
&&+C|x-x_0|\frac{t^{1-H}}{T^{\frac12}}+C|y-y_0|\left(\frac{t^{1-H}+t^{\frac12}}{T^H}\right),
\eea
and
\bea\label{bar-b}
|\hat{b}(t)|&\leq&C(1+|x-x_0|+|y-y_0|+|y-y_0|T^{\frac12-H})t^{\frac12-H}.
\eea
So, from \eqref{hat-h-ab}, \eqref{bar-a} and \eqref{bar-b} it implies
\bea\label{Jun-hat-2}
|\hat{h}_2(t)|&\leq&C(1+|x-x_0|+|y-y_0|+|y-y_0|T^{\frac12-H})t^{\frac12-H}+Ct^{\gamma-H+\frac12}\nonumber\\
&&+C|x-x_0|\frac{t^{1-H}}{T^{\frac12}}+C|y-y_0|\left(\frac{t^{1-H}+t^{\frac12}}{T^H}\right).
\eea
Then, by \eqref{hat-h-ab}, \eqref{ap-2-51} and \eqref{Jun-hat-2} one has
\bea\label{Jun-hat-1}
|\hat{h}_1(t)|&\leq&C(1+|x-x_0|+|y-y_0|+|y-y_0|T^{\frac12-H})(1+t^{\frac12-H})+Ct^{\gamma-H+\frac12}\nonumber\\
&&+C|x-x_0|\frac{t^{1-H}}{T^{\frac12}}+C|y-y_0|\left(\frac{t^{1-H}+t^{\frac12}}{T^H}\right).
\eea

Therefore, taking into account of \eqref{Jun-hat-2} and \eqref{Jun-hat-1}, we can prove
\beaa
&&\int_0^T(|\hat{h}_1(t)|^2+|\hat{h}_2(t)|^2)dt\nonumber\\
&\leq &C(1+|x-x_0|^2+|y-y_0|^2+|y-y_0|^2T^{1-2H})(T+T^{2-2H})+CT^{2\gamma-2H+2}\nonumber\\
&&+C|x-x_0|^2T^{2-2H}+C|y-y_0|^2(T^{3-4H}+T^{2-2H})\nonumber\\
&\leq& C(1+|x-x_0|^2+|y-y_0|^2)T^{2-2H}+C|y-y_0|^2T^{3-4H},
\eeaa
since $\gamma>0$.

Now let us prove Part (c). By the definitions of $\tilde{h}_2$ and $\hat{h}_2$ in \eqref{h-2} and \eqref{hat-2}, and \eqref{equ-1-3} we can write
\begin{eqnarray}\label{ap-2-61}
\tilde{h}_2(t)-\hat{h}_2(t)&=&\frac{c_H^{-1}}{\Gamma(\frac32-H)}(\bar{a}(t)+\bar{b}(t)),
\end{eqnarray}
where
\begin{eqnarray*}
\bar{a}(t)&=&\left(H-\frac12\right)\int_0^t\frac{(h_2(t,X_t,Y_t)-\bar{h}_2(t))-(h_2(s,X_s,Y_s)-\bar{h}_2(s))}{(t-s)^{H+\frac12}}ds\nonumber\\
&&+
t^{\frac12-H}(h_2(t,X_t,Y_t)-\bar{h}_2(t)),
\end{eqnarray*}
and
\[
\bar{b}(t)=\left(H-\frac12\right)t^{H-\frac12}\int_0^t\frac{(t^{\frac12-H}-s^{\frac12-H})(h_2(s,X_s,Y_s)-\bar{h}_2(s))}{(t-s)^{H+\frac12}}ds.
\]
For any $t\in[0,T]$, from the Lipschitz condition on $h_i$, $i=1,2$, \eqref{til-X}, \eqref{til-Y}, \eqref{eqn:EXtxy} and \eqref{est-m11} - \eqref{est-m12-1}, it implies
\begin{eqnarray}\label{ap-2-59}
&&|h_i(t,X_t,Y_t)-\bar{h}_i(t)|\nonumber\\
&\leq&C\left(|X_t-x_0|+|Y_t-y_0|+(|m_{11}(t;T)|+|m_{21}(t;T)|)|x-x_0|\right.\nonumber\\
&&\left.+(|m_{12}(t;T)|+|m_{22}(t;T)|)|y-y_0|\right)\nonumber\\
&\leq & C(\Vert \tilde{B}\Vert_{\infty}+\Vert \tilde{W}\Vert_{\infty}+\Vert \tilde{B}^H\Vert_{\infty})+C(|x-x_0|+|y-y_0|+|y-y_0|T^{\frac12-H}).\nonumber\\
\end{eqnarray}
For any $s,t\in[0,T]$, the inequalities \eqref{ap-2-59}, \eqref{ap-2-45}, \eqref{ap-2-56-1} and the calculations in \eqref{ap-2-46} and \eqref{bar-a} yield
\begin{eqnarray}\label{ap-2-62}
|\bar{a}(t)|&\leq &C(1+\Vert\tilde{B}\Vert_{\frac12-\epsilon}+\Vert\tilde{W}\Vert_{\frac12-\epsilon}+\Vert\tilde{B}^H\Vert_{\frac12-\epsilon})\nonumber\\
&&+C|x-x_0|\frac{t^{1-H}}{T^{\frac12}}+C|y-y_0|\left(\frac{t^{1-H}+t^{\frac12}}{T^H}\right)\nonumber\\
&&+C(\Vert \tilde{B}\Vert_{\infty}+\Vert \tilde{W}\Vert_{\infty}+\Vert \tilde{B}^H\Vert_{\infty}+|x-x_0|+|y-y_0|+|y-y_0|T^{\frac12-H})t^{\frac12-H}.\nonumber\\
\end{eqnarray}
By \eqref{ap-2-59} and a change of variables we can prove that
\begin{eqnarray}\label{ap-2-63}
|\bar{b}(t)|&\leq&  C(\Vert \tilde{B}\Vert_{\infty}+\Vert \tilde{W}\Vert_{\infty}+\Vert \tilde{B}^H\Vert_{\infty}+|x-x_0|+|y-y_0|+|y-y_0|T^{\frac12-H})t^{\frac12-H}.\nonumber\\
\end{eqnarray}

Thus, by \eqref{ap-2-61}-\eqref{ap-2-63},  we have 
\bea\label{ap-2-64}
&&\int_0^T|\tilde{h}_2(t)-\hat{h}_2(t)|^2dt\nonumber\\
&\leq&C(1+\Vert\tilde{B}\Vert_{\frac12-\epsilon}^2+\Vert\tilde{W}\Vert^2_{\frac12-\epsilon}+\Vert\tilde{B}^H\Vert^2_{\frac12-\epsilon})T+C|y-y_0|^2T^{3-4H}\nonumber\\
&&+C(\Vert \tilde{B}\Vert^2_{\infty}+\Vert \tilde{W}\Vert^2_{\infty}+\Vert \tilde{B}^H\Vert^2_{\infty}+|x-x_0|^2+|y-y_0|^2)T^{2-2H}\nonumber\\
&\leq&C(1+\Vert\tilde{B}\Vert_{\frac12-\epsilon}^2+\Vert\tilde{W}\Vert^2_{\frac12-\epsilon}+\Vert\tilde{B}^H\Vert^2_{\frac12-\epsilon}+\Vert \tilde{B}\Vert^2_{\infty}+\Vert \tilde{W}\Vert^2_{\infty}+\Vert \tilde{B}^H\Vert^2_{\infty})T^{2-2H}\nonumber\\
&&+C(|x-x_0|^2+|y-y_0|^2)T^{2-2H}+C|y-y_0|^2T^{3-4H},
\eea
which implies together from \eqref{h-1-2-a}, \eqref{hat-1} and \eqref{ap-2-59}
\begin{eqnarray}\label{ap-2-65}
&&\int_0^T|\tilde{h}_1(t)-\hat{h}_1(t)|^2dt\nonumber\\
&\leq& C(1+\Vert\tilde{B}\Vert_{\frac12-\epsilon}^2+\Vert\tilde{W}\Vert^2_{\frac12-\epsilon}+\Vert\tilde{B}^H\Vert^2_{\frac12-\epsilon}+\Vert \tilde{B}\Vert^2_{\infty}+\Vert \tilde{W}\Vert^2_{\infty}+\Vert \tilde{B}^H\Vert^2_{\infty})T^{2-2H}\nonumber\\
&&+C(|x-x_0|^2+|y-y_0|^2)T^{2-2H}+C|y-y_0|^2T^{3-4H}.
\end{eqnarray}
Hence, by \eqref{ap-2-64} and \eqref{ap-2-65} we obtain \eqref{Jun-2-44}.

\underline{Case of $H\leq\frac12$:} Proof of Part (e): From \eqref{equ-1-5}, \eqref{linear}, \eqref{til-X}, \eqref{til-Y} and changing of variables it implies 
\bea\label{Jun-2-75}
\vert \tilde{h}_2(t)\vert&=&c_H^{-1} t^{H-\frac{1}{2}}\left\vert\int_0^t(t-s)^{-\frac{1}{2}-H}s^{\frac{1}{2}-H}h_2(s, X_s,Y_s)ds\right\vert\nonumber\\
&\leq&C(1+\Vert \tilde{B}\Vert_{\infty}+\Vert \tilde{W}\Vert_{\infty}+\Vert \tilde{B}^H\Vert_{\infty})B\left(\frac12-H,\frac32-H\right)t^{\frac12-H}\nonumber\\
&=&C(1+\Vert \tilde{B}\Vert_{\infty}+\Vert \tilde{W}\Vert_{\infty}+\Vert \tilde{B}^H\Vert_{\infty}).
\eea
From \eqref{h-1-2-a}, \eqref{Jun-2-75}, the linear growth condition \eqref{linear} on $h_1$,  \eqref{til-X} and \eqref{til-Y}, we obtain
\begin{equation}\label{Jun-2-76}
\vert \tilde{h}_1(t)\vert\leq C \left(1+\Vert B\Vert_\infty+\Vert W\Vert_\infty+\Vert B^H\Vert_\infty\right).
\end{equation}
Thus, \eqref{Jun-2-75} and \eqref{Jun-2-76} imply
\begin{equation*}
\int_0^T\vert(\tilde{h}_1(s)\vert^2+\vert\tilde{h}_2(s)\vert^2)ds\leq C\left(1+\Vert B\Vert_\infty^2+\Vert W\Vert_\infty^2+\Vert B^H\Vert_\infty^2\right)T.
\end{equation*}
Proof of Part (f): Similar to the proof of \eqref{Jun-2-75}, by  \eqref{equ-1-5}, the linear growth condition \eqref{linear} on $h_2$ and \eqref{est-m11} - \eqref{est-m22}, we can show 
\begin{equation}\label{Jun-2-77}
|\bar{h}_2(t)|\leq C(1+|x-x_0|+|y-y_0|+|x-x_0|T^{H-\frac12})
\end{equation}
and hence,
\bea\label{Jun-2-78}
\vert \hat{h}_2(t)\vert&=&c_H^{-1} t^{H-\frac{1}{2}}\left\vert\int_0^t(t-s)^{-\frac{1}{2}-H}s^{\frac{1}{2}-H}\bar{h}_2(s)ds\right\vert\nonumber\\
&\leq& C(1+|x-x_0|+|y-y_0|+|x-x_0|T^{H-\frac12}).
\eea
From \eqref{hat-1}, \eqref{Jun-2-78}, the linear growth condition \eqref{linear} on $h_1$, we have
\begin{equation}\label{Jun-2-79}
\vert \hat{h}_1(t)\vert\leq C(1+|x-x_0|+|y-y_0|+|x-x_0|T^{H-\frac12}).
\end{equation}
Thus, we can obtain
\beaa
\int_0^T(\vert \hat{h}_1(t)\vert^2+\vert \hat{h}_2(t)\vert^2)dt&\leq&C(1+|x-x_0|^2+|y-y_0|^2)T+C|x-x_0|^2T^{2H}\nonumber\\
&\leq&C(1+|x-x_0|^2+|y-y_0|^2)T^{2H}.
\eeaa
Proof of Part (g): From the Lipschitz condition on $h_i$, $i=1,2$, \eqref{til-X}, \eqref{til-Y}, \eqref{eqn:EXtxy} and \eqref{est-m11} - \eqref{est-m22}, it implies
\begin{eqnarray}\label{Jun-2-81}
&&|h_i(t,X_t,Y_t)-\bar{h}_i(t)|\nonumber\\
&\leq&C\left(|X_t-x_0|+|Y_t-y_0|+(|m_{11}(t;T)|+|m_{21}(t;T)|)|x-x_0|\right.\nonumber\\
&&\left.+(|m_{12}(t;T)|+|m_{22}(t;T)|)|y-y_0|\right)\nonumber\\
&\leq & C(\Vert \tilde{B}\Vert_{\infty}+\Vert \tilde{W}\Vert_{\infty}+\Vert \tilde{B}^H\Vert_{\infty})+C(|x-x_0|+|y-y_0|+|x-x_0|T^{H-\frac12}).\nonumber\\
\end{eqnarray}

From  \eqref{Jun-2-81}, the definitions of $\tilde{h}_2$ and $\hat{h}_2$ and \eqref{equ-1-5}, one can easily see
\bea\label{Jun-2-82}
|\tilde{h}_2(t)-\hat{h}_2(t)|&=&c_H^{-1} t^{H-\frac{1}{2}}\left\vert\int_0^t(t-s)^{-\frac{1}{2}-H}s^{\frac{1}{2}-H}(h_2(s, X_s,Y_s)-\bar{h}_2(s))ds\right\vert\nonumber\\
&\leq&C(\Vert \tilde{B}\Vert_{\infty}+\Vert \tilde{W}\Vert_{\infty}+\Vert \tilde{B}^H\Vert_{\infty})+C(|x-x_0|+|y-y_0|+|x-x_0|T^{H-\frac12}).\nonumber\\
\eea
From \eqref{h-1-2-a}, \eqref{hat-1}, \eqref{Jun-2-81} and \eqref{Jun-2-82}, one can also easily get
\bea
|\tilde{h}_1(t)-\hat{h}_1(t)|\leq C(\Vert \tilde{B}\Vert_{\infty}+\Vert \tilde{W}\Vert_{\infty}+\Vert \tilde{B}^H\Vert_{\infty})+C(|x-x_0|+|y-y_0|+|x-x_0|T^{H-\frac12}).\nonumber\\
\eea
Therefore, one can obtain
\beaa
&&\int_0^T(|\tilde{h}_1(t)-\hat{h}_1(t)|^2+|\tilde{h}_2(t)-\hat{h}_2(t)|^2)dt\nonumber\\
&\leq& C(\Vert \tilde{B}\Vert_{\infty}^2+\Vert \tilde{W}\Vert_{\infty}^2+\Vert \tilde{B}^H\Vert_{\infty}^2+|x-x_0|^2+|y-y_0|^2)T\nonumber\\
&&+C|x-x_0|^2T^{2H}\nonumber\\
&\leq&C(\Vert \tilde{B}\Vert_{\infty}^2+\Vert \tilde{W}\Vert_{\infty}^2+\Vert \tilde{B}^H\Vert_{\infty}^2+|x-x_0|^2+|y-y_0|^2)T^{2H}.
\eeaa
The proof is completed.\end{proof}

\end{lemma}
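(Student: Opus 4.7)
The plan is to establish all seven estimates in Lemma \ref{lem-7} by working from the explicit representations of $\tilde h_i$ and $\hat h_i$ through the inverse operator $\mathcal{K}_H^{-1}$ in \eqref{equ-1-3} and \eqref{equ-1-5}, combined with the Lipschitz/linear-growth hypotheses on $h_1,h_2$ and the a priori path bounds on $(X,Y)$ from Theorem \ref{holder}. The case $H\le 1/2$ for the absolute bounds (e) and (f) is essentially a rerun of the argument in Lemma \ref{lma:novikov}: plugging \eqref{linear} into \eqref{equ-1-5} and evaluating the resulting Beta integral gives a uniform bound on $|\tilde h_2(t)|$ in terms of $\Vert\tilde B\Vert_\infty+\Vert\tilde W\Vert_\infty+\Vert\tilde B^H\Vert_\infty$, and \eqref{h-1-2} produces the same for $\tilde h_1$. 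Squaring and integrating yields (e); since $T\le 1$ we may replace $T$ by $T^{2H}$ to match the stated form. For (f) one repeats the calculation with $\bar h_i(t)=h_i(t,x_t^{x,y},y_t^{x,y})$ in place of $h_i(t,X_t,Y_t)$, noting that \eqref{eqn:EXtxy}--\eqref{eqn:EYtxy} together with the uniform bounds on the coefficients $m_{ij}$ give $|\bar h_i(t)|\lesssim 1+|x-x_0|+|y-y_0|+T^{H-1/2}|x-x_0|$.

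For $H>1/2$ and the estimate (a), I would reuse the decomposition $\tilde h_2=(c_H^{-1}/\Gamma(3/2-H))(a(t)+b(t))$ from \eqref{Jan-2-14}. The bound on $a(t)$ requires both the H\"older-in-time condition \eqref{Hold} and the path H\"older regularity \eqref{Jan-2-5}--\eqref{Jan-2-6} (which transfers to the $\tP$-measure because the Girsanov drifts are bounded in $L^2$), while the bound on $b(t)$ uses the change of variables $s=tu$ that extracts the finite constant $\alpha_H$. Squaring and integrating produces the $T^{2-2H}$ scaling in (a). For (b), I would first derive uniform $T$-bounds on the coefficients in \eqref{m11}--\eqref{m22}: one checks $|m_{11}|,|m_{22}|=O(1)$, $|m_{21}|=O(1)$ when $H>1/2$, and $|m_{12}|=O(T^{1/2-H})$, together with H\"older-in-$t$ estimates $|m_{11}(t;T)-m_{11}(s;T)|\lesssim |t-s|^{1/2}T^{-1/2}$ and analogues obtained from the Cauchy-Schwarz inequalities $|\int_s^t K_H(T,u)du|\le T^H|t-s|^{1/2}$ and $|R_H(t,T)-R_H(s,T)|\le T^H|t-s|^H$ already used in Lemma \ref{wdn2}. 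Plugging these into $\bar h_2$ shows it is $\gamma$-H\"older in $t$ up to corrections controlled by $|x-x_0|T^{-1/2}$ and $|y-y_0|T^{-H}$, and applying the same $a$/$b$ decomposition to $\hat h_2$ yields (b), with the extra $|y-y_0|^2T^{3-4H}$ term tracking the $|y-y_0|T^{1/2-H}$ piece of $\bar h_2$.

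The main obstacle will be the difference estimates (c) and (g). The key move is to apply $\mathcal{K}_H^{-1}$ to the \emph{difference} $h_2(s,X_s,Y_s)-\bar h_2(s)$ as a whole before bounding, which gives $\tilde h_2-\hat h_2=(c_H^{-1}/\Gamma(3/2-H))(\bar a(t)+\bar b(t))$ where $\bar a,\bar b$ are formed from $a,b$ by substituting this difference for $h_2$. A pointwise bound on $h_i(s,X_s,Y_s)-\bar h_i(s)$ comes from \eqref{Lip} together with
\[
|X_s-x_s^{x,y}|\le \Vert\tilde B\Vert_\infty+\Vert\tilde W\Vert_\infty+(|m_{11}|+|m_{21}|)|x-x_0|+(|m_{12}|+|m_{22}|)|y-y_0|,
\]
read off from \eqref{til-X}--\eqref{til-Y} and \eqref{eqn:EXtxy}, and the analogous bound for $|Y_s-y_s^{x,y}|$. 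Substituting the $m_{ij}$ bounds then produces the $|y-y_0|^2T^{3-4H}$ correction in (c); for (g) the same computation goes through with the single-integral representation \eqref{equ-1-5}, and the $T^{2H}$ scaling follows from the same Beta integral that appears in (e)/(f).

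Everything for $\tilde h_1$, $\hat h_1$, and their difference then follows by solving the scalar identities \eqref{h-1-2} and \eqref{hat-2} for $\tilde h_1$ and $\hat h_1$, invoking linear growth of $h_1$, and feeding in the bounds already obtained on $\tilde h_2,\hat h_2$ together with the path bounds on $(X,Y)$. The only real bookkeeping subtlety is keeping track of which norm ($\Vert\cdot\Vert_\infty$ versus $\Vert\cdot\Vert_{1/2-\epsilon}$ or $\Vert\cdot\Vert_{H-\epsilon}$) appears at each step in the $H>1/2$ case, since the H\"older condition \eqref{Hold} is needed precisely to control the integral $\int_0^t(t-s)^{\gamma-H-1/2}ds$ whose convergence requires $\gamma>H-1/2$; this is the only place where the exponent restriction on $\gamma$ actively enters.
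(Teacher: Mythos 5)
Your proposal follows essentially the same route as the paper's own proof: the $a(t)+b(t)$ decomposition of $\mathcal{K}_H^{-1}$ via \eqref{equ-1-3} for $H>\frac12$ and the single-integral form \eqref{equ-1-5} with the Beta-function evaluation for $H\leq\frac12$, the uniform and H\"older-in-$t$ bounds on the $m_{ij}(t;T)$ obtained from the Cauchy--Schwarz estimates $|\int_s^tK_H(T,u)du|\leq T^H|t-s|^{\frac12}$ and $|R_H(t,T)-R_H(s,T)|\leq T^H|t-s|^H$, and, for parts (c) and (g), applying the inverse operator to the difference $h_2(s,X_s,Y_s)-\bar h_2(s)$ before estimating, with the $\tilde h_1$-level bounds recovered from the scalar relations \eqref{h-1-2} and \eqref{hat-2}. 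The proposal is correct and matches the paper's argument, including the identification of where the restriction $\gamma>H-\frac12$ enters.
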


%
%

\end{document}